\numberwithin{equation}{section}
\newcommand{\R}{{\Bbb R}}
\newcommand{\C}{{\Bbb C}}
\newcommand{\Z}{{\Bbb Z}}
 \newcommand\beq{\begin{equation}}
 \newcommand\eeq{\end{equation}}
\DeclareMathOperator{\im}{Im}
\DeclareMathOperator{\re}{Re}
\DeclareMathOperator{\arcsinh}{arcsinh}
\newcommand{\res}{\text{\upshape Res\,}}
\newcommand{\Li}{\text{\upshape Li}}
\newcommand\lb{\left(}
\newcommand\rb{\right)}
\def\XXint#1#2#3{{\setbox0=\hbox{$#1{#2#3}{\int}$}
\vcenter{\hbox{$#2#3$}}\kern-.5\wd0}}
\newtheorem{theorem}{Theorem}[section]
\newtheorem{proposition}[theorem]{Proposition}
\newtheorem{lemma}[theorem]{Lemma}
\newtheorem{definition}[theorem]{Definition}
\newtheorem{figuretext}{Figure}
\title{Semiclassical limit of a non-polynomial  \\ $q$-Askey scheme}
\author{Jonatan Lenells}
\address{Department of Mathematics, KTH Royal Institute of Technology \\ S-100 44 Stockholm, Sweden.}
\email{jlenells@kth.se}
\author{Julien Roussillon}
\address{Department of Mathematics and Systems Analysis \\ P.O. Box 11100, FI-00076, Aalto University, Finland.}
\email{julien.roussillon@aalto.fi}
\begin{document}

\begin{abstract}
We prove 
a semiclassical asymptotic formula for the two elements $\mathcal M$ and $\mathcal Q$ lying at the bottom of the recently constructed non-polynomial hyperbolic $q$-Askey scheme. We also prove that the corresponding exponent is a generating function of the canonical transformation between pairs of Darboux coordinates on the monodromy manifold of the Painlev\'e I and $\textrm{III}_3$ equations, respectively.  Such pairs of coordinates characterize the asymptotics of the tau function of the corresponding Painlev\'e equation.
We conjecture that the other members of the non-polynomial hyperbolic $q$-Askey scheme yield generating functions associated to the other Painlev\'e equations in the semiclassical limit.
\end{abstract} 

\maketitle
\noindent
{\small{\sc AMS Subject Classification (2020)}: 33D45, 33D70, 33E20, 81T40.}

\noindent
{\small{\sc Keywords}: $q$-Askey scheme,  Painlev\'e equation, generating function, semiclassical limit.}

\section{Introduction}

\subsection{Introduction}

The ($q$-)Askey scheme provides a way of classifying the classical (basic) hypergeometric orthogonal polynomials. Recently, the authors of the present article constructed a non-polynomial generalization of the $q$-Askey scheme \cite{LR24}. Any element of this scheme admits an integral representation, whose integrand is expressed in terms of Ruijsenaars' hyperbolic function \cite{R1997}, or, equivalently, in terms of Fadeev's quantum dilogarithm \cite{FK1994}, Woronowitz quantum exponential \cite{W2000}, or Kurokawa's double sine function \cite{K1991}. Moreover, it was shown in \cite{LR24} that each member of the scheme admits a certain limit towards one or two elements of the $q$-Askey scheme. 

On the other hand, Painlev\'e equations are nonlinear, second-order ordinary differential equations (ODEs) which describe monodromy preserving deformations of linear systems of second order ODEs with prescribed singularities.  The (generalized) monodromy data of such linear systems are encoded in the corresponding monodromy manifold.
Moreover, Painlev\'e equations can be formulated as non-autonomous Hamiltonian systems, whose Hamiltonian is closely related to Jimbo--Miwa--Ueno's (JMU) isomonodromic tau function \cite{JMU}. An important analytic issue is to understand the behavior of the JMU tau function when two singularities of the initial linear system collide. In the case of Painlev\'e VI, the first few terms in the asymptotic expansion of the tau function were constructed explicitly and parametrized in terms of the monodromy data by Jimbo in \cite{Jim} utilizing Riemann-Hilbert techniques. Remarkably, the Kyiv formula, conjectured by Gamayun, Iorgov, and Lisovyy in \cite{IoLT} and proved by Gavrylenko and Lisovyy in \cite{GL_PVI},  upgrades Jimbo's formula to a full asymptotic expansion. Surprisingly, the Kyiv formula admits a Fourier structure whose Fourier coefficients are the so-called four-point Virasoro conformal blocks at central charge $c=1$. In particular, the Fourier expansion is written explicitly in terms of suitable pairs of Darboux coordinates on the Painlev\'e VI monodromy manifold. More generally, it is believed that the asymptotic expansion of the tau function of any Painlev\'e equation around any critical point or direction possesses a Fourier structure (see \cite{BLMST} for many explicit examples). 

Of special importance is the fact that the JMU tau function is defined up to a multiplicative factor.  The \emph{connection constant problem} is the problem of relating ratios of such multiplicative factors corresponding to different critical points or directions. 
The only connection constant problems for generic Painlev\'e tau functions that have been solved rigorously are associated with Painlev\'e VI \cite{ILP} (first conjectured in \cite{IoLT}), Painlev\'e III$_3$ \cite{IP} (first conjectured in \cite{ILT}), and Painlev\'e I \cite{LisoRou}. In all these cases, the connection constant is closely related to the generating function of the canonical transformation between two pairs of Darboux coordinates that characterize the asymptotic expansions of the tau function. 

In this article, we study the relationship between the semiclassical limit of the non-polynomial $q$-Askey scheme of \cite{LR24} and Painlev\'e equations. We consider the two elements $\mathcal M(b,x,y)$ and $\mathcal Q(b,x,y)$ at the bottom level of the scheme in the semiclassical limit. This means that we study the behavior of $\mathcal{M}(b,x/b,y/b)$ and $\mathcal{Q}(b,x/b,y/b)$ as $b$ tends to $0$. The limit $b \to 0$ is called the semiclassical limit because the parameter $b$ characterizes two quantum deformation parameters $q=e^{i\pi b^2}$ and $\tilde q = e^{i\pi b^{-2}}$. We prove that $\phi \in \{\mathcal M, \mathcal Q\}$ satisfies
\begin{align}\label{phiasymptotics}
& \phi \bigg(b,\frac{x}b,\frac{y}b\bigg) =  g_\phi(x,y) e^{\frac{f_\phi(x,y)}{b^2}} \lb 1 + O (b) \rb \qquad \text{as} \; b \to 0^+,
\end{align}
where $g_\phi$ and $f_\phi$ are explicit functions. The proof of (\ref{phiasymptotics}) is based on a rigorous saddle point analysis of the associated integral representation. Moreover, we prove a simple formula which relates $f_{\mathcal M}$ (resp. $f_{\mathcal Q}$) to the generating function of the canonical transformation between two suitable pairs of Darboux coordinates on the Painlev\'e $\textrm{I}$ (resp. Painlev\'e $\textrm{III}_3$) monodromy manifold. 

We initially derived the relation to Painlev\'e generating functions in an informal manner as follows. On the one hand, it was proved in \cite{LR24} that the functions $\mathcal M(b,x,y)$ and $\mathcal Q(b,x,y)$ satisfy two pairs of difference equations in the variables $x$ and $y$. On the other hand, by definition the generating functions of interest satisfy a pair of differential equations. We observed that the semiclassical limit of the difference equations satisfied by $\mathcal M$ and $\mathcal Q$ formally leads to the differential equations satisfied by the corresponding generating functions (see Appendix \ref{appendix} for more details).

More generally, we conjecture that the semiclassical limit of the other elements of the scheme are related to generating functions for Painlev\'e equations according to the diagrams of Figure \ref{nonpolynomialscheme}.

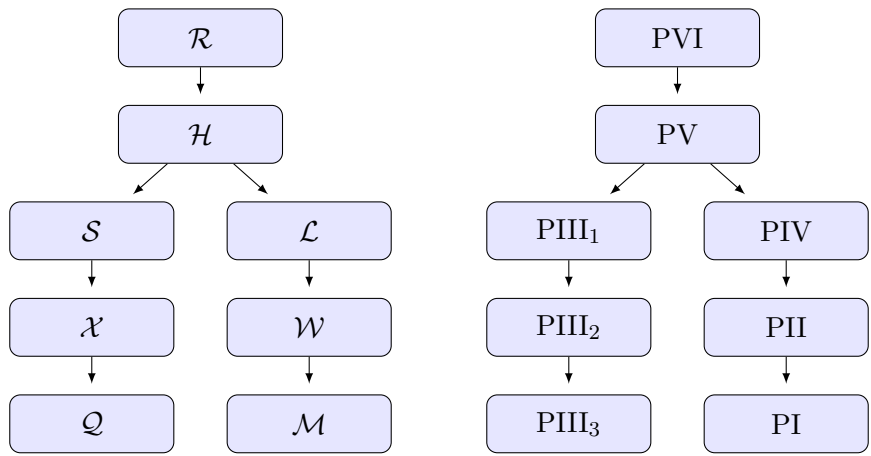
\begin{figure}[h!]
\tikzstyle{block} = [rectangle, draw, fill=blue!10, 
    text width=5em, text centered, rounded corners, minimum height=2em]
\tikzstyle{line} = [draw, -latex, shorten >= 4pt, shorten <= 0pt]
   \centering
\begin{tikzpicture}[node distance = .1cm, auto,scale=0.1]
    \node [block] (AW) {$\mathcal{R}$};
    \node [block, below =0.5cm of AW] (HahnJacobi) {$\mathcal{H}$};
\node [block, below left=.5cm and -.75cm  of HahnJacobi] (SalamLittleJacobi) {$\mathcal{S}$};
\node [block, below right=.5cm and -.75cm  of HahnJacobi] (BigLaguerre) {$\mathcal{L}$};
\node [block, below =.5cm of BigLaguerre] (LittleLaguerre) {$\mathcal{W}$};
\node [block, below=.5cm of LittleLaguerre] (M) {$\mathcal{M}$};
\node [block, below =.5cm of SalamLittleJacobi] (ContinuousBigHermite) {$\mathcal{X}$};
\node [block, below =.5cm of ContinuousBigHermite] (BigHermite) {$\mathcal{Q}$};

    \path [line] (AW) -- (HahnJacobi);
    \path [line] (HahnJacobi) -- (SalamLittleJacobi);
    \path [line] (HahnJacobi) -- (BigLaguerre);
    \path [line] (BigLaguerre) -- (LittleLaguerre);
     \path [line] (SalamLittleJacobi) -- (ContinuousBigHermite);
     \path [line] (ContinuousBigHermite) -- (BigHermite);
     \path [line] (LittleLaguerre) -- (M);

\end{tikzpicture}
\hspace{1cm}
\begin{tikzpicture}[node distance = .1cm, auto,scale=0.1]
    \node [block] (AW) {PVI};
    \node [block, below =0.5cm of AW] (HahnJacobi) {PV};
\node [block, below left=.5cm and -.75cm  of HahnJacobi] (SalamLittleJacobi) {PIII$_1$};
\node [block, below right=.5cm and -.75cm  of HahnJacobi] (BigLaguerre) {PIV};
\node [block, below =.5cm of BigLaguerre] (LittleLaguerre) {PII};
\node [block, below=.5cm of LittleLaguerre] (M) {PI};
\node [block, below =.5cm of SalamLittleJacobi] (ContinuousBigHermite) {PIII$_2$};
\node [block, below =.5cm of ContinuousBigHermite] (BigHermite) {PIII$_3$};

    \path [line] (AW) -- (HahnJacobi);
    \path [line] (HahnJacobi) -- (SalamLittleJacobi);
    \path [line] (HahnJacobi) -- (BigLaguerre);
    \path [line] (BigLaguerre) -- (LittleLaguerre);
     \path [line] (SalamLittleJacobi) -- (ContinuousBigHermite);
     \path [line] (ContinuousBigHermite) -- (BigHermite);
     \path [line] (LittleLaguerre) -- (M);

\end{tikzpicture}
\caption{The elements of the non-polynomial $q$-Askey scheme and the corresponding Painlev\'e equations} 
\label{nonpolynomialscheme}
\end{figure}

\subsection{Relations to earlier works}

We emphasize that the conjecture above is certainly true in the case of the top element $\mathcal R$ of the scheme, due to earlier results of Teschner and Vartanov in \cite{TV}. In fact, their result motivated us to investigate the semiclassical limit of the non-polynomial $q$-Askey scheme in the first place. More precisely, the connection constant for the Painlev\'e VI tau function is closely related to a certain generating function which, interestingly, coincides with the volume of a hyperbolic tetrahedron \cite{IoLT}. Teschner and Vartanov showed in \cite{TV} at a theoretical physics level of rigor that the Virasoro fusion kernel, which is a renormalized version of the element $\mathcal R$ \cite{R2021}, reduces to such a generating function in the semiclassical limit. 

\subsection{Outlook}

As mentioned above, only three connection constants for Painlev\'e tau functions have been rigorously obtained in the literature. The case of Painlev\'e V was conjectured in \cite{LNR}, and we expect it to be closely related to the semiclassical limit of the function $\mathcal H$. To our knowledge, apart from a special case of Painlev\'e II studied in \cite{ILP}, the connection constants for the other Painlev\'e equations remain unknown. According to the conjecture above, we believe that the semiclassical limit of the non-polynomial $q$-Askey scheme can help determine these constants.

Moreover, the results of the present paper naturally suggest that elements of the non-polynomial $q$-Askey scheme play an important role in the quantization of Painlev\'e monodromy manifolds. In this context, it is worth noting that Mazzocco discovered a profound connection between the $q$-Askey scheme and Painlev\'e equations in \cite{Mazz16}. Specifically, each Painlev\'e monodromy manifold admits a Poisson bracket that can be quantized. The resulting quantum algebras have a representation on the space of polynomials, and the eigenspaces in this representation are spanned by elements of the $q$-Askey scheme. On the other hand, the authors of the present paper studied in \cite{LR24} the relation between the non-polynomial $q$-Askey scheme and the $q$-Askey scheme. One key feature is that there exists a limit in which the two pairs of difference equations satisfied by elements of the non-polynomial scheme reduce to one difference and one three-term recurrence relation satisfied by elements of the $q$-Askey scheme. It would be interesting to understand how Mazzocco's discovery generalizes to the non-polynomial case. 

Let us finally mention geometric results of Teschner \cite{T1,T2} regarding the top element $\mathcal R$ of the non-polynomial $q$-Askey scheme and the quantization of Teichmüller spaces.

\subsection{Organization of the paper}

The paper is organized as follows. In Section \ref{section2}, we state the main results concerning the semiclassical limits of $\mathcal M$ and $\mathcal Q$ and their relation to the generating functions associated to Painlev\'e I and Painlev\'e III$_3$, respectively. In Section \ref{section3}, we derive an asymptotic formula for Ruijsenaars' hyperbolic gamma function which is important for the proofs. In Sections \ref{section4} and \ref{section5}, we prove the main results stated in Section \ref{section2} by a rigorous saddle point analysis. Finally, in Appendix \ref{appendix} we formally describe the semiclassical limit of the difference equations satisfied by the function $\mathcal M$ which give rise to differential equations satisfied by the Painlev\'e I generating function.

\subsection{Notation}
Unless stated otherwise, the principal branch will be used for all complex powers, all logarithms, and all dilogarithms. More precisely, this means that $z^\alpha := e^{\alpha \ln z}$ where the logarithm $\ln z$ satisfies $\im \ln z \in (-\pi, \pi]$ and that the dilogarithm $\Li_2(z)$ is the analytic function of $z \in \C \setminus [1, +\infty)$ defined by
\begin{align}\label{dilogdef}
\Li_2(z) = -\int_0^{z} \frac{\ln(1-u)}{u} du \qquad \text{for $z \in \C \setminus [1,+\infty)$},
\end{align}
where the curve of integration is any contour from $0$ to $z$ avoiding the branch cut $[1,+\infty)$.
Unless stated otherwise, we assume that $b$ is a strictly positive real number.
The notation $b \to 0^+$ indicates the limit as $b \in \R$ approaches zero from above.

\subsection*{Acknowledgements}
J.L. was supported by the Swedish Research Council, Grant No. 2021-03877.
J.R. was supported by the Academy of Finland Centre of Excellence Programme, Grant No. 346315 entitled ``Finnish centre of excellence in Randomness and STructures (FiRST)''.

\section{Main results} \label{section2}

\subsection{Definition of the hyperbolic gamma function $s_b(z)$}
The elements $\mathcal M$ and $\mathcal Q$ of the non-polynomial scheme admit contour integral representations, whose integrands involve the function $s_b(z)$ defined by
\begin{equation}\label{defsb}
s_b(z)=\operatorname{exp}{\left[  i \int_0^\infty \left(\frac{\operatorname{sin}{2yz}}{2\operatorname{sinh}(b^{-1}y)\operatorname{sinh}(by)}-\frac{z}{y}\right)  \frac{dy}{y} \right]}, \qquad |\im z|<\frac{Q}{2},
\end{equation}
where $Q := b + b^{-1}$. The function $s_b(z)$ is related to Ruijsenaars' hyperbolic gamma function $G$ in \cite{R1997} by
\begin{align}\label{sbgbGE}
s_b(z) = G(b, b^{-1}; z).
\end{align}
It follows from \eqref{sbgbGE} and the results in \cite{R1997} that $s_b(z)$ is a meromorphic function of $z \in \mathbb{C}$ with zeros $\{z_{m,l}\}_{m,l=0}^\infty$ and poles $\{p_{m,l}\}_{m,l=0}^\infty$ located at 
\begin{equation}\label{polesb}
\begin{split}
&z_{m,l}=\frac{i Q}{2}+i m b +il b^{-1}, \qquad m,l = 0,1, 2,\dots, \qquad (\text{zeros}), 
	\\
&p_{m,l}=-\frac{i Q}{2} -i m b-il b^{-1}, \qquad m,l = 0,1, 2,\dots, \qquad (\text{poles}).
\end{split}
\end{equation}
The multiplicity of the zero $z_{m,l}$ in (\ref{polesb}) is given by the number of distinct pairs $(m_i,l_i) \in \mathbb{Z}_{\geq 0} \times \mathbb{Z}_{\geq 0}$ such that $z_{m_i,l_i}=z_{m,l}$. 
The pole $p_{m,l}$ has the same multiplicity as the zero $z_{m,l}$.


\subsection{The function $\mathcal  M$ and the Painlev\'e $\textrm I$ generating function}
Let $\Omega_{\mathcal M}$ be the domain
\beq \label{DMdef}
\Omega_\mathcal{M} := \{(\zeta, \omega) \in \mathbb C^2 \,|\, \im \omega<Q/2,\; \im(\zeta+\omega) >0 \} \setminus (\Delta_\zeta \times \mathbb C),
\eeq
where
$$\Delta_{\zeta} := \{\tfrac{iQ}2+ibm+ilb^{-1}\}_{m,l=0}^\infty \cup \{-\tfrac{iQ}2-imb-ilb^{-1} \}_{m,l=0}^\infty.$$
The function $\mathcal M$ is an analytic function of $(\zeta,\omega) \in \Omega_\mathcal{M}$ given by \cite[Eq. (10.5)]{LR24}
\beq \label{defM}
\mathcal{M}(b,\zeta,\omega) = P_\mathcal{M}(\zeta) \displaystyle \int_{\mathsf{M}} I_\mathcal{M}(x,\zeta,\omega) dx \qquad \text{for $(\zeta,\omega) \in \Omega_\mathcal{M}$},
\eeq
where
\begin{align}
P_\mathcal{M}(\zeta) = s_b(\zeta ),  \qquad
 I_\mathcal{M}(x,\zeta,\omega) = e^{i \pi  x \left(\zeta -\frac{i Q}{2}+2 \omega \right)} \frac{s_b(x-\zeta ) }{s_b\big(x+\frac{i Q}{2}\big)},
\end{align}
and the contour $\mathsf{M}$ is any curve from $-\infty$ to $+\infty$ which separates the increasing from the decreasing sequence of poles. If $\zeta \in \mathbb R$ and $\im \omega \in (0,Q/2)$, then the contour $\mathsf{M}$ can be any curve from $-\infty$ to $+\infty$ lying within the strip $\im x \in (-Q/2,0)$.

We also introduce the set of differential equations characterizing the Painlev\'e I generating function.

\begin{definition} \label{defgeneratingPI}
A solution $Y(\zeta,\omega)$ of the differential equations
\begin{align}
\label{diffeqWzeta} & \frac{\partial Y(\zeta,\omega)}{\partial \zeta} =  \ln\lb 1-e^{2i\pi(\zeta + \omega)}\rb, \\
\label{diffeqWomega} &  \frac{\partial Y(\zeta,\omega)}{\partial \omega} = \ln \lb \frac{1-e^{2i\pi(\zeta+\omega)}}{1 - e^{2i\pi \omega}} \rb,
\end{align}
is called a Painlev\'e I generating function.
\end{definition}

Since the differential form 
$$\ln\lb 1-e^{2i\pi(\zeta + \omega)}\rb d\zeta + \ln \lb \frac{1-e^{2i\pi(\zeta+\omega)}}{1 - e^{2i\pi \omega}} \rb d\omega$$
is closed, the solution $Y(\zeta,\omega)$ of (\ref{diffeqWzeta})--(\ref{diffeqWomega}) exists and is unique up to a constant (in spite of this mild non-uniqueness, we often speak of {\it the} generating function). 

We are now ready to state our first main result, which describes the semiclassical behavior of the function $\mathcal M$ and its relation to the Painlev\'e I generating function.

\begin{theorem}[Semiclassical limit of $\mathcal{M}$]\label{Mth}
For any $\zeta \in \R$ and any $\omega \in \C$ such that $0 < \im \omega < 1/2$, it holds that
\begin{align}\label{Mfinalasymptotics}
\mathcal{M}\bigg(b, \frac{\zeta}{b}, \frac{\omega}{b}\bigg) 
= \sqrt{\frac{\coth (\pi  (\zeta +\omega ))+1}{2}} e^{\frac{f_{\mathcal{M}}(\zeta, \omega)}{b^2}} \big(1 + O(b)\big) \qquad \text{as $b \to 0^+$},
\end{align}
where the function $f_{\mathcal{M}}(\zeta, \omega)$ is defined by
\begin{align}\nonumber
f_{\mathcal{M}}(\zeta, \omega)
=& -\frac{ \Li_2\left(-e^{2 \pi  \zeta }\right) + \Li_2\left(-e^{-\pi  \zeta} \frac{ \cosh (\pi \omega)}{\sinh(\pi  (\zeta +\omega ))}\right) - \Li_2\left(e^{\pi  \zeta } \frac{ \cosh (\pi \omega)}{\sinh(\pi  (\zeta +\omega ))}\right)}{2 \pi i} 
	\\ \label{finthmM} 
& +i \pi  \zeta ^2+ i\bigg(\omega - \frac{i}{2}\bigg)  \ln \left(e^{\pi  \zeta }\frac{ \cosh (\pi \omega)}{\sinh(\pi  (\zeta +\omega ))}\right) +\frac{i \pi }{6}
\end{align}
and the principal branch is used for all functions. Moreover, let $Y(\zeta,\omega)$ be the function defined for $\zeta \in -\frac{1}{2} + i\R$ and $\omega$ in the strip $\re \omega \in (1/2, 1)$ by
\begin{equation} \label{identificationPIgenfunc}
Y(\zeta,\omega) = f_{\mathcal{M}}\bigg( i\zeta + \frac{i}2,i\omega-\frac{i}2 \bigg) + 2i\pi\zeta\omega+i\pi \zeta(\zeta-1)
\end{equation}
and extended to other values of $(\zeta, \omega)$ by analytic continuation. Then $Y(\zeta,\omega)$ is a Painlev\'e I generating function.
\end{theorem}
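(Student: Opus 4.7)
The plan is to split the proof into two parts: (i) a rigorous saddle-point analysis of the integral representation (\ref{defM}) establishing (\ref{Mfinalasymptotics}), and (ii) a direct verification that the function $Y$ defined in (\ref{identificationPIgenfunc}) satisfies (\ref{diffeqWzeta})--(\ref{diffeqWomega}).

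For part (i), first rescale $x \mapsto x/b$ in (\ref{defM}). Since $Q = b + b^{-1}$, the exponential factor in $I_\mathcal{M}$ evaluated at $(\zeta/b, \omega/b)$ becomes
\beq
\exp\bigg\{\frac{i\pi x(\zeta + 2\omega - \tfrac{i}{2})}{b^2} + \frac{\pi x}{2}\bigg\},
\eeq
while the ratio $s_b((x-\zeta)/b)/s_b(x/b + iQ/2)$ together with the prefactor $s_b(\zeta/b)$ is analyzed via the semiclassical asymptotic of $s_b(z/b)$ derived in Section \ref{section3}. Schematically this reads $\ln s_b(z/b) = \frac{1}{2\pi i b^2}\Li_2(-e^{2\pi z}) + \text{(subleading)} + O(b)$ uniformly on compacts of a suitable strip, so the integrand takes the form $h(x,\zeta,\omega) e^{\Phi(x,\zeta,\omega)/b^2}(1+O(b))$ for an explicit dilogarithmic action $\Phi$ and a slowly varying prefactor $h$.

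Next, locate the relevant saddle point $x_0(\zeta,\omega)$ by solving $\partial_x \Phi = 0$. Since $\partial_z \Li_2(-e^{2\pi z}) = -\ln(1 + e^{2\pi z})$, the saddle equation becomes algebraic in $e^{2\pi x}$ and admits an explicit closed-form solution. Substituting back into $\Phi$ and simplifying via five-term/Abel-type dilogarithm identities should produce the expression (\ref{finthmM}) for $f_{\mathcal{M}}$, while the Gaussian fluctuations $\int e^{\Phi''(x_0)(x-x_0)^2/(2b^2)} dx \sim \sqrt{2\pi b^2/(-\Phi''(x_0))}$, combined with the value of $h$ at the saddle, yield the prefactor $\sqrt{(\coth(\pi(\zeta+\omega))+1)/2}$. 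The contour $\mathsf{M}$ is then deformed into the steepest-descent path through $x_0$; using (\ref{polesb}) one checks that for $\zeta \in \R$ and $0 < \im \omega < 1/2$ no poles are crossed, and a standard Laplace-type argument with the uniform error bounds from Section \ref{section3} produces (\ref{Mfinalasymptotics}).

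For part (ii), compute $\partial_\zeta Y$ and $\partial_\omega Y$ directly from (\ref{finthmM}), using $\partial_z \Li_2(-e^{2\pi z}) = -\ln(1 + e^{2\pi z})$ and the general chain rule $\partial_z \Li_2(F(z)) = -F^{-1}\ln(1-F)F'$. After incorporating the factor of $i$ from $\zeta \mapsto i\zeta + i/2$, $\omega \mapsto i\omega - i/2$ and the polynomial correction $2 i \pi \zeta \omega + i \pi \zeta(\zeta - 1)$, the contributions of the three dilogarithms in (\ref{finthmM}) cancel against the $\ln \lb e^{\pi \zeta} \cosh(\pi \omega)/\sinh(\pi(\zeta+\omega)) \rb$ term and its coefficient, leaving exactly the right-hand sides of (\ref{diffeqWzeta}) and (\ref{diffeqWomega}). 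The main obstacle is part (i): the $s_b$ asymptotic has to be controlled uniformly in $x$ along the deformed contour, the steepest-descent path must be verified to avoid every element of the intricate pole pattern (\ref{polesb}), and the algebraic simplification of $\Phi(x_0, \zeta, \omega)$ into the three-dilogarithm form (\ref{finthmM}) requires identifying and applying the correct non-trivial dilogarithm identities.
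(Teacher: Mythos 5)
Your outline follows the paper's proof structure closely: rescale $x = t/b$, invoke the uniform semiclassical expansion of $\ln s_b(z/b)$ from Proposition \ref{sbprop}, locate the saddle $t_0$ of the dilogarithmic action $f$ using $\partial_z \Li_2(-e^{2\pi z}) = -\ln(1+e^{2\pi z})$, perform a rigorous steepest-descent/Laplace estimate, and for the Painlev\'e I statement verify \eqref{diffeqWzeta}--\eqref{diffeqWomega} by direct differentiation of \eqref{finthmM}; this is exactly Section \ref{section4}.

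Two imprecisions in the sketch would waste time if you executed them literally. First, no five-term or Abel-type dilogarithm identity is needed to pass from the saddle-point value to \eqref{finthmM}: since $e^{2\pi t_0} = e^{\pi\zeta}\cosh(\pi\omega)/\sinh(\pi(\zeta+\omega))$, one has $e^{2\pi(t_0-\zeta)} = e^{-\pi\zeta}\cosh(\pi\omega)/\sinh(\pi(\zeta+\omega))$ and $e^{2\pi(t_0+i/2)} = -e^{2\pi t_0}$, so the three dilogarithms in $f(t_0)$ become the three in \eqref{finthmM} by sheer substitution, and $2i\pi t_0\omega + \pi t_0 = i(\omega - i/2)\cdot 2\pi t_0$ produces the remaining term; no identity is invoked. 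Second, after the $s_b$ asymptotics have been applied, the object whose contour you deform is $\int e^{f(t)/b^2} g(t)\,dt$ with $f,g$ analytic on the strip $-1/2 < \im t < 0$, so the steepest-descent deformation is constrained by the concavity of $\re f$ (via $\re f'' < 0$) and the decay at $\pm\infty$, not by the $s_b$-pole lattice \eqref{polesb}; the latter only enters in choosing the initial $b$-independent contour $b\mathsf{M}$ inside that strip so that Proposition \ref{sbprop}'s uniform estimate applies, together with the careful error control of Lemma \ref{Elemma}.
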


The proof of Theorem \ref{Mth} is presented in Section \ref{section4}.  The proof of \eqref{Mfinalasymptotics} consists of a rigorous saddle point analysis, whereas the proof that \eqref{identificationPIgenfunc} is a Painlev\'e I generating function consists of a direct verification of the differential equations \eqref{diffeqWzeta} and \eqref{diffeqWomega}.

We now discuss the origin of Definition \ref{defgeneratingPI}. The standard form of Painlev\'e I reads
\begin{equation} \label{PIeq}
\frac{d^2 q(t)}{dt^2 }= 6 q^2 + t.
\end{equation}
This equation can be written in the Hamiltonian form $\frac{dq(t)}{dt} = \frac{\partial H}{\partial p}$, $\frac{dp(t)}{dt} = - \frac{\partial H}{\partial q}$ with the Hamiltonian 
\begin{equation}
H = \frac{p^2}2 - 2q^3 - tq.
\end{equation}
The Painlev\'e I tau function is then defined up to a factor independent of $t$ as
\begin{equation}
\frac{d \ln \tau}{d t} = H.
\end{equation}
Kapaev studied in \cite{Kap1,Kap2} the asymptotics of $q(t)$ as $|t| \to \infty$ along directions $\mathcal R_k = \{\text{arg}\; t = \pi - \frac{2k\pi}5\}$ with $k \in \mathbb Z/5\mathbb Z$. Translated in terms of the tau function, his results imply that the leading asymptotics of $\tau(t)$ as $t \in \mathcal R_k$ tends to infinity is
\begin{align} \label{asytau}
\tau(t) =  \mathcal C_k \; 48^{-\frac{\nu_k^2}2} (2\pi)^{-\frac{\nu_k}2} e^{-\frac{i\pi \nu_k^2}4} G(1+\nu_k) \; x^{-\frac{\nu_k^2}2-\frac{1}{60}} e^{\frac{x^2}{45} + \frac45 i \nu_k x} (1+o(1)), 
\end{align}
where $x=24^{\frac14} \big( e^{\frac{2i\pi k}5 - i\pi}t \big)^\frac54$.  The parameters $\nu_k$, $k \in \mathbb Z/5\mathbb Z$ are related to the monodromy data of the linear system associated with Painlev\'e I. In fact, the space of generic monodromy data $\mathcal S$ is two-dimensional because of the following cyclic relations:
\begin{equation} \label{cyclicrelation}
e^{2i\pi \nu_{k-1}} e^{2i\pi \nu_{k+1}} = 1 - e^{2i\pi \nu_k}, \qquad k \in \mathbb Z/5\mathbb Z.
\end{equation}
Thus, any two of the five parameters $\nu_1,\dots,\nu_5$ provides a set of local coordinates on $\mathcal S$ and may be thought of as Painlev\'e I integration constants. Moreover,  the choice of a pair of parameters fixes all subleading corrections in \eqref{asytau}. It turns out that the pair $(\nu_k,\nu_{k+1})$ is the most convenient 
one to parametrize the asymptotics of the tau function as $|t| \to \infty$ along the ray $\mathcal R_k$. For more details, the reader is referred to \cite{LisoRou}. 

Of special importance is the constant factor $\mathcal C_k$ in \eqref{asytau}. It is not uniquely defined, since the tau function is defined only via a logarithmic derivative. However, the ratio of two constant factors $\Upsilon_{k,k'} = \mathcal C_{k'}/ \mathcal C_k$ is a well-defined function of the monodromy. It was shown in \cite{LisoRou} that $\ln(\Upsilon_{k,k'})$ coincides with the generating function of the canonical transformation between the two pairs of coordinates $(\nu_k,\nu_{k+1})$ and $(\nu_{k'},\nu_{k'+1})$. 

The generating function which leads to Definition \ref{defgeneratingPI} is $\ln(\Upsilon_{k-1,k+2})$.  Its explicit formula can be found utilizing \cite[Theorem 3.6]{LisoRou}. By definition, it satisfies the differential equations
\begin{align*}
& \frac{d\ln(\Upsilon_{k-1,k+2})}{d \nu_k} = 2i\pi \nu_{k-1}, \qquad \frac{d\ln(\Upsilon_{k-1,k+2})}{d \nu_{k+3}} = - 2i\pi \nu_{k+2}.
\end{align*}
Moreover, the cyclic relation \eqref{cyclicrelation} implies that
\begin{align} 
 e^{2i\pi \nu_{k-1}} = 1-e^{2i\pi (\nu_k+\nu_{k+3})}, \qquad e^{2i\pi \nu_{k+2}} = \frac{1-e^{2i\pi \nu_{k+3}}}{1-e^{2i\pi (\nu_k+\nu_{k+3})}}.
\end{align}
Assuming that $\arg(2i\pi \nu_{k-1})$ and $\arg(2i\pi \nu_{k+2})$ lie in the range $(-\pi,\pi)$ and using the principal branch for the logarithm, the differential equations can be recast in terms of $\nu_k$ and $\nu_{k+3}$ only as
\begin{align*}
 \frac{d\ln(\Upsilon_{k-1,k+2})}{d \nu_k} = \ln \lb 1-e^{2i\pi (\nu_k+\nu_{k+3})} \rb, \quad \frac{d\ln(\Upsilon_{k-1,k+2})}{d \nu_{k+3}} = \ln\bigg(\frac{1-e^{2i\pi (\nu_k+\nu_{k+3})}}{1-e^{2i\pi \nu_{k+3}}}\bigg).
\end{align*}
These equations are equivalent to the ones of Definition \ref{defgeneratingPI}.

\subsection{The function $\mathcal Q$ and the Painlev\'e $\textrm{III}_3$ generating function}
Our second main result concerns the semiclassical asymptotics of the function $\mathcal Q$ and its relation to the Painlev\'e III$_3$ generating function.

We first recall the definition of $\mathcal Q$ from \cite{LR24}. Let $\Omega_\mathcal{Q}$ be the domain \beq
\Omega_\mathcal{Q} := \mathbb C \times \big(\{\mu \in \C \, | \, \im \mu <Q/2 \} \backslash \Delta_\mu\big),
\eeq
where
\beq \label{Detadef}
\Delta_\mu := \{-\tfrac{iQ}2 -imb - ilb^{-1} \}_{m,l=0}^\infty.
\eeq
The function $\mathcal{Q}$ admits the following integral representation \cite[Eq. (7.6)]{LR24}:
\beq\label{defQ}
\mathcal{Q}(b,\sigma_s,\mu) = P_\mathcal{Q}(\sigma_s,\mu) \displaystyle \int_{\mathsf{Q}} I_\mathcal{Q}(x,\sigma_s,\mu) dx \qquad \text{for $(\sigma_s,\mu) \in \Omega_\mathcal{Q}$},
\eeq
where 
\begin{align}
\label{PQ} & P_\mathcal{Q}(\sigma_s,\mu) = e^{i \pi  (\frac{1}{6}+\frac{7 Q^2}{24}-\frac{\mu^2}{2}+i \mu  Q- \sigma_s^2)} s_b(\mu), \\
\label{IQ} & I_\mathcal{Q}(x,\sigma_s,\mu) = e^{-i\pi x^2} e^{2i\pi x(\mu-\frac{iQ}2)} s_b(x+\sigma_s) s_b(x-\sigma_s),
\end{align}
and the integration contour $\mathsf{Q}$ is any curve from $-\infty$ to $+\infty$ passing above the points $x= \pm \sigma_s - iQ/2$ such that its right tail satisfies
\beq
\im x + \tfrac{Q}4 - \tfrac{ \im \mu}2 < -\delta \qquad
\text{for all $x \in \mathsf{Q}$ with $\re x$ sufficiently large},
\eeq
for some $\delta>0$.  If $(\sigma_s,\mu) \in \mathbb R^2$, then $\mathsf{Q}$ can be any curve from $-\infty$ to $+\infty$ lying within the strip $\im x \in (-Q/2,-Q/4-\delta)$.

We next define the Painlev\'e III$_3$ generating function, first introduced in \cite{ILT}.

\begin{definition} \label{defgeneratingfuncPIII3}
The Painlev\'e III$_3$ generating function $W(\sigma,\nu)$ is defined by
\begin{equation} \label{defW}
8\pi^2 W(\sigma,\nu) = \Li_2 \lb - e^{2i\pi(\sigma+\eta-\frac{i \nu}2)}\rb  + \Li_2 \lb - e^{-2i\pi(\sigma+\eta+\frac{i \nu}2)}\rb - (2\pi \eta)^2 + (\pi \nu)^2,
\end{equation}
where $\eta = \eta(\sigma, \nu)$ is given by
\begin{align}\label{etasigmanudef}
\eta(\sigma, \nu) = \frac{1}{2\pi} \arcsin(e^{\pi \nu} \sin(2\pi \sigma))
\end{align}
and the branches are fixed as follows: the principal branch is used for the functions $\Li_2(\cdot)$ and $\arcsin(\cdot)$ when $0<\eta<\sigma<\frac14$ and $\nu \in \mathbb R_{<0}$, and the right-hand side of (\ref{defW}) is extended to other values of $(\sigma, \nu)$ by analytic continuation.
\end{definition}

The following theorem is our second main result.

\begin{theorem}[Semiclassical limit of $\mathcal{Q}$]\label{Qth}
For any $(\sigma_s,\mu) \in \R^2$ such that $e^{2 \pi  \mu} > \sinh^2(2 \pi \sigma_s)$, it holds that
\begin{align}\label{Qfinalasymptotics}
\mathcal{Q}\bigg(b, \frac{\sigma_s}{b}, \frac{\mu}{b}\bigg) 
= \frac{e^{\frac{\pi i}{4}}}{\sqrt{2} (e^{2 \pi  \mu }-\sinh^2(2 \pi  \sigma_s))^{1/4}} e^{\frac{f_{\mathcal{Q}}(\sigma_s, \mu)}{b^2}} \big(1 + O(b)\big) \qquad \text{as $b \to 0^+$},
\end{align}
where the function $f_{\mathcal{Q}}(\sigma_s, \mu)$ is defined by
\begin{align} \nonumber
f_{\mathcal{Q}}(\sigma_s, \mu)
=& -\frac{\Li_2\big(-e^{2 \pi  \mu }\big)}{2 \pi i}
 -\frac{\Li_2\big(e^{-2 \pi 
   \sigma_s} \big(\cosh (2 \pi  \sigma_s)+i \sqrt{e^{2 \pi  \mu }-\sinh^2(2 \pi  \sigma_s)}\big)\big)}{2 \pi i}
   	\\ \label{finQ}
 & -\frac{\Li_2\big(e^{2 \pi  \sigma_s}
   \big(\cosh (2 \pi  \sigma_s)+i \sqrt{e^{2 \pi  \mu }-\sinh^2(2 \pi  \sigma_s)}\big)\big)}{2 \pi i}
   	\\ 
\nonumber & + \bigg(\frac{1}{2}+i \mu \bigg) \ln \Big(-\cosh (2 \pi  \sigma_s)-i \sqrt{e^{2 \pi  \mu }-\sinh^2(2 \pi  \sigma_s)}\Big) -\pi \mu +\frac{5 \pi i}{12}
\end{align}
and the principal branch is used for all functions.
Moreover, the exponent $f$ is related to the Painlev\'e III$_3$ generating function as follows:
\begin{equation}\label{fWrelationexpected}
f_{\mathcal{Q}}(\sigma_s, \mu) = 4i\pi W\bigg(i\sigma_s,-\mu - \frac{i}{2}\bigg) 
 - \frac{\Li_2(-e^{2\pi \mu})}{2\pi i}
  - 2i\pi \sigma_s^2 + \frac{i\pi}4
\end{equation}
for any $(\sigma_s,\mu) \in \R^2$ such that $e^{2 \pi  \mu} > \sinh^2(2 \pi \sigma_s)$.
\end{theorem}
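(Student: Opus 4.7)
The plan is to mirror the strategy used for Theorem \ref{Mth}: establish (\ref{Qfinalasymptotics}) by a rigorous saddle-point analysis of the integral representation (\ref{defQ}), and verify the identity (\ref{fWrelationexpected}) by direct algebraic computation. After the rescalings $\sigma_s \mapsto \sigma_s/b$, $\mu \mapsto \mu/b$, $x \mapsto x/b$, the asymptotic expansion of $s_b(\cdot/b)$ from Section \ref{section3} converts the integrand of (\ref{defQ}) into the form $e^{F_b(x)/b^2}(1 + O(b))$, where the leading exponent $F_0(x) = \lim_{b \to 0^+} F_b(x)$ is a combination of two dilogarithms of the form $\Li_2(-e^{2\pi(x\pm\sigma_s)})$ together with a quadratic polynomial in $x$ arising from the exponential factors in (\ref{PQ})--(\ref{IQ}).

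Using $\frac{d}{dz}\Li_2(-e^{2\pi z}) = -2\pi\ln(1+e^{2\pi z})$, the saddle-point equation $F_0'(x) = 0$ reduces to a polynomial equation in $u = e^{2\pi x}$ whose relevant root is
\begin{align*}
u_* = e^{2\pi x_*} = \cosh(2\pi\sigma_s) + i\sqrt{e^{2\pi\mu} - \sinh^2(2\pi\sigma_s)},
\end{align*}
which is exactly the expression that appears inside the dilogarithms and the logarithm of (\ref{finQ}). The hypothesis $e^{2\pi\mu} > \sinh^2(2\pi\sigma_s)$ ensures that the radicand is positive, so that $\re x_*$ is real. I would then deform the original contour $\mathsf{Q}$ to pass through $x_*$ along its steepest-descent direction, verifying that (i) no zeros or poles of $s_b$ from the lattice (\ref{polesb}) are crossed during the deformation, and (ii) the deformed contour retains enough decay at infinity, exploiting the strip-admissibility condition imposed on $\mathsf{Q}$ after (\ref{IQ}) to tame the Gaussian factor $e^{-i\pi x^2}$. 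Standard saddle-point asymptotics then produce a prefactor $\sqrt{2\pi b^2/(-F_0''(x_*))}$; explicit evaluation of $F_0(x_*)$ and $F_0''(x_*)$, combined with the $O(1)$ contributions from $P_\mathcal{Q}$ and the subleading terms in the expansion of $s_b$, should reproduce the full right-hand side of (\ref{Qfinalasymptotics}). In particular, the amplitude $(e^{2\pi\mu}-\sinh^2(2\pi\sigma_s))^{-1/4}$ is expected to arise from $|F_0''(x_*)|^{-1/2}$, and the phase $e^{i\pi/4}$ from the steepest-descent direction at $x_*$.

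For (\ref{fWrelationexpected}) I would substitute $\sigma = i\sigma_s$ and $\nu = -\mu - i/2$ directly into (\ref{etasigmanudef})--(\ref{defW}). This yields $\eta = \frac{1}{2\pi}\arcsin(e^{-\pi\mu}\sinh(2\pi\sigma_s))$, so that $\sin(2\pi\eta) = e^{-\pi\mu}\sinh(2\pi\sigma_s)$ and $\cos(2\pi\eta) = e^{-\pi\mu}\sqrt{e^{2\pi\mu}-\sinh^2(2\pi\sigma_s)}$ under the branch conventions stated in Definition \ref{defgeneratingfuncPIII3}. After this substitution the two arguments $-e^{2i\pi(\sigma+\eta-i\nu/2)}$ and $-e^{-2i\pi(\sigma+\eta+i\nu/2)}$ of the dilogarithms in (\ref{defW}) rewrite as $e^{\pm 2\pi\sigma_s}(\cosh(2\pi\sigma_s) + i\sqrt{e^{2\pi\mu}-\sinh^2(2\pi\sigma_s)})$, matching exactly the dilogarithm arguments in (\ref{finQ}). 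The polynomial terms $-(2\pi\eta)^2 + (\pi\nu)^2$ in (\ref{defW}), combined with the correction $-\Li_2(-e^{2\pi\mu})/(2\pi i) - 2i\pi\sigma_s^2 + i\pi/4$ on the right-hand side of (\ref{fWrelationexpected}), would then reproduce the logarithmic and constant pieces in (\ref{finQ}).

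The main obstacle is the saddle-point step. The asymptotic expansion of $s_b$ from Section \ref{section3} is valid only inside certain horizontal strips, so the deformed contour must remain in the region of validity while simultaneously avoiding the lattice (\ref{polesb}); controlling the error term uniformly along the whole contour, including the tails where Gaussian decay must dominate the subexponential growth of $s_b$, is the most delicate part. A secondary issue is branch tracking: producing exactly the phase $e^{i\pi/4}$ in (\ref{Qfinalasymptotics}) and faithfully carrying the principal-branch conventions of $\arcsin$, $\Li_2$, and the square root through the manipulations in (\ref{fWrelationexpected}) is a matter of careful bookkeeping rather than deep technique.
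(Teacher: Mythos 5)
Your overall strategy is the right one, and it matches the paper's in outline (rescale $x = t/b$, apply the $s_b(z/b)$ expansion from Proposition \ref{sbprop}, locate the saddle, deform $\mathsf{Q}$, estimate the tails, and then treat (\ref{fWrelationexpected}) by manipulating (\ref{defW})). However, there are two concrete errors and one underestimated difficulty.

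First, the saddle point is off by a sign. Writing $u = e^{2\pi x}$, the saddle equation $(1 + u e^{-2\pi\sigma_s})(1 + u e^{2\pi\sigma_s}) = -e^{2\pi\mu}$ gives $u^2 + 2u\cosh(2\pi\sigma_s) + 1 + e^{2\pi\mu} = 0$, whose roots are
$u_\pm = -\cosh(2\pi\sigma_s) \pm i\sqrt{e^{2\pi\mu} - \sinh^2(2\pi\sigma_s)}$,
not $+\cosh + i\sqrt{\cdot}$. The paper takes $z_- = -\cosh - i\sqrt{\cdot}$, which lies in the third quadrant so that $\im t_0 \in (-1/2, -1/4)$, compatible with the strip in which $b\mathsf{Q}$ lives. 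Your $u_*$ has $\arg u_* \in (0, \pi/2)$, putting the supposed saddle in $\im x_* \in (0, 1/4)$, which is on the wrong side of the real axis and not reachable by an admissible deformation of $\mathsf{Q}$. The quantity $\cosh + i\sqrt{\cdot}$ appearing inside the $\Li_2$'s in (\ref{finQ}) is $-e^{2\pi t_0}$, not $e^{2\pi t_0}$: the integrand's dilogarithms are $\Li_2(-e^{2\pi(t \pm \sigma_s)})$, which carry an extra minus sign, and indeed the $\ln$ term in (\ref{finQ}) has argument $-\cosh - i\sqrt{\cdot}$, which is $e^{2\pi t_0}$ itself.

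Second, the direct substitution $\sigma = i\sigma_s$, $\nu = -\mu - i/2$ into (\ref{defW}) does not yield dilogarithm arguments matching (\ref{finQ}). Carrying out your computation with the principal branch of $\eta_{1,1}(\sigma_s, -\mu) = \frac{1}{2\pi}\arcsin(e^{-\pi\mu}\sinh(2\pi\sigma_s))$, the first dilogarithm argument in $W$ comes out as $-e^{-2\pi(\mu + \sigma_s)}\big(\sinh(2\pi\sigma_s) - i\sqrt{e^{2\pi\mu}-\sinh^2(2\pi\sigma_s)}\big)$, which is not equal to $e^{-2\pi\sigma_s}(\cosh(2\pi\sigma_s) + i\sqrt{\cdot})$ for generic $(\sigma_s, \mu)$. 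To bridge the gap, the paper applies two non-trivial dilogarithm identities, the inversion formula $\Li_2(z) + \Li_2(1/z) = -\pi^2/6 - \tfrac12\ln^2(-z)$ and the reflection formula $\Li_2(z) + \Li_2(1-z) = \pi^2/6 - \ln z\ln(1-z)$, each applied twice, and then massages the resulting products of logarithms using the relation $(1 + s z_-)(1 + z_-/s) = -e^{2\pi\mu}$. These identities are essential; what remains after them is not "polynomial bookkeeping" but a genuine functional-equation argument. Relatedly, since $W$ is defined by analytic continuation from real $0 < \eta < \sigma < 1/4$, $\nu < 0$, one must show that the continuation to $(i\sigma_s, -\mu - i/2)$ does not cross the $\arcsin$ or $\Li_2$ branch cuts before the principal-branch formula can even be written down; the paper does this via a two-parameter deformation $\eta_{\alpha,\beta}$ with explicit modulus bounds on the dilogarithm arguments, a step you acknowledge in passing but would need to carry out to make the argument sound.
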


The proof of Theorem \ref{Qth} 
is presented in Section \ref{section5}. 

Let us explain the origin of Definition \ref{defgeneratingfuncPIII3}. The Painlev\'e $\textrm{III}_3$ equation 
\begin{equation} \label{painleveIII3}
\frac{d^2u}{dr^2} + \frac{1}r \frac{du}{dr} + \sin(u) = 0
\end{equation}
is also known as the Painlev\'e $\textrm{III}_3(D_8)$ equation. It is a special case of the third Painlev\'e equation and is also a radial-symmetric reduction of the elliptic sine-Gordon equation. 
The Painlev\'e $\textrm{III}_3$ tau function is defined by \cite{JMU}
\begin{equation}
\frac{d}{dr} \operatorname{ln}{\tau(2^{-12}r^4)} = \frac{r}{4} \left[ \lb \frac{i u_r}2 + \frac{1}{r} \rb^2 + \frac{\cos(u)}{2} \right].
\end{equation}
In particular, the logarithmic derivative $\zeta(t) := t \frac{d}{dt} \ln(\tau(t))$ satisfies 
\begin{equation} \label{equationtauIII3}
(t \zeta'')^2 = 4 (\zeta')^2 (\zeta-t\zeta')-4\zeta',
\end{equation}
where $f'(t) := \frac{d f}{dt}$. Equation \eqref{equationtauIII3} admits a two-parameter family of solutions characterized by the following asymptotic expansion as $t \to 0$:
\begin{equation} \label{shortdistancePIII}
\tau(t) = \sum_{n \in \mathbb Z} e^{4i\pi n \eta} \mathcal F(\sigma+n,t).
\end{equation}
Several remarks are in order. 

\begin{itemize}
\item The function $\mathcal F(\sigma,t)$ is the so-called irregular conformal block at central charge $c=1$ \cite{ILT}.  Thanks to the celebrated AGT correspondence \cite{AGT}, $\mathcal F(\sigma,t)$ admits a representation given by a combinatorial sum over pairs of Young diagrams.

\item The pair of parameters $(\sigma,\eta)$ characterizes the monodromy of the linear system associated with the Painlev\'e $\textrm{III}_3$ equation, in fact, it is a pair of Darboux coordinates on the Painlev\'e $\textrm{III}_3$ monodromy manifold. For more details, the reader is referred to \cite{ILT,IP}. 

\item The expansion \eqref{shortdistancePIII} was conjectured in \cite{GIL} and proved in \cite{LGBessel} by constructing the tau function as a Fredholm determinant of a generalized Bessel kernel. The principal minor expansion of the Fredholm determinant then leads to \eqref{shortdistancePIII}.
\end{itemize}

The leading asymptotics of the tau function as $t\to \infty$ reads
\begin{equation} \label{longdistancePIII3}
\tau(2^{-12} r^4) = \chi(\sigma,\nu) e^{\frac{i \pi \nu^2}4} 2^{\nu^2} (2\pi)^{-\frac{i\nu}2} G(1+i\nu) r^{\frac{\nu^2}2+\frac14} e^{\frac{r^2}{16}+\nu r} \lb 1 + o(1)\rb, \quad r \to \infty.
\end{equation}
Moreover,  \cite[Conjecture 2]{ILT} states that the structure of this asymptotic expansion has a Fourier structure similar to \eqref{shortdistancePIII}:
\begin{equation}
\tau(2^{-12} r^4) = \chi(\sigma,\nu) \sum_{n \in \mathbb Z} e^{4i\pi n \rho} \mathcal G(\nu+in,r), 
\end{equation}
where $\mathcal G(\nu,r)$ itself has an asymptotic expansion as $r\to \infty$, whose coefficients are determined recursively using \eqref{equationtauIII3}. The conjecture also states that the pair $(\nu,\rho)$ is related to the pair $(\sigma,\eta)$ characterizing the expansion \eqref{shortdistancePIII} by
\begin{equation} \label{longdistancePIII} 
e^{\pi \nu} = \frac{\sin(2\pi \eta)}{\sin(2\pi \sigma)}, \qquad e^{4i\pi \rho} = \frac{\sin(2\pi \eta)}{\sin(2\pi(\sigma+\eta))}.
\end{equation}

The connection constant $\chi(\sigma,\nu)$ is the analog of the connection constant $\Upsilon_{k-1,k+2}$ considered above in the case of Painlev\'e I. 
An explicit expression for $\chi(\sigma,\nu)$ was conjectured in \cite{ILT} and proved in \cite{IP} utilizing Riemann-Hilbert techniques. Just like in the case of Painlev\'e I, $\chi$ is closely related to the generating function $W(\sigma,\nu)$ of the canonical transformation between the pairs $(\sigma,\eta)$ and $(\rho,\nu)$. Such a generating function is defined by the following differential equations \cite{ILT}:
\begin{align}
 \eta = \frac{\partial W(\sigma,\nu)}{\partial\sigma} = \frac{1}{2\pi} \operatorname{arcsin}(e^{\pi \nu} \sin(2\pi \sigma)), \qquad
\rho = -i \frac{\partial W(\sigma,\nu)}{\partial \nu}.
\end{align}
A solution of these equations was found in \cite{ILT} and corresponds to \eqref{defW}.

\section{Semiclassical limit of Ruijsenaars' hyperbolic gamma function $s_b$} \label{section3}

In this section, we derive an asymptotic formula for the function $s_b(z/b)$ as $b \to 0^+$. This formula will be crucial for the saddle point analysis of the functions $\mathcal M$ and $\mathcal Q$.
Asymptotic formulas for $s_b(z/b)$ have appeared in the literature (see e.g. \cite[Section 3.3]{DGLZ2009}), but we have not found a formula covering the asymptotic regime of interest to us. (For the proofs of Sections \ref{section4} and \ref{section5} it is important that the formula is valid uniformly with respect to $z$ in a horizontal strip of the form $|\im z| \leq \frac{1}{2}-\delta$.) 

The next lemma will be used to compute the terms of the asymptotic expansion.

\begin{lemma}\label{sinhintlemma}
For any integer $n \geq 0$ and any $z \in \C$ in the horizontal strip $|\im z| < \frac{1}{2}$, it holds that
\begin{align}\label{intLi22n}
\int_{\R + i0} \frac{e^{-2ixz}}{\sinh(x)}x^{2n-2} dx
= 2\Li_{2-2n}(-e^{2\pi z}) (\pi i)^{2n-1}.
\end{align}
\end{lemma}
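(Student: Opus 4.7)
My plan is to prove (\ref{intLi22n}) by a contour-shift argument on the substrip $\{\re z < 0,\, |\im z| < 1/2\}$, and then to extend the result to the full strip $|\im z| < 1/2$ by analytic continuation. Both sides are analytic on this larger strip: the integrand on $\R + i0$ has no singularities (the $+i0$ prescription lifts the contour above the pole of $1/\sinh x$ at $x=0$ and above the singularity of $x^{2n-2}$ when $n=0$) and decays like $e^{-(1 - 2|\im z|)|\re x|}|\re x|^{2n-2}$ at infinity, which is integrable throughout the strip; on the right, $-e^{2\pi z}$ avoids the branch cut $[1,+\infty)$ of $\Li_2$ precisely when $|\im z| < 1/2$, and for $n \geq 1$ the function $\Li_{2-2n}$ is rational in its argument.

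Fix $z$ with $\re z < 0$ and $|\im z| < 1/2$, and consider the rectangle $\gamma_{R,N}$ with vertices $\pm R + i0$ and $\pm R + iM$, where $M = \pi(N+\tfrac{1}{2})$, traversed counterclockwise. Inside $\gamma_{R,N}$, the only singularities of $e^{-2ixz} x^{2n-2}/\sinh x$ are the simple poles at $x = i\pi k$, $k = 1,\ldots,N$, with residues $(-1)^k e^{2\pi k z} (i\pi k)^{2n-2}$ (from $\mathrm{Res}_{x=i\pi k}\, 1/\sinh x = 1/\cosh(i\pi k) = (-1)^k$). Elementary estimates show that the integrals over the three remaining sides vanish in the iterated limit $R \to \infty$ followed by $N \to \infty$: on the vertical sides, the bound $|1/\sinh x| \lesssim e^{-R}$ combined with $|e^{-2ixz}| \leq e^{2R|\im z|} e^{2v \re z}$ gives decay $e^{-R(1 - 2|\im z|)}$; on the top side one uses the identity $|\sinh(u + iM)| = \cosh u$ (valid because $M = \pi(N+\tfrac{1}{2})$) to bound the $u$-integral, so the overall factor $e^{2M\re z}$ dominates the polynomial growth in $M$ coming from $(u+iM)^{2n-2}$ since $\re z < 0$.

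Passing to the limit in the residue theorem yields
\begin{align*}
\int_{\R + i0} \frac{e^{-2ixz}}{\sinh x} x^{2n-2}\, dx
= 2\pi i (i\pi)^{2n-2} \sum_{k=1}^{\infty} \frac{(-e^{2\pi z})^k}{k^{2-2n}}
= 2(\pi i)^{2n-1} \Li_{2-2n}(-e^{2\pi z}),
\end{align*}
which is (\ref{intLi22n}) on the chosen substrip; analytic continuation in $z$ then extends the identity to $|\im z| < 1/2$. I expect the top-side estimate to be the main technical nuisance, especially for large $n$, because the polynomial factor $(u+iM)^{2n-2}$ grows with $M$; however, the combination of the strict inequality $\re z < 0$ with the exact identity $|1/\sinh(u+iM)| = 1/\cosh u$ on the chosen horizontal line makes the exponential factor $e^{2M\re z}$ defeat any polynomial growth, so the argument remains elementary.
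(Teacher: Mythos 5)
Your proof is correct and follows essentially the same approach as the paper: establish analyticity of both sides in the strip $|\im z|<1/2$, restrict to a half-strip where the series converges (you take $\re z<0$; the paper takes real $z<0$), push the contour to infinity in the upper half-plane through large rectangles avoiding the poles of $1/\sinh$, and sum the residues at $i\pi k$ to recover the polylogarithm series. The only cosmetic difference is that the paper uses rectangles whose width and height scale together, whereas you take the iterated limit $R\to\infty$ then $N\to\infty$; both are fine.
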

\begin{proof}
Let $n \geq 0$ be an integer. The left-hand side of (\ref{intLi22n}) is easily seen to be an analytic function of $z$ in the strip $|\im z| < \frac{1}{2}$. The polylogarithm $\Li_{2-2n}$ is analytic in $\C \setminus [1, +\infty)$ if $n = 0$, and in $\C \setminus \{1\}$ if $n \geq 1$. Since the argument $-e^{2\pi z}$ avoids $[1, +\infty)$ for $z$ in the strip $|\im z| < \frac{1}{2}$, it follows that the right-hand side of (\ref{intLi22n}) also is analytic in this strip. Hence it is enough to prove (\ref{intLi22n}) for $z < 0$. 

Suppose that $z < 0$. By using rectangles with corners at the points $\pm \pi M$, $\pm \pi M + (\pi M + \pi/2) i$ and letting the integer $M$ tend to infinity, we can deform the contour to infinity in the upper half-plane. Taking the residue contributions from the simple poles at $\{\pi i m \,|\, m = 1, 2,\dots\}$ into account, this yields
\begin{align}\label{inte2ixzsinh}
\int_{\R + i0} \frac{e^{-2ixz}}{\sinh(x)}x^{2n-2} dx
= 2\pi i \sum_{j=1}^\infty (-1)^je^{2\pi j z}  (\pi i j)^{2n-2}
= 2\Li_{2-2n}(-e^{2\pi z}) (\pi i)^{2n-1},
\end{align}
showing that (\ref{intLi22n}) indeed holds for all $z < 0$ and any integer $n \geq 0$.
\end{proof}

The next proposition contains the asymptotic formula for $\ln s_b(z/b)$ that we will need. We let $B_n$ be the Bernoulli numbers defined by
\begin{align}\label{Bernoullidef}
\frac{x}{e^x -1} = \sum_{n=0}^\infty \frac{B_n x^n}{n!},
\end{align}
where the series converges in the disk $|x| < 2\pi$.

\begin{proposition}[Asymptotics of $\ln s_b(z/b)$]\label{sbprop}
For any integer $N \geq 1$ and any $\delta > 0$, it holds that
\begin{align}\nonumber
\ln s_b\Big(\frac{z}{b}\Big) 
= &\; \frac{\pi i}{2} \Big(\frac{z}{b}\Big)^2 + \frac{\pi i}{24}(b^2 + b^{-2}) - \sum_{n=0}^N \frac{(1- 2^{2n-1})B_{2n} }{(2n)!} \Li_{2-2n}(-e^{2\pi z}) (\pi i b^2)^{2n-1} 
	\\ \label{lnsbzbexpansion}
& + O(b^{4N+2}) \qquad \text{as $b \to 0^+$}
\end{align}
uniformly for $z$ in the horizontal strip 
\begin{align}\label{Sdeltadef}
S_\delta := \bigg\{z \in \C \,\bigg|\, |\im z| \leq \frac{1}{2}-\delta\bigg\}.
\end{align}
\end{proposition}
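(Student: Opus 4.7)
My plan is to start from the integral representation
\[
\ln s_b(z/b) = i \int_0^\infty \bigg(\frac{\sin(2xz)}{2\sinh(x)\sinh(b^2x)} - \frac{z}{b^2 x}\bigg)\frac{dx}{x},
\]
obtained from \eqref{defsb} by the change of variable $y = bx$, and to expand the factor $1/\sinh(b^2 x)$ via the Bernoulli series $\operatorname{csch}(y) = 1/y - \sum_{n=1}^\infty \frac{2(2^{2n-1}-1)B_{2n}}{(2n)!}\, y^{2n-1}$, valid on the disk $|y| < \pi$. After truncating at order $N$ with Taylor remainder $R_N(b^2 x)$, the integrand decomposes into three pieces: (i) the $1/(b^2 x)$ term combines with the $-z/(b^2 x)$ subtraction to produce a $b$-independent integral $F(z) := \int_0^\infty (\sin(2xz)/(2x \sinh x) - z/x)\,dx/x$, contributing $iF(z)/b^2$; (ii) each intermediate term $1 \leq n \leq N$ contributes a multiple of $b^{4n-2} \int_0^\infty \sin(2xz)\, x^{2n-2}/\sinh(x)\,dx$; (iii) a remainder integral.

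Each intermediate integral is evaluated by extending to the contour $\R+i0$ (the integrand is even in $x$ and, for $n \geq 1$, regular at the origin), writing $\sin(2xz) = (e^{2ixz}-e^{-2ixz})/(2i)$, and applying Lemma \ref{sinhintlemma} at both $\pm z$; this yields a combination of $\Li_{2-2n}(-e^{\pm 2\pi z})$. The integral $F(z)$ is handled separately via double differentiation in $z$: the standard identity $\int_0^\infty \sin(2xz)/\sinh(x)\,dx = \tfrac{\pi}{2}\tanh(\pi z)$ gives $F''(z) = -\pi \tanh(\pi z)$, while the partial-fraction expansion of $\operatorname{csch}$ gives $F'(0) = \int_0^\infty(1/(x\sinh x) - 1/x^2)\,dx = \sum_{n \geq 1}(-1)^n/n = -\ln 2$. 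Integrating twice produces
\[
F(z) = -\frac{\pi z^2}{2} - \frac{\pi}{24} - \frac{\Li_2(-e^{-2\pi z})}{2\pi}.
\]
The various $\Li_{2-2n}(-e^{-2\pi z})$ terms are then converted to $\Li_{2-2n}(-e^{2\pi z})$ using three reflection identities: the dilogarithm reflection $\Li_2(w)+\Li_2(1/w) = -\tfrac{1}{2}\ln^2(-w)-\pi^2/6$ for $n=0$, which produces both the $\frac{\pi i}{2}(z/b)^2$ leading term and the $\frac{\pi i}{24 b^2}$ piece of the prefactor; the elementary $\Li_0(-e^{-2\pi z})+\Li_0(-e^{2\pi z})=-1$ for $n=1$, which produces the complementary $\frac{\pi i b^2}{24}$ piece; and the polynomial identity $\Li_{2-2n}(-e^{-2\pi z})+\Li_{2-2n}(-e^{2\pi z})=0$ for $n\geq 2$.

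The main obstacle is the uniform remainder bound for $z \in S_\delta$. Because the Bernoulli series for $\operatorname{csch}$ has radius of convergence $\pi$, the Taylor bound $|R_N(y)| \leq C|y|^{2N+1}$ holds only for bounded $|y|$. My plan is to split the $x$-integral at $x_0 = r/b^2$ for a fixed $r\in(0,\pi)$. On $[0,x_0]$, Cauchy's estimate for the function $\operatorname{csch}(y)-1/y$, which is holomorphic on $|y|<\pi$, yields $|R_N(b^2x)| \leq C_r (b^2 x)^{2N+1}$; combined with the uniform-in-$\re z$ bound $|\sin(2xz)/\sinh(x)| \leq \cosh(x(1-2\delta))/\sinh(x)$ for $z \in S_\delta$ (which follows from $|\sin w| \leq \cosh(\im w)$), and noting that the factor $x^{2N-1}$ with $N\geq 1$ tames the $1/x$ behavior of $\cosh/\sinh$ at the origin while $\cosh(x(1-2\delta))/\sinh(x) \sim 2e^{-2\delta x}$ at infinity, one obtains a contribution of order $b^{4N+2}\int_0^\infty x^{2N-1} e^{-2\delta x}\,dx = O(b^{4N+2})$. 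On $[x_0,\infty)$, $\operatorname{csch}(b^2x)$ is exponentially small and the polynomial remnant of $R_N$ grows only polynomially in $b^2x$; together with the $e^{-2\delta r/b^2}$ factor, this contribution is exponentially small in $1/b^2$. Combining these two estimates yields the $O(b^{4N+2})$ remainder uniformly in $z \in S_\delta$, completing the strategy.
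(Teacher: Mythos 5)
Your proposal is correct, but it follows a genuinely different route from the paper's proof, and it is worth spelling out where the two diverge. The paper starts from the formula $\ln s_b(z) = \frac{\pi i}{2}z^2 + \frac{\pi i}{24}(b^2 + b^{-2}) + \ln e_b(z)$ with $\ln e_b(z) = -\int_{\R + i0}\frac{e^{-2itz}}{\sinh(bt)\sinh(t/b)}\frac{dt}{4t}$, imported from Teschner--Vartanov. After the substitution $t = bx$ the quadratic and $Q^2$ pieces are already in place, and the remaining contour integral directly yields the $\Li_{2-2n}(-e^{2\pi z})$ terms via Lemma~\ref{sinhintlemma}. The price is that the kernel $e^{-2ixz}$ is not symmetric in $z\mapsto -z$, so the contour must be threaded above the origin when $\re z \le 0$ and below when $\re z \ge 0$, with a residue bookkeeping step to reconcile the two; the remainder bound is then organized through the split $C_\tau \cup D_\tau$ along $|x| = \pi/(2\tau)$.

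You instead start from the defining representation~\eqref{defsb} with the $\sin(2xz)$ kernel on $[0,\infty)$, which has the advantage of being even and manifestly symmetric: $|\sin(2xz)| \le \cosh(2x\,\im z)$ gives a uniform bound on $S_\delta$ with no case split on $\re z$, and the integrand is regular at the origin for every $n\ge 1$ so the passage to $\frac12\int_{\R+i0}$ is unproblematic. The cost is borne at the algebraic end: you must compute the $n=0$ piece $F(z)$ from scratch (double differentiation, the partial-fraction evaluation of $F'(0) = -\ln 2$, and the integration producing $\Li_2(-e^{-2\pi z})$), and then invoke the three reflection identities (dilogarithm reflection for $n=0$, $\Li_0(w)+\Li_0(1/w)=-1$ for $n=1$, vanishing of $\Li_{-m}(w)+\Li_{-m}(1/w)$ for even $m\ge 2$) to fold the $\Li_{2-2n}(-e^{-2\pi z})$ contributions back into the stated form and to recover the $\frac{\pi i}{2}(z/b)^2 + \frac{\pi i}{24}(b^2+b^{-2})$ prefactor, which the paper obtains for free from the TV2013 identity. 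I verified this bookkeeping: your $F(z) = -\frac{\pi z^2}{2} - \frac{\pi}{24} - \frac{\Li_2(-e^{-2\pi z})}{2\pi}$ is correct, and the $n=1$ reflection supplies precisely the $\frac{\pi i b^2}{24}$ term completing the $b^2 + b^{-2}$ symmetry. Your remainder estimate --- Cauchy bound on $|R_N(b^2x)| \le C(b^2x)^{2N+1}$ over $[0,r/b^2]$ combined with the $e^{-2\delta x}$ decay of $\cosh(x(1-2\delta))/\sinh x$, and an exponentially small tail from $[r/b^2,\infty)$ --- is essentially parallel to the paper's $C_\tau/D_\tau$ split and is sound. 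In short: you trade the paper's external TV2013 input and its $\re z$ case analysis for a self-contained evaluation of $F(z)$ plus reflection identities; both are valid, and yours is arguably more elementary while the paper's is shorter once the TV2013 formula is accepted.
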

\begin{proof}
Fix $N \geq 1$, $\epsilon > 0$, and $\delta > 0$.$  $
By \cite[Eq. (B.5)]{TV2013}, we have
$$\ln s_b(z) = \frac{\pi i}{2} z^2 + \frac{\pi i}{24}(b^2 + b^{-2}) + \ln e_b(z),$$
where $\ln e_b(z)$ is defined in the strip $|\im z| < Q/2$ by
$$\ln e_b(z) = -\int_{\R + i0} \frac{e^{-2itz}}{\sinh(bt) \sinh(\frac{t}{b})} \frac{dt}{4t}.$$
Making the change of variables $t = bx$, we find that
$$\ln s_b\Big(\frac{z}{b}\Big) = \frac{\pi i}{2} \Big(\frac{z}{b}\Big)^2 + \frac{\pi i}{24}(b^2 + b^{-2}) - \int_{\R + i0}  \frac{e^{-2ixz}}{\sinh(b^2x) \sinh(x)} \frac{dx}{4x}$$
for $b > 0$ and $z$ in the strip $|\im z| < (b^2 + 1)/2$.
Letting $\tau = b^2$ and using Lemma \ref{sinhintlemma}, we conclude that the expansion in (\ref{lnsbzbexpansion}) will follow if we can prove that
\begin{align}\nonumber
 \int_{\R + i0}  \frac{e^{-2ixz}}{\sinh(\tau x) \sinh(x)} \frac{dx}{4x}
 = &\; \sum_{n=0}^N \frac{(1- 2^{2n-1})B_{2n} }{2 (2n)!} \tau^{2n-1} \int_{\R + i0} \frac{e^{-2ixz}}{\sinh(x)}x^{2n-2} dx
 	\\ \label{fintdef}
& + O(\tau^{2N+1}) \qquad \text{as $\tau \to 0^+$}
\end{align}
uniformly for $z \in S_\delta$.

In order to prove (\ref{fintdef}), we fix $r \in (0, \pi/2)$. The integrand in the left-hand side of (\ref{fintdef}) has poles at the points in $\pi i \Z \cup \frac{\pi i}{\tau} \Z$. Thus, for $\tau \in (0,1)$, we may choose the contour of integration so that it runs along $(-\infty, -r]$, passes above the origin along a clockwise semicircle of radius $r$ centered at $0$, and then proceeds along $[r, +\infty)$. 
We write this contour as $C_\tau \cup D_\tau$, where $C_\tau$ is the part of the contour contained in the disk $\{|x| \leq \frac{\pi}{2\tau}\}$, and $D_\tau = \R \cap \{|x| > \frac{\pi}{2\tau}\}$ is the rest of the contour. Straightforward estimates show that
\begin{align}\nonumber
\bigg|\int_{D_\tau} \frac{e^{-2ixz}}{\sinh(\tau x) \sinh(x)} \frac{dx}{4x}\bigg|
& \leq C
\bigg(\int_{-\infty}^{-\frac{\pi}{2\tau}} + \int_{\frac{\pi}{2\tau}}^{+\infty}\bigg) \frac{e^{2|x \im z|}}{\sinh(\pi/2) e^{|x|}} \frac{dx}{4|x|}
	\\ \label{intDtauestimate}
& \leq C \int_{\frac{\pi}{2\tau}}^{+\infty} e^{x (2 |\im z| - 1)} \frac{dx}{x}
 \leq C \int_{\frac{\pi}{2\tau}}^{+\infty} e^{- 2\delta x} \frac{dx}{x}
 \leq C e^{- \delta \frac{\pi}{\tau}}
\end{align}
uniformly for $\tau \in (0,1)$ and $z \in S_\delta$.
On the other hand, using that $B_{2n+1} = 0$ for $n = 1, 2, \dots$, we obtain from (\ref{Bernoullidef}) that
$$\frac{1}{\sinh w} - \frac{1}{w} 
= \frac{1}{w}\bigg(2\frac{-w}{e^{-w} -1} - \frac{-2w}{e^{-2w} -1} - 1\bigg)
= \sum_{n=1}^\infty \frac{2(1- 2^{2n-1})B_{2n} w^{2n-1}}{(2n)!},
$$
where the series converges in the disk $|w| < \pi$.
In particular, for any $\epsilon > 0$, we have
$$\bigg|\frac{1}{\sinh{w}} - \sum_{n=0}^N \frac{2(1- 2^{2n-1})B_{2n} w^{2n-1}}{(2n)!} \bigg|
\leq C |w|^{2N+1}$$
for all $w$ in the disk $|w| < \pi - \epsilon$. It follows that
$$\bigg|\frac{1}{\sinh{\tau x}} - \sum_{n=0}^N \frac{2(1- 2^{2n-1})B_{2n} (\tau x)^{2n-1}}{(2n)!}\bigg| \leq C |\tau x|^{2N+1}$$
for all $x \in C_\tau$ and all $\tau \in (0,1)$. 
We infer that, for $\tau \in (0,1)$ and $z \in S_\delta$,
\begin{align} \nonumber
& \bigg|\int_{C_\tau}  \frac{e^{-2ixz}}{\sinh(\tau x) \sinh(x)} \frac{dx}{4x}
- \sum_{n=0}^N \frac{(1- 2^{2n-1})B_{2n} \tau^{2n-1}}{2(2n)!} \int_{C_\tau}  \frac{e^{-2ixz}}{\sinh(x)} x^{2n-2} dx\bigg|
	\\ \label{intminussumB2n}
& \leq C\tau^{2N+1} \int_{C_\tau}  \bigg| \frac{e^{-2ixz}}{\sinh(x)} x^{2N} \bigg| dx.
\end{align}
Since, for $\tau \in (0,1)$ and $z \in S_\delta \cap \{\re z \leq 0\}$,
\begin{align}\nonumber
\int_{C_\tau} \bigg| \frac{e^{-2ixz}}{\sinh(x)} x^{2N} \bigg| dx
 \leq &\; C\int_{\{r e^{i\alpha} | 0\leq \alpha \leq \pi\}} \frac{e^{2(\im x) (\re z) + 2(\re x) (\im z)}}{\sinh(x)} x^{2N} dx
	\\ \label{intCesinhx2N}
& + C \int_{r}^{\infty}  \frac{e^{2x |\im z|}}{\sinh(x)}  x^{2N} dx
 \leq C,
\end{align}
we deduce that, as $\tau \to 0^+$,
\begin{align}\label{intCtauCtau}
\int_{C_\tau}  \frac{e^{-2ixz}}{\sinh(\tau x) \sinh(x)} \frac{dx}{4x}
= &\; \sum_{n=0}^N \frac{(1- 2^{2n-1})B_{2n} \tau^{2n-1}}{2(2n)!} \int_{C_\tau}  \frac{e^{-2ixz}}{\sinh(x)} x^{2n-2} dx 
+ O(\tau^{2N+1})
\end{align}
uniformly for $z \in S_\delta \cap \{\re z \leq 0\}$. Up to an error of order $O(e^{- \delta \frac{\pi}{2\tau}} )$, the contour $C_\tau$ in the integral on the right-hand side of (\ref{intCtauCtau}) may be replaced by $\R + i0$. Indeed, by an estimate analogous to the one in (\ref{intDtauestimate}), we have, for $n = 0, 1, \dots, N$,
\begin{align}\label{intDtauestimate2}
\bigg|\int_{D_\tau}  \frac{e^{-2ixz}}{\sinh(x)} x^{2n-2} dx \bigg|
\leq C \int_{\frac{\pi}{2\tau}}^{+\infty} e^{- 2\delta x}  x^{2n-2} dx
\leq C e^{- \delta \frac{\pi}{2\tau}} \int_{\frac{\pi}{2\tau}}^{+\infty} e^{- \delta x} x^{2n-2}  dx
\leq C e^{- \delta \frac{\pi}{2\tau}} 
\end{align}
uniformly for $\tau \in (0,1)$ and $z \in S_\delta$.
On the other hand, by (\ref{intDtauestimate}), the contour $C_\tau$ in the integral on the left-hand side of (\ref{intCtauCtau}) may be replaced by $\R + i0$ with an error of order $O(e^{- \delta \frac{\pi}{\tau}} )$.
It therefore follows from (\ref{intCtauCtau}) that (\ref{fintdef}) holds uniformly for $z \in S_\delta \cap \{\re z \leq 0\}$. 

It remains to prove that (\ref{fintdef}) holds uniformly also for $z \in S_\delta \cap \{\re z \geq 0\}$. To this end, let $C_\tau'$ be the contour $C_\tau$ but with the semicircle of radius $r$ in the upper half-plane replaced by a semicircle of the same radius in the lower half-plane, i.e.,
$$C_\tau' := \Big[-\frac{\pi}{2\tau}, -r\Big] \cup \{re^{i\alpha} \, | \, -\pi \leq \alpha \leq 0\} \cup \Big[r, \frac{\pi}{2\tau}\Big],$$ 
The estimate (\ref{intminussumB2n}) remains true with $C_\tau$ replaced by $C_\tau'$. 
Furthermore, analogously to (\ref{intCesinhx2N}), we have, for $\tau \in (0,1)$ and $z \in S_\delta \cap \{\re z \geq 0\}$,
\begin{align*}
\int_{C_\tau'} \bigg| \frac{e^{-2ixz}}{\sinh(x)} x^{2N} \bigg| dx \leq C,
\end{align*}
and hence, as $\tau \to 0^+$,
\begin{align}\label{intCtauprime}
\int_{C_\tau'}  \frac{e^{-2ixz}}{\sinh(\tau x) \sinh(x)} \frac{dx}{4x}
= &\; \sum_{n=0}^N \frac{(1- 2^{2n-1})B_{2n} \tau^{2n-1}}{2(2n)!} \int_{C_\tau'}  \frac{e^{-2ixz}}{\sinh(x)} x^{2n-2} dx 
+ O(\tau^{2N+1})
\end{align}
uniformly for $z \in S_\delta \cap \{\re z \geq 0\}$. By (\ref{intDtauestimate}) and (\ref{intDtauestimate2}), the contour $C_\tau'$ on the left- and right-hand side of (\ref{intCtauprime}) may be replaced by $\R - i0$ with errors of order $O(e^{- \delta \frac{\pi}{\tau}} )$ and $O(e^{- \delta \frac{\pi}{2\tau}} )$, respectively. Thus
\begin{align}\nonumber
 \int_{\R - i0}  \frac{e^{-2ixz}}{\sinh(\tau x) \sinh(x)} \frac{dx}{4x}
 = &\; \sum_{n=0}^N \frac{(1- 2^{2n-1})B_{2n} }{2 (2n)!} \tau^{2n-1} \int_{\R - i0} \frac{e^{-2ixz}}{\sinh(x)}x^{2n-2} dx
	\\ \label{fintdef2}
& + O(\tau^{2N+1}) \qquad \text{as $\tau \to 0^+$}
\end{align}
uniformly for $z \in S_\delta \cap \{\re z \geq 0\}$.
Deforming the contours in (\ref{fintdef2}) upward through the origin using the relations
$$\underset{x = 0}{\res} \bigg(\frac{e^{-2ixz}}{\sinh(\tau x) \sinh(x)} \frac{1}{4x}\bigg) = -\frac{\tau ^2+12 z^2+1}{24 \tau }$$
and
$$\underset{x = 0}{\res} \frac{e^{-2ixz}}{\sinh(x) x^2} = -\frac{1}{6} - 2z^2, \qquad 
\underset{x = 0}{\res} \frac{e^{-2ixz}}{\sinh(x)} = 1,$$
we find that (\ref{fintdef2}) remains true if the two integration contours are replaced by $\R + i0$. 
This proves that (\ref{fintdef}) holds uniformly for $z \in S_\delta \cap \{\re z \geq 0\}$ and completes the proof of the proposition.
\end{proof}

\section{Semiclassical limit of $\mathcal M$: Proof of Theorem \ref{Mth}} \label{section4}

\subsection{Proof of the asymptotic formula (\ref{Mfinalasymptotics}) for $\mathcal{M}$}
Fix $\zeta \in \R$ and $\omega \in \C$ such that $0 < \im \omega < 1/2$.
The definition \eqref{defM} of the function $\mathcal M$ implies that
\begin{align*}
\mathcal{M}\bigg(b, \frac{\zeta}{b}, \frac{\omega}{b}\bigg) = s_b\bigg(\frac{\zeta}{b}\bigg) \int_{\mathsf{M}} e^{i \pi  x \left(\frac{\zeta}{b} -\frac{i Q}{2}+2 \frac{\omega}{b} \right)} \frac{s_b(x-\frac{\zeta}{b} ) }{s_b\big(x+\frac{i Q}{2}\big)} dx.
\end{align*}
The change of variables $x = t/b$ gives
\begin{align}\label{calMtintegral}
\mathcal{M}\bigg(b, \frac{\zeta}{b}, \frac{\omega}{b}\bigg) 
=&\; 
 \int_{b\mathsf{M}} s_b\bigg(\frac{\zeta}{b}\bigg) e^{i \pi  \frac{t}{b} \left(\frac{\zeta}{b} -\frac{i Q}{2}+2 \frac{\omega}{b} \right)} \frac{s_b(\frac{t}{b}-\frac{\zeta}{b} ) }{s_b\big(\frac{t}{b}+\frac{i Q}{2}\big)} \frac{dt}{b}.
\end{align}
Since $\zeta \in \mathbb R$ and $\im \omega \in (0,1/2)$, the contour $b\mathsf{M}$ can be any curve from $-\infty$ to $+\infty$ lying within the strip $\im t \in (-1/2,0)$. We choose a small $\delta > 0$ and let $b\mathsf{M}$ be a contour independent of $b$ that lies within the slightly smaller strip $\im t \in (-1/2 + \delta, -2\delta)$.
If $S_\delta$ is the strip introduced in (\ref{Sdeltadef}), then $\zeta$, $t-\zeta$, and $t + biQ/2 = t + (1+b^2)i/2$ lie in $S_\delta$ for all $t \in b\mathsf{M}$ and all small enough $b>0$.
Therefore we can apply the asymptotic expansion of $s_b(z/b)$ obtained in Proposition \ref{sbprop} with $N = 1$ to deduce that the integrand in (\ref{calMtintegral}) satisfies 
\begin{align}\label{calMintegrandexpansion}
s_b\bigg(\frac{\zeta}{b}\bigg) e^{i \pi  \frac{t}{b} \left(\frac{\zeta}{b} -\frac{i Q}{2}+2 \frac{\omega}{b} \right)} \frac{s_b(\frac{t}{b}-\frac{\zeta}{b} ) }{s_b\big(\frac{t}{b}+\frac{i Q}{2}\big)} \frac{1}{b}
= \frac{e^{\frac{\pi i}{4}}}{b} e^{\frac{f(t)}{b^2}} g(t) e^{E(b^2,t) + O(b^6)} \qquad \text{as $b \to 0^+$}
\end{align}
uniformly for $t \in b\mathsf{M}$, where the functions $f(t) = f(t; \zeta, \omega)$ and $g(t)$ are analytic function of $t$ in the strip $-1/2 < \im t < 0$ defined by
\begin{align}\nonumber
& f(t) = - \frac{\Li_{2}(-e^{2\pi \zeta})+\Li_{2}(-e^{2\pi (t-\zeta)})-\Li_{2}(-e^{2\pi (t + \frac{i}{2})})}{2 \pi i}
+  i \pi  \zeta^2 +2 i \pi  t \omega +\pi  t+\frac{i \pi }{6},
	\\ \label{Mgdef}
& g(t) = e^{\pi  t-\frac{1}{2} \ln \left(1-e^{2 \pi  t}\right)},
\end{align}
and the function $E(\tau,t) = E(\tau,t; \zeta, \omega)$ is given by
\begin{align*}
E(\tau,t) =&\; \frac{i \big(\text{Li}_2(e^{2 \pi  t})-\text{Li}_2(e^{i \pi  \tau +2 \pi  t})\big)}{2 \pi  \tau} +\frac{\ln(1-e^{2 \pi  t})}{2}
   	\\
&  +\frac{i \pi  \tau}{12}  \bigg(\frac{e^{2 \pi  t+i \pi  \tau}}{e^{2 \pi  t+i \pi  \tau} - 1}-\frac{e^{2 \pi  \zeta }}{e^{2 \pi  \zeta}+1}-\frac{e^{2 \pi  t}}{e^{2 \pi  \zeta }+e^{2 \pi  t}}+2\bigg).
   \end{align*}   
The next lemma shows that $E(b^2, t)$ is small as $b \to 0^+$ uniformly for $t \in b\mathsf{M}$. 
   
\begin{lemma}\label{Elemma}
There is a constant $C > 0$ such that
$$|E(b^2,t)| \leq C b^2$$
for all sufficiently small $b > 0$ and all $t$ in the strip $\im t \in (-1/2 + \delta, -2\delta)$.
\end{lemma}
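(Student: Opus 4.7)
The plan is to perform a first-order Taylor expansion in $\tau$ of the first summand of $E(\tau,t)$, verify that the resulting $O(1)$ contribution is cancelled exactly by the second summand $+\tfrac12\ln(1-e^{2\pi t})$, and note that the third summand is manifestly $O(\tau)$. Specializing $\tau=b^{2}$ then gives the lemma. The only delicate point is uniformity of the remainder in $t$ as $\re t\to\pm\infty$, which is where the strip hypothesis $\im t\in(-\tfrac12+\delta,-2\delta)$ will be used.

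First I would observe that for $\im t\in(-\tfrac12+\delta,-2\delta)$ and $\tau\ge 0$ small, $e^{2\pi t+i\pi\tau}$ stays uniformly bounded away from $[1,+\infty)$, so $\Li_2(e^{2\pi t+i\pi\tau})$ is analytic in $\tau$ near $0$. Using $\Li_2'(z)=-\ln(1-z)/z$, one computes
$$\partial_\tau\Li_2(e^{2\pi t+i\pi\tau})=-i\pi\ln\!\bigl(1-e^{2\pi t+i\pi\tau}\bigr),\qquad \partial_\tau^{2}\Li_2(e^{2\pi t+i\pi\tau})=\frac{\pi^{2}\,e^{2\pi t+i\pi\tau}}{e^{2\pi t+i\pi\tau}-1}.$$
The second derivative is bounded uniformly in $t$ on the strip: it tends to $\pi^{2}$ as $\re t\to+\infty$ and to $0$ as $\re t\to-\infty$, and is continuous in between. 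Taylor's theorem with remainder therefore yields
$$\Li_2(e^{2\pi t+i\pi\tau})=\Li_2(e^{2\pi t})-i\pi\tau\ln\!\bigl(1-e^{2\pi t}\bigr)+O(\tau^{2})$$
uniformly for $t$ on the strip.

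Substituting into the first summand of $E(\tau,t)$ gives $-\tfrac12\ln(1-e^{2\pi t})+O(\tau)$, and this $-\tfrac12\ln(1-e^{2\pi t})$ (which is itself unbounded as $\re t\to+\infty$, since $\ln(1-e^{2\pi t})\sim 2\pi t+i\pi$) cancels exactly against the second summand $+\tfrac12\ln(1-e^{2\pi t})$ of $E(\tau,t)$. For the third summand, each of the rational expressions $e^{2\pi t+i\pi\tau}/(e^{2\pi t+i\pi\tau}-1)$, $e^{2\pi\zeta}/(e^{2\pi\zeta}+1)$, and $e^{2\pi t}/(e^{2\pi\zeta}+e^{2\pi t})$ is uniformly bounded on the strip (the first and third have limits $1$ and $0$ at the two ends), so this summand contributes $O(\tau)$. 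Adding the three pieces gives $|E(\tau,t)|\le C\tau$ uniformly in $t$, and setting $\tau=b^{2}$ proves the bound. The main obstacle, as anticipated, is that the $O(1)$ pieces of the first two summands individually diverge as $\re t\to+\infty$; the uniform bound on $\partial_\tau^{2}\Li_2$ is precisely what guarantees that their cancellation leaves a uniformly $O(\tau)$ error rather than a merely formal one.
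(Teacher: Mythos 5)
Your proof is correct and takes essentially the same approach as the paper: the paper writes $E(b^2,t)=\int_0^{b^2}\partial_\tau E\,d\tau$ and then shows $\partial_\tau E$ is uniformly bounded by establishing that $E_2$ is bounded and that the Taylor-remainder-type quantity $E_1(\tau,t)=\int_0^\tau \frac{\pi^2\tau'}{1-e^{-2\pi t-i\pi\tau'}}\,d\tau'$ is $O(\tau^2)$, which is precisely the uniform bound on $\partial_\tau^2\Li_2(e^{2\pi t+i\pi\tau})$ that powers your second-order Taylor expansion. The observation that the $O(1)$ part of the first summand exactly cancels the $\tfrac12\ln(1-e^{2\pi t})$ term, and that uniformity comes from this cancellation together with the bound on the second derivative, matches the paper's reasoning.
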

\begin{proof}
For any $t \in \C$ with $\im t \in (-1/2 + \delta, -2\delta)$, the function $E(b^2,t)$ has a continuous extension to $b = 0$ such that $E(0,t) = 0$. Thus $E(b^2,t) = \int_0^{b^2} \partial_\tau E(\tau, t) d\tau$, implying that the desired conclusion will follow if we can show that $|\partial_\tau E(\tau, t)|$ is uniformly bounded for all sufficiently small $\tau > 0$ and all $t$ in the strip $\im t \in (-1/2 + \delta, -2\delta)$.

We have
\begin{align}\label{partialtauE}
\partial_\tau E(\tau, t) = \frac{E_1(\tau, t)}{2\pi i \tau^2}  + \frac{\pi i E_2(\tau, t)}{12},
\end{align}
where
\begin{align*}  
& E_1(\tau, t) := -\text{Li}_2\left(e^{2 \pi  t+i \pi  \tau }\right)+\text{Li}_2\left(e^{2 \pi t}\right)-i \pi  \tau  \ln \left(1-e^{2 \pi  t+i \pi  \tau }\right),
   	\\
& E_2(\tau, t) := 2-\frac{1}{e^{-2 \pi  \zeta }+1}-\frac{1}{e^{2 \pi  (\zeta- t) }+1}+\frac{e^{2 \pi  t+i \pi  \tau } \left(e^{2 \pi  t+i \pi  \tau }-i \pi \tau -1\right)}{\left(e^{2 \pi  t+i \pi  \tau } - 1\right)^2}.
\end{align*}
It is easy to see that the function $E_2(\tau, t)$ remains bounded for all $t$ in the strip $\im t \in (-1/2 + \delta, -2\delta)$ whenever $\tau > 0$ is small enough.
Also, the definition (\ref{dilogdef}) of $\Li_2$ gives
\begin{align*}  
& E_1(\tau, t) = \int_{e^{2 \pi  t}}^{e^{2 \pi  t+i \pi  \tau }} \frac{\ln(1-u)}{u} du  -i \pi  \tau  \ln\left(1-e^{2 \pi  t+i \pi  \tau }\right).
\end{align*}
We see that for any $t \in \C$ with $\im t \in (-1/2 + \delta, -2\delta)$, the function $E_1(\tau,t)$ has a continuous extension to $\tau = 0$ such that $E_1(0,t) = 0$. Thus 
$$E_1(\tau,t) = \int_0^{\tau} \partial_{\tau'} E_1(\tau', t) d\tau' 
= \int_0^{\tau} \frac{\pi ^2 \tau'}{1-e^{-2 \pi  t-i \pi  \tau'}} d\tau',$$
and so
$$|E_1(\tau,t)| \leq C \int_0^{\tau} \tau' d\tau' = \frac{C}{2} \tau^2,$$
for all sufficiently small $\tau > 0$ and all $t$ in the strip $\im t \in (-1/2 + \delta, -2\delta)$.
In light of (\ref{partialtauE}), this shows that $|\partial_\tau E(\tau, t)|$ is uniformly bounded for all small $\tau > 0$ and all $t$ in the strip $\im t \in (-1/2 + \delta, -2\delta)$.
\end{proof}

Substituting (\ref{calMintegrandexpansion}) into (\ref{calMtintegral}) and using Lemma \ref{Elemma}, we conclude that
\begin{align}\label{calMfg}
\mathcal{M}\bigg(b, \frac{\zeta}{b}, \frac{\omega}{b}\bigg) 
=&\; \frac{e^{\frac{\pi i}{4}}}{b} \int_{b\mathsf{M}} e^{\frac{f(t)}{b^2}} g(t) dt \big(1 + O(b^2)\big) \qquad \text{as $b \to 0^+$}.
\end{align}

The next lemma shows that the integrand in (\ref{calMfg}) has exponential decay as $t \in b\mathsf{M}$ tends to infinity.

\begin{lemma}\label{Mefgatinftylemma}
Let $t = t_1 + it_2$. There are constants $C > 0$ and $c>0$ such that 
\begin{align}\label{Mefgatinfty}
& \big|e^{\frac{f(t)}{b^2}} g(t)\big| \leq C e^{-\frac{c}{b^2}|t_1|}
\end{align}
for all sufficiently large $|t_1|$, for all $t_2 \in (-1/2, 0)$, and for all sufficiently small $b> 0$. 
\end{lemma}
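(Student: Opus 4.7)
The plan is to reduce the lemma to two $b$-independent facts: that $\re f(t) \to -\infty$ linearly in $|t_1|$ as $|t_1| \to \infty$, uniformly in $t_2 \in (-1/2,0)$, and that $|g(t)|$ remains bounded in the same regime. Together they imply $|e^{f(t)/b^2}g(t)| = |g(t)|\,e^{\re f(t)/b^2} \leq C e^{-c|t_1|/b^2}$, with $\sup |g|$ absorbed into $C$ and any $O(1)$ piece of $\re f$ absorbed by taking $|t_1|$ large. Note that the $b$-dependence is trivial once these two facts are in place, since neither $f$ nor $g$ depend on $b$.

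For $t_1 \to +\infty$, I would apply the inversion identity $\Li_2(z) = -\frac{\pi^2}{6} - \frac{1}{2}\ln^2(-z) - \Li_2(1/z)$ (principal branches, valid since the argument avoids $[0,1]$) to the two dilogarithms $\Li_2(-e^{2\pi(t-\zeta)})$ and $\Li_2(-e^{2\pi(t+i/2)}) = \Li_2(e^{2\pi t})$, whose arguments blow up in modulus. For $t_2 \in (-1/2, 0)$ both negative arguments have argument in $(0,\pi)$, so principal $\ln(-z)$ equals $2\pi(t-\zeta)$ and $2\pi t + i\pi$ respectively, and the remainders $\Li_2(1/z)$ are $O(e^{-2\pi t_1})$ uniformly in $t_2$. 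Substituting into the definition of $f$ from \eqref{Mgdef} and simplifying, the $\zeta^2$ terms cancel and $\Li_2(-e^{2\pi\zeta})$ is absorbed into a constant, leaving
\[
f(t) = 2\pi i\,t\,(\zeta+\omega) + c_+(\zeta) + O(e^{-2\pi t_1}),
\]
so $\re f(t) = -2\pi t_1 \im\omega + O(1)$ uniformly in $t_2$. Since $\im \omega > 0$, this gives exponential decay at rate $2\pi \im\omega$.

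For $t_1 \to -\infty$ both $e^{2\pi(t-\zeta)}$ and $e^{2\pi t}$ tend to zero, so the two nontrivial dilogarithms are $O(e^{-2\pi|t_1|})$, and hence
\[
f(t) = (\pi + 2\pi i\omega)\,t + c_-(\zeta) + O(e^{-2\pi|t_1|}),
\]
whose real part is $\pi(1-2\im\omega)\,t_1 + O(1)$; since $\im\omega < 1/2$, the coefficient $1-2\im\omega$ is strictly positive, producing the required exponential decay in $|t_1|$ on this side as well. For the prefactor, $|g(t)|^2 = e^{2\pi t_1}/|1 - e^{2\pi t}|$ tends to $0$ as $t_1 \to -\infty$, to $1$ as $t_1 \to +\infty$ (after factoring $e^{2\pi t}$ from $1-e^{2\pi t}$), and is continuous on any bounded $t_1$-window with $t_2 \in (-1/2,0)$, so $|g|$ is uniformly bounded on the strip for all sufficiently large $|t_1|$.

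The step I expect to be most delicate is the branch bookkeeping in the inversion identity for $\Li_2$ at $t_1 \to +\infty$: verifying that the principal $\ln(-z)$ really coincides with the affine expressions $2\pi(t-\zeta)$ and $2\pi t + i\pi$ uniformly throughout the open strip $t_2 \in (-1/2,0)$, and that the remainder $\Li_2(1/z)$ is $O(e^{-2\pi t_1})$ uniformly up to the endpoints. Once the branches are pinned down correctly, the cancellation of the growing $t^2$ contribution coming from $\frac{1}{2}\ln^2(-z)$ against the explicit polynomial terms $i\pi\zeta^2 + 2i\pi t\omega + \pi t + i\pi/6$ in the definition of $f$ is mechanical, and the bound follows by choosing $c$ smaller than $\min(2\pi\im\omega,\,\pi(1-2\im\omega))$.
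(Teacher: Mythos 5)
Your proof is correct and follows essentially the same strategy as the paper's: compute $\re f(t)$ asymptotically as $t_1 \to \pm\infty$ (via the growth of $\Li_2$ at $\infty$ for $t_1\to+\infty$ and via the smallness of the dilogarithm arguments for $t_1\to-\infty$), check that $|g|$ is uniformly bounded, and absorb the $O(1)$ contributions into the constant $C$. The only presentational difference is that you invoke the precise inversion identity $\Li_2(z) + \Li_2(1/z) = -\tfrac{\pi^2}{6} - \tfrac12\ln^2(-z)$ (giving an exponentially small remainder), whereas the paper cites the coarser consequence $\Li_2(-z) = -\tfrac12(\ln z)^2 + O(1)$ uniformly for $\arg z\in(-\pi,\pi)$, which is all that is needed. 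One small slip in your branch bookkeeping: for $\Li_2\big(-e^{2\pi(t-\zeta)}\big)$, the quantity $-z = e^{2\pi(t-\zeta)}$ has argument $2\pi t_2 \in (-\pi,0)$, not $(0,\pi)$; fortunately this still lies in the principal range, so your conclusion $\ln(-z)=2\pi(t-\zeta)$ remains valid and the subsequent cancellations go through exactly as you describe.
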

\begin{proof}
From (\ref{dilogdef}), we see that
\begin{align}\label{Li2asymptotics}
\Li_2(-z) = -\frac{1}{2}(\ln z)^2 + O(1) \qquad \text{as $z \to \infty$}
\end{align}
uniformly for $\arg(z) \in (-\pi, \pi)$. Hence, using that $\im \Li_{2}(-e^{2\pi \zeta}) = 0$, we obtain 
\begin{align*}
\re f(t)  
& = -\im\left(\frac{-\frac{1}{2}(2\pi (t-\zeta))^2 + \frac{1}{2}(2\pi (t + \frac{i}{2}))^2}{2 \pi }\right) 
- 2\pi t_1 \im \omega +\pi  t_1 + O(1)
	\\
& =  - 2\pi t_1 \im \omega + O(1)
\end{align*}
as $t_1 \to +\infty$ uniformly for $t_2 \in (-1/2, 0)$. Since $\im \omega > 0$ and $|g(t)| = O(1)$ as $t_1 \to +\infty$ uniformly for $t_2 \in (-1/2, 0)$, the estimate for large positive values of $t_1$ follows.

As $t_1 \to -\infty$, we have $\re f(t) = (2\pi \im \omega -\pi ) |t_1| + O(1)$ and hence there is a $c > 0$ such that $\re f(t) < -c|t_1|$ for all sufficiently large negative $t_1$ and all $t_2 \in (-1/2, 0)$.
Since $|g(t)| = O(1)$ as $t_1 \to -\infty$ uniformly for $t_2 \in (-1/2, 0)$, the estimate for large negative values of $t_1$ also follows.
\end{proof}

The first and second derivatives of $f$ are given by
\begin{align}\label{Mdfdt}
f'(t) & =-i \ln \left(1 + e^{2 \pi  (t-\zeta )}\right)+i \ln \left(1- e^{2 \pi t}\right)+2 i \pi  \omega +\pi,
	\\ \label{Md2fdt2}
f''(t) & = \frac{\pi i  \cosh (\pi  \zeta ) }{\sinh(\pi  t) \cosh(\pi 
   (t-\zeta ))}.
\end{align}
In particular, the saddle-point equation $f'(t) = 0$ takes the form
\begin{align}\label{Msaddlepointeq}
\ln \left(1 + e^{2 \pi  (t-\zeta )}\right) - \ln \left(1- e^{2 \pi t}\right) = 2 \pi  \omega - i \pi.
\end{align}

\begin{lemma}\label{Mt0lemma}
The saddle point equation (\ref{Msaddlepointeq}) has a unique solution $t_0 = t_0(\zeta, \omega)$ in the strip $-1/2 < \im t < 0$ given by
\begin{align}\label{Mt0def}
t_0 = \frac{1}{2\pi} \ln\Big(e^{\pi  \zeta } \frac{\cosh (\pi  \omega )}{\sinh(\pi  (\zeta +\omega))}\Big).
\end{align}
\end{lemma}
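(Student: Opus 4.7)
The plan is to reduce the saddle-point equation (\ref{Msaddlepointeq}) to a purely algebraic one by exponentiation, solve it in closed form, and then use the principal branch of the logarithm to pin down the unique representative lying in the strip $-1/2<\im t<0$.

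First I would exponentiate (\ref{Msaddlepointeq}) using $e^{-i\pi}=-1$ to obtain
\begin{align}\label{proposal-exp}
\frac{1+e^{2\pi(t-\zeta)}}{1-e^{2\pi t}} = -e^{2\pi\omega}.
\end{align}
Clearing the denominator, isolating $e^{2\pi t}$, and then factoring $2e^{\pi\omega}\cosh(\pi\omega)$ out of the numerator and $2e^{\pi\omega-\pi\zeta}\sinh(\pi(\omega+\zeta))$ out of the denominator converts this into
\begin{align}\label{proposal-solve}
e^{2\pi t} = \frac{e^{\pi\zeta}\cosh(\pi\omega)}{\sinh(\pi(\omega+\zeta))}.
\end{align}
A short check using $\im\omega\in(0,1/2)$ shows the right-hand side is finite and nonzero (for instance $\im\sinh(\pi(\omega+\zeta))=\cosh(\pi(\re\omega+\zeta))\sin(\pi\im\omega)>0$), so (\ref{proposal-solve}) admits a discrete set of solutions $\{t_0+ik:k\in\Z\}$ by the $i$-periodicity of $t\mapsto e^{2\pi t}$. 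Since the target strip $\im t\in(-1/2,0)$ has width $1/2$, it contains at most one of them, so uniqueness is automatic once existence inside the strip is verified.

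Taking the principal logarithm of (\ref{proposal-solve}) furnishes the candidate $t_0$ of (\ref{Mt0def}), whose imaginary part equals $\frac{1}{2\pi}\Arg$ applied to the right-hand side of (\ref{proposal-solve}) and therefore lies in $(-1/2,1/2]$. To pin it down to $(-1/2,0)$, I would show that the right-hand side of (\ref{proposal-solve}) has strictly negative imaginary part. Since $e^{\pi\zeta}>0$, this reduces to a sign check on $\im\bigl(\cosh(\pi\omega)/\sinh(\pi(\omega+\zeta))\bigr)$; multiplying numerator and denominator by $\overline{\sinh(\pi(\omega+\zeta))}=\sinh(\pi(\bar\omega+\zeta))$ and applying the identity $\cosh A\,\sinh B=\tfrac{1}{2}[\sinh(A+B)+\sinh(B-A)]$ with $A=\pi\omega$ and $B=\pi(\bar\omega+\zeta)$ yields
$$\cosh(\pi\omega)\sinh(\pi(\bar\omega+\zeta))=\tfrac{1}{2}\sinh\bigl(\pi(2\re\omega+\zeta)\bigr)+\tfrac{1}{2}\sinh(\pi\zeta-2\pi i\,\im\omega).$$
The first summand is real, while the second has imaginary part $-\tfrac{1}{2}\cosh(\pi\zeta)\sin(2\pi\im\omega)<0$ thanks to $0<\im\omega<1/2$, which forces $\sin(2\pi\im\omega)>0$.

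The main subtlety---more than a serious obstacle---is branch tracking: one must also confirm that (\ref{Msaddlepointeq}) holds at $t_0$ under the principal-branch convention, not merely modulo $2\pi i$. This follows because for $\im t_0\in(-1/2,0)$ and $\zeta\in\R$ both $e^{2\pi t_0}$ and $e^{2\pi(t_0-\zeta)}$ sit in the open lower half-plane, so $1-e^{2\pi t_0}$ lies in the open upper half-plane and $1+e^{2\pi(t_0-\zeta)}$ lies in the open lower half-plane, each safely avoiding the cut $(-\infty,0]$; the arguments then add up so that the principal logarithms in (\ref{Msaddlepointeq}) reproduce the exponentiated relation (\ref{proposal-exp}) without a spurious $2\pi i$ shift. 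In essence, the positivity of $\sin(2\pi\im\omega)$ provided by the hypothesis $0<\im\omega<1/2$ is precisely what places $\im t_0$ in the lower half of the period strip, yielding both existence and uniqueness of the saddle point in $\im t\in(-1/2,0)$.
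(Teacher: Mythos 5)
Your proposal is correct and follows essentially the same route as the paper: exponentiate to reduce to an algebraic equation for $e^{2\pi t}$, solve to get the explicit candidate, locate its imaginary part in $(-1/2,0)$ via a sign computation, invoke the width of the strip for uniqueness, and then check the arguments of $1-e^{2\pi t_0}$ and $1+e^{2\pi(t_0-\zeta)}$ to rule out spurious $2\pi i$ shifts. The paper performs the same steps (verifying $\im t_0$ by a direct computation of $\im\big(e^{\pi\zeta}\cosh(\pi\omega)/\sinh(\pi(\zeta+\omega))\big)$ and expressing the branch check as determining that the integer $N$ in $2\pi iN$ must vanish), so the two arguments are interchangeable.
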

\begin{proof}
Since
$$\im \bigg(e^{\pi  \zeta } \frac{\cosh (\pi  \omega )}{\sinh(\pi  (\zeta +\omega))}\bigg)
= - \frac{e^{\pi  \zeta } \cosh (\pi  \zeta ) \sin (2 \pi  \im{\omega})}{\cosh (2 \pi  (\zeta + \re{\omega}))- \cos (2 \pi  \im{\omega})} < 0,$$
it follows that $-1/2 < \im t_0 < 0$. 
On the other hand, letting $z = e^{2\pi t}$, the saddle-point equation (\ref{Msaddlepointeq}) is
\begin{align}
\ln(1 + z e^{-2\pi \zeta}) - \ln(1 - z) = 2 \pi  \omega - \pi i.
\end{align}
Exponentiation gives the equation
$$\frac{1 + z e^{-2\pi \zeta}}{1 -z} = -e^{2 \pi  \omega},$$
which has the unique solution
$$z_0 = e^{\pi  \zeta } \frac{\cosh (\pi  \omega )}{\sinh(\pi  (\zeta +\omega))}.$$
It follows that any solution of (\ref{Msaddlepointeq}) has the form $t_0 + in$ for some integer $n$; in particular, there can be at most one solution in the strip $-1/2 < \im t < 0$. It also follows that $t_0$ fulfills the saddle point equation (\ref{Msaddlepointeq}) modulo an integer multiple of $2\pi i$, i.e.,
\begin{align}\label{Msaddlemod}
\ln \left(1 + e^{2 \pi  (t_0 -\zeta )}\right) - \ln \left(1- e^{2 \pi t_0}\right) - 2 \pi  \omega + i \pi = 2\pi i N,
\end{align}
for some $N \in \Z$. Since $\im \omega \in (0,1/2)$,
\begin{align}\label{Mtsaddleimlog}
\im \ln \left(1 + e^{2 \pi  (t_0-\zeta )}\right) \in (-\pi,0), \quad \text{and} \quad
\im \ln \left(1- e^{2 \pi t_0}\right) \in (0,\pi),
\end{align}
we conclude that the imaginary part of the left-hand side of (\ref{Msaddlemod}) lies in the open interval $(-2\pi,\pi)$, so the only possibility is $N = 0$. This shows that $t_0$ satisfies (\ref{Msaddlepointeq}) and concludes the proof.
\end{proof}

We henceforth let $t_0$ be the solution (\ref{Mt0def}) of the saddle point equation. 

\begin{lemma}\label{Mrefprimeprimelemma}
We have
\begin{align}\label{Mrefppnegative}
\re f''(t) < 0 \qquad \text{for all $t$ in the strip $-1/2 < \im t < 0$.}
\end{align}
In particular, $\arg\big(-f''(t_0)\big) \in (-\pi/2, \pi/2)$.
\end{lemma}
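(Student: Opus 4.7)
The plan is to reduce the inequality $\re f''(t) < 0$ to a tractable trigonometric identity and then read off the second assertion as a direct corollary. Write $t = t_1 + it_2$ with $t_2 \in (-1/2, 0)$ and observe that, since $\cosh(\pi \zeta) > 0$ for $\zeta \in \mathbb{R}$, we have
\begin{equation*}
\re f''(t) = \pi \cosh(\pi \zeta) \cdot \frac{\im\bigl(\sinh(\pi t)\cosh(\pi(t-\zeta))\bigr)}{|\sinh(\pi t)\cosh(\pi(t-\zeta))|^2}.
\end{equation*}
Thus it suffices to prove that the imaginary part of $\sinh(\pi t)\cosh(\pi(t-\zeta))$ is strictly negative throughout the strip.

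The next step is a direct computation. Expanding both hyperbolic factors via the real/imaginary decomposition
\begin{align*}
\sinh(\pi t) &= \sinh(\pi t_1)\cos(\pi t_2) + i\cosh(\pi t_1)\sin(\pi t_2), \\
\cosh(\pi(t-\zeta)) &= \cosh(\pi(t_1-\zeta))\cos(\pi t_2) + i\sinh(\pi(t_1-\zeta))\sin(\pi t_2),
\end{align*}
multiplying out, and using the addition formula $\cosh(a+c) = \sinh(a)\sinh(c) + \cosh(a)\cosh(c)$, one finds that the cross terms collapse to
\begin{equation*}
\im\bigl(\sinh(\pi t)\cosh(\pi(t-\zeta))\bigr) = \tfrac{1}{2}\sin(2\pi t_2)\cosh\bigl(\pi(2t_1-\zeta)\bigr).
\end{equation*}
Since $t_2 \in (-1/2,0)$ gives $\sin(2\pi t_2) < 0$ and $\cosh(\pi(2t_1-\zeta)) > 0$ for any real argument, the expression is strictly negative, yielding \eqref{Mrefppnegative}.

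For the second claim, Lemma \ref{Mt0lemma} places the saddle point $t_0$ inside the strip $-1/2 < \im t < 0$, so \eqref{Mrefppnegative} applies at $t_0$ and gives $\re(-f''(t_0)) > 0$, which is exactly $\arg(-f''(t_0)) \in (-\pi/2, \pi/2)$. I expect no serious obstacles here: the entire argument hinges on recognizing the addition-formula cancellation in the product, after which everything is elementary.
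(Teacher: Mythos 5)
Your proposal is correct and takes essentially the same route as the paper: both rest on the observation that after writing $f''(t) = \pi i\cosh(\pi\zeta)/w(t)$ with $w(t)=\sinh(\pi t)\cosh(\pi(t-\zeta))$, the sign of $\re f''(t)$ is controlled by $\sin(2\pi t_2)\cosh(\pi(2t_1-\zeta))$. The paper writes out the full explicit formula for $\re f''(t)$ (numerator and denominator) and reads off the sign, whereas you factor through $\re(i/w)=\im w/|w|^2$ and compute only $\im w$; this is a clean streamlining of the same calculation and reaches the identical conclusion.
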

\begin{proof}
Taking the real part of (\ref{Md2fdt2}), we find, for all $t = t_1 + it_2$ with $-1/2 < t_2 < 0$,
$$\re f''(t) =
\frac{2 \pi  \cosh (\pi  \zeta ) \sin (2 \pi  t_2) \cosh (\pi (2 t_1-\zeta ))}{(\cosh (2 \pi  t_1) - \cos (2 \pi  t_2)) (\cosh (2 \pi  (t_1-\zeta )) + \cos (2 \pi  t_2))} < 0,$$
where the last inequality follows because $\sin(2\pi t_2) < 0$. 
\end{proof}

We define $\alpha_0$ by
\begin{align}\label{Malpha0def}
\alpha_0 = -\frac{\arg(-f''(t_0))}{2}.
\end{align}
In view of Lemma \ref{Mrefprimeprimelemma}, we have $f''(t_0) \neq 0$ and $\alpha_0 \in (-\pi/4,\pi/4)$.
Since $f''(t_0) \neq 0$, the implicit function theorem implies that $\{t \,|\, \im f(t) = \im f(t_0)\}$ consists of two smooth curves in a neighborhood of $t_0$ that cross at right angles at $t_0$: one of these curves is the steepest descent contour with tangent line $t_0 + \R e^{i\alpha_0}$ at $t_0$, and the other one is the steepest ascent contour with tangent line $t_0 + i \R e^{i\alpha_0}$ at $t_0$. We define the function $\varphi$ for $t$ near $t_0$ by
\begin{align}\label{varphidef}
\varphi(t) = |f''(t_0)|^{1/2} e^{-i\alpha_0} (t-t_0)\sqrt{\frac{2(f(t) - f(t_0))}{f''(t_0)(t-t_0)^2}},
\end{align}
where the principal branch is used for the square roots. The map $\varphi$ is an analytic bijection from a neighborhood $U$ of $t_0$ to a neighborhood of $0$ such that
\begin{align}\label{fvarphi}
f(t) - f(t_0) = -\frac{\varphi(t)^2}{2}
\end{align}
and such that $\varphi$ maps $\{t \,|\, \im f(t) = \im f(t_0)\} \cap U$ into $\R \cup i\R$. The steepest descent (resp. ascent) contour restricted to $U$ is mapped into $\R$ (resp. $i\R$). 
Let $D_\epsilon(z)$ denote the open disk of radius $\epsilon$ centered at $z$. 
For all small enough $\epsilon > 0$, the open set $\mathcal{D}_\epsilon := \varphi^{-1}(D_\epsilon(0))$ is contained in $U$. Shrinking $\epsilon > 0$ if necessary, we may assume that $\re \varphi^{-1}(x)$ is an increasing function of $x \in [-\epsilon, \epsilon]$, and that the steepest descent contour intersects $\partial \mathcal{D}_\epsilon$ in the two points $\varphi^{-1}(-\epsilon)$ and $\varphi^{-1}(\epsilon)$.
We define the contour $\Gamma = \Gamma_1 \cup \Gamma_2 \cup \Gamma_3$ as the union of the horizontal rays 
$$\Gamma_1 = \varphi^{-1}(-\epsilon) + \R_{\leq 0} \quad \text{and} \quad
\Gamma_3 = \varphi^{-1}(\epsilon) + \R_{\geq 0},$$
and the steepest descent curve $\Gamma_2 = \varphi^{-1}([-\epsilon, \epsilon])$, see Figure \ref{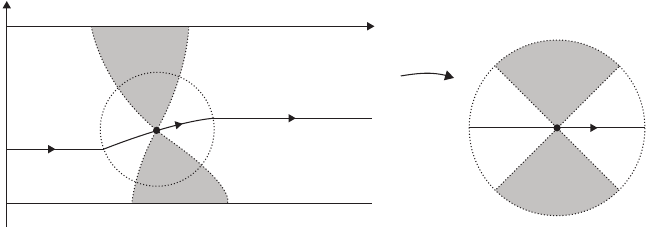}.

\begin{figure}
\bigskip\begin{center}
\hspace{-.4cm}
\begin{overpic}[width=0.9\textwidth]{steepest.pdf}
      \put(58.5,30.5){\footnotesize $\re t $}
      \put(-1,36){\footnotesize $\im t$}
      \put(-.9,30.5){\footnotesize $0$}
      \put(-3,3.2){\footnotesize $-\frac{1}{2}$}
      \put(7,9.5){\footnotesize $\Gamma_1$}
      \put(28,13.9){\footnotesize $\Gamma_2$}
      \put(44.2,14.3){\footnotesize $\Gamma_3$}
      \put(23.8,12.4){\footnotesize $t_0$}
      \put(31,22){\footnotesize $\partial \mathcal{D}_\epsilon$}
      \put(99,22){\footnotesize $\partial D_\epsilon(0)$}
      \put(65,25){\footnotesize $\varphi$}
      \put(85.5,13){\footnotesize $0$}
      \put(100.2,14){\footnotesize $\epsilon$}
      \put(69,14){\footnotesize $-\epsilon$}
    \end{overpic}
     \begin{figuretext}\label{steepest.pdf}
       Schematic illustration of the contour $\Gamma = \Gamma_1 \cup \Gamma_2 \cup \Gamma_3$ in the complex $t$-plane, as well as the map $\varphi$ from $\mathcal{D}_\epsilon$ to $D_\epsilon(0)$. The regions where $\re(f(t) - f(t_0)) > 0$ are shaded. The circle $\partial D_\epsilon(0)$ and its inverse image $\partial \mathcal{D}_\epsilon = \varphi^{-1}(\partial D_\epsilon(0))$ are dotted. The map $\varphi$ maps the steepest descent curve $\Gamma_2$ to the interval $[-\epsilon, \epsilon]$. 
       \end{figuretext}
     \end{center}
\end{figure}

Deforming the contour to $\Gamma$, we may write (\ref{calMfg}) as
\begin{align}\label{calMI1I2I3}
\mathcal{M}\bigg(b, \frac{\zeta}{b}, \frac{\omega}{b}\bigg) 
=&\; \frac{e^{\frac{\pi i}{4}}}{b} e^{\frac{f(t_0)}{b^2}} \big(I_1(b) + I_2(b) + I_3(b)\big) \big(1 + O(b^2)\big),
\end{align}
where the functions $I_j(b)$ are defined by
$$I_j(b) = \int_{\Gamma_j} e^{\frac{f(t) - f(t_0)}{b^2}} g(t) dt, \qquad j = 1,2,3.$$

Our next goal is to show that $I_1(b)$ and $I_3(b)$ are exponentially small in the limit $b \to 0$. To this end, we need the following lemma.

\begin{lemma}\label{Mreflemma}
 Shrinking $\epsilon > 0$ if necessary, the following estimate holds for all $t \in \Gamma_1 \cup \Gamma_3$ for some $c_0 > 0$:
$$\re \big[f(t) - f(t_0)\big] < -c_0.$$
\end{lemma}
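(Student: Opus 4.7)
The plan is to exploit the concavity identity $\partial_{t_1}^2 \re f(t_1 + it_2) = \re f''(t_1 + it_2) < 0$ throughout the strip $-1/2 < \im t < 0$, which holds by Lemma \ref{Mrefprimeprimelemma} combined with the Cauchy-Riemann equations. Thus the restriction of $\re f$ to each horizontal line in the strip is strictly concave in $t_1$. Combined with the fact (readily extracted from the proof of Lemma \ref{Mefgatinftylemma}) that $\re f(t_1 + it_2) \to -\infty$ as $|t_1| \to \infty$ uniformly for $t_2$ in compact subsets of $(-1/2, 0)$, this forces $\re f$ to attain a unique maximum at some point $t_1^*(t_2)$ on each horizontal line and to be strictly monotonic off that maximum.

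The next step is to verify that, for $\epsilon > 0$ sufficiently small, $\varphi^{-1}(\epsilon)$ lies strictly to the right of $t_1^*(\im\varphi^{-1}(\epsilon))$, and symmetrically $\varphi^{-1}(-\epsilon)$ lies strictly to the left of $t_1^*(\im\varphi^{-1}(-\epsilon))$. Granting this, $\re f$ is strictly decreasing along $\Gamma_3$ as $\re t$ increases away from $\re\varphi^{-1}(\epsilon)$ and likewise along $\Gamma_1$ as $\re t$ decreases away from $\re\varphi^{-1}(-\epsilon)$, so its maximum on $\Gamma_1 \cup \Gamma_3$ is attained at the endpoints $\varphi^{-1}(\pm\epsilon)$. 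From the defining relation $f(t) - f(t_0) = -\varphi(t)^2/2$ in \eqref{fvarphi} and $\varphi(\varphi^{-1}(\pm\epsilon)) = \pm\epsilon \in \R$, the value at these endpoints is $\re f(t_0) - \epsilon^2/2$, giving the conclusion with $c_0 = \epsilon^2/2$.

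The main technical hurdle is the comparison $\re\varphi^{-1}(\epsilon) > t_1^*(\im\varphi^{-1}(\epsilon))$ (and its symmetric counterpart). Since $\re f''(t_0) \neq 0$, the implicit function theorem applied to $\re f'(t_1^*(t_2) + it_2) = 0$ yields $(t_1^*)'(\im t_0) = \im f''(t_0)/\re f''(t_0) = -\tan(2\alpha_0)$, where $\alpha_0 \in (-\pi/4, \pi/4)$ by Lemma \ref{Mrefprimeprimelemma}. Meanwhile, inverting $\varphi$ near $t_0$ gives $\varphi^{-1}(\epsilon) = t_0 + \epsilon |f''(t_0)|^{-1/2} e^{i\alpha_0} + O(\epsilon^2)$. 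A short calculation, using $\cos(2\alpha_0) = 1 - 2\sin^2\alpha_0$, then produces
\begin{align*}
\re\varphi^{-1}(\epsilon) - t_1^*(\im\varphi^{-1}(\epsilon)) = \frac{\epsilon\cos\alpha_0}{|f''(t_0)|^{1/2}\cos(2\alpha_0)} + O(\epsilon^2),
\end{align*}
which is strictly positive for small $\epsilon$ since both $\cos\alpha_0$ and $\cos(2\alpha_0)$ are positive on $\alpha_0 \in (-\pi/4, \pi/4)$. The estimate for $\varphi^{-1}(-\epsilon)$ is obtained analogously, completing the argument.
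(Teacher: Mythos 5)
Your proof is correct and follows essentially the same route as the paper's: both use the strict concavity of $\re f$ in $t_1$ (from $\re f''<0$), the implicit function theorem to obtain the critical curve $t_1^*(t_2)$, and a comparison of the steepest-descent direction $e^{i\alpha_0}$ with the tangent $\R\,e^{i(2\alpha_0-\pi/2)}$ of that curve to place $\varphi^{-1}(\pm\epsilon)$ on opposite sides of it. The only difference is that you make the side-determination fully explicit via the first-order expansion $\re\varphi^{-1}(\pm\epsilon)-t_1^*(\im\varphi^{-1}(\pm\epsilon))=\pm\frac{\epsilon\cos\alpha_0}{|f''(t_0)|^{1/2}\cos(2\alpha_0)}+O(\epsilon^2)$, whereas the paper settles it by noting that $\alpha_0-(2\alpha_0-\tfrac{\pi}{2})\in(\tfrac{\pi}{4},\tfrac{3\pi}{4})$.
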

\begin{proof}
By the definition of $\varphi$, we have
$$\re \big[f(\varphi^{-1}(\pm \epsilon)) - f(t_0)\big] < 0$$
for all sufficiently small $\epsilon > 0$. So the desired estimate holds at the finite end points of $\Gamma_1$ and $\Gamma_3$. The lemma will follow if we can show that
\begin{align}\label{Mrefestimate}
\re f(t) \leq \begin{cases} \re f(\varphi^{-1}(-\epsilon)) & \text{for all $t \in \Gamma_1$},
	\\
\re f(\varphi^{-1}(\epsilon)) & \text{for all $t \in \Gamma_3$},
\end{cases}
\end{align}
for all sufficiently small $\epsilon > 0$. 
By (\ref{Mrefppnegative}), we have $\re f''(t) < 0$ everywhere in the strip $-1/2 < \im t < 0$, so for each $t_2 \in (-1/2, 0)$, $\re f(t_1 + it_2)$ is a strictly concave function of $t_1 \in \R$.
Also, for each $t_2 \in (-1/2,0)$, there is a unique solution $t_1$ of the equation $\re f'(t_1 + it_2) = 0$; we denote the solution corresponding to $t_2$ by $\hat{t}_1(t_2)$. By the implicit function theorem, $\hat{t}_1(t_2)$ defines a smooth curve in a neighborhood of $t_2 = \im t_0$ that passes through $t_0$ and whose tangent line at $t_0$ is $t_0 + \R e^{i(2\alpha_0 - \frac{\pi}{2})}$. By concavity, $\re f'(t)  > 0$ if $t$ lies to the left of this curve (i.e., if $t_1 < \hat{t}_1(t_2)$) and $\re f'(t) < 0$ if $t$ lies to the right of this curve (i.e., if $t_1 > \hat{t}_1(t_2)$). Since $\alpha_0 - ( 2\alpha_0 - \frac{\pi}{2}) \in (\pi/4, 3\pi/4)$, we conclude that $\varphi^{-1}(\epsilon)$ lies to the right of the curve, while $\varphi^{-1}(-\epsilon)$ lies to the left of the curve for all sufficiently small $\epsilon > 0$. The estimate (\ref{Mrefestimate}) follows, which completes the proof.
\end{proof}

We are now ready to show that $I_1(b)$ and $I_3(b)$ are exponentially small as $b \to 0^+$.

\begin{lemma}\label{MI1I3lemma}
There exists a $c > 0$ such that
$$I_1(b) = O(e^{-\frac{c}{b^2}}) \quad \text{and} \quad I_3(b) = O(e^{-\frac{c}{b^2}}) \qquad \text{as $b \to 0^+$}.$$
\end{lemma}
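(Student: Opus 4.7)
The plan is to split each unbounded ray $\Gamma_j$ ($j = 1, 3$) into a compact piece attached to the finite endpoint and a horizontal tail extending to infinity, and then estimate the two contributions by different means. Concretely, I would fix a large $R > 0$ (to be chosen at the end) and decompose $\Gamma_3 = \Gamma_3^{\mathrm{bd}} \cup \Gamma_3^{\mathrm{tail}}$ where $\Gamma_3^{\mathrm{bd}} = \{\varphi^{-1}(\epsilon) + s : s \in [0, R]\}$ and $\Gamma_3^{\mathrm{tail}} = \{\varphi^{-1}(\epsilon) + s : s \in [R, \infty)\}$; the ray $\Gamma_1$ is decomposed analogously with $s \in [-R, 0]$ and $s \in (-\infty, -R]$.

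On the compact piece, Lemma \ref{Mreflemma} supplies the uniform bound $\re[f(t) - f(t_0)] < -c_0$, and $|g|$ is bounded on $\Gamma_j^{\mathrm{bd}}$ by continuity. Combining these gives
\[
\biggl|\int_{\Gamma_j^{\mathrm{bd}}} e^{(f(t) - f(t_0))/b^2} g(t)\, dt\biggr| \leq C R\, e^{-c_0/b^2} = O(e^{-c_0/b^2}) \qquad \text{as } b \to 0^+.
\]
On the tail $\Gamma_j^{\mathrm{tail}}$ (with $R$ large enough that the "sufficiently large $|t_1|$" hypothesis of Lemma \ref{Mefgatinftylemma} is in force), I would parametrize by $t_1 = \re t$ and estimate
\[
\biggl|\int_{\Gamma_j^{\mathrm{tail}}} e^{(f(t) - f(t_0))/b^2} g(t)\, dt\biggr|
\leq e^{-\re f(t_0)/b^2} \int_R^{\infty} C\, e^{-c s/b^2}\, ds = \frac{C b^2}{c}\, e^{-(cR + \re f(t_0))/b^2}.
\]

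The main obstacle is the factor $e^{-\re f(t_0)/b^2}$, which arises because Lemma \ref{Mefgatinftylemma} controls $|e^{f(t)/b^2} g(t)|$ rather than $|e^{(f(t)-f(t_0))/b^2} g(t)|$ and would be exponentially large as $b \to 0^+$ if $\re f(t_0) < 0$. I would absorb this factor by choosing $R$ large enough that $cR + \re f(t_0) \geq c'$ for some fixed $c' > 0$, which is always possible regardless of the sign of $\re f(t_0)$ since $c$ and $\re f(t_0)$ do not depend on $R$. Taking $\tilde c := \min(c_0, c') > 0$ and adding the two contributions then yields $I_j(b) = O(e^{-\tilde c / b^2})$ for $j = 1, 3$, as required.
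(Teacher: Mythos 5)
Your argument is correct and is essentially the paper's own proof: both split each ray into a compact piece near the finite endpoint (controlled by Lemma \ref{Mreflemma}) and an unbounded tail (controlled by the exponential decay of Lemma \ref{Mefgatinftylemma}), and both absorb the problematic factor $e^{-\re f(t_0)/b^2}$ by pushing the cutoff far enough out. The only cosmetic difference is that the paper cuts at $|\re t| = M$ while you cut at arclength $R$ from $\varphi^{-1}(\pm\epsilon)$, which just introduces a harmless constant shift in the exponent that your choice of $R$ already accommodates.
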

\begin{proof}
By Lemma \ref{Mefgatinftylemma}, there are constants $M > 0$, $C > 0$, and $c> 0$ such that, for all sufficiently small $b > 0$,
$$\bigg|\int_{\Gamma \cap \{|\re t| > M\}} e^{\frac{f(t) - f(t_0)}{b^2}} g(t) dt\bigg|
\leq C e^{-\frac{\re f(t_0)}{b^2}} \int_M^{+\infty} e^{-\frac{c}{b^2}t_1} dt_1 
\leq C e^{-\frac{\re f(t_0)}{b^2}}  e^{-\frac{c M}{b^2}}.
$$
By choosing $M$ so large that $c M > |\re f(t_0)|$, we see that the contribution from $\Gamma \cap \{|\re t| > M\}$ is of order $O(e^{-\frac{c}{b^2}})$ for some $c > 0$.
On the other hand, we know from Lemma \ref{Mreflemma} that $\re \big[f(t) - f(t_0)\big] < -c_0$ for $t \in \Gamma_1 \cup \Gamma_3$, and hence
$$\bigg|\int_{(\Gamma_1 \cup \Gamma_3) \cap \{|\re t| \leq M\}} e^{\frac{f(t) - f(t_0)}{b^2}} g(t) dt\bigg|
\leq 2M e^{-\frac{c_0}{b^2}} \sup_{t \in \Gamma_1 \cup \Gamma_3} |g(t)| \leq C e^{-\frac{c_0}{b^2}},$$
so the desired conclusion follows.
\end{proof}

The next lemma determines the asymptotics of $I_2(b)$.

\begin{lemma}\label{MI2lemma}
As $b \to 0^+$, it holds that
$$I_2(b) = \frac{g(t_0) \sqrt{2\pi}  }{ |f''(t_0)|^{1/2} e^{-i\alpha_0} }b
+ O(b^2).$$
\end{lemma}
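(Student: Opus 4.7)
\emph{Plan of proof.} The strategy is the standard steepest descent change of variables dictated by $\varphi$, followed by a Gaussian rescaling and a Taylor expansion argument. The estimate $O(b^2)$ we need is weaker than what this method actually produces, so there is no real obstacle beyond bookkeeping.

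First, I would use the bijection $\varphi : \mathcal{D}_\epsilon \to D_\epsilon(0)$ from (\ref{varphidef}) to change variables. Setting $u = \varphi(t)$ on $\Gamma_2 = \varphi^{-1}([-\epsilon,\epsilon])$, the identity (\ref{fvarphi}) gives
\begin{equation*}
I_2(b) = \int_{-\epsilon}^{\epsilon} e^{-\frac{u^2}{2b^2}} h(u)\, du,
\qquad h(u) := g(\varphi^{-1}(u))\,(\varphi^{-1})'(u).
\end{equation*}
Since $\varphi$ is analytic and $\varphi'(t_0) \neq 0$, the function $h$ is analytic in a neighborhood of $0$. Differentiating the identity $\varphi(t_0)=0$ and using
$$\varphi'(t_0) = |f''(t_0)|^{1/2} e^{-i\alpha_0},$$
which follows from (\ref{varphidef}) by letting $t\to t_0$, I obtain
\begin{equation*}
h(0) = \frac{g(t_0)}{\varphi'(t_0)} = \frac{g(t_0)}{|f''(t_0)|^{1/2} e^{-i\alpha_0}}.
\end{equation*}

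Next, rescale by $u = bv$, yielding
\begin{equation*}
I_2(b) = b \int_{-\epsilon/b}^{\epsilon/b} e^{-v^2/2} h(bv)\, dv.
\end{equation*}
Because $h$ is analytic near $0$, Taylor's theorem gives $h(bv) = h(0) + bv\, h'(0) + R(bv)$ with $|R(bv)| \leq C (bv)^2$ uniformly for $|bv| \leq \epsilon$. The term with $h(0)$ contributes $b\, h(0) \int_{-\epsilon/b}^{\epsilon/b} e^{-v^2/2}\,dv = b\, h(0) \sqrt{2\pi} + O(e^{-c/b^2})$, the linear term $b \cdot b\, h'(0) \int_{-\epsilon/b}^{\epsilon/b} v\, e^{-v^2/2}\, dv$ vanishes by the symmetry of the interval and the oddness of the integrand (again up to exponentially small error), and the remainder is bounded by
\begin{equation*}
C b \int_{-\epsilon/b}^{\epsilon/b} (bv)^2 e^{-v^2/2}\, dv \leq C b^3 \int_{\R} v^2 e^{-v^2/2}\, dv = O(b^3).
\end{equation*}
Combining these contributions gives
\begin{equation*}
I_2(b) = \frac{g(t_0) \sqrt{2\pi}}{|f''(t_0)|^{1/2} e^{-i\alpha_0}}\, b + O(b^3),
\end{equation*}
which is stronger than the $O(b^2)$ error claimed in the lemma.

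The only subtle point is verifying that $\varphi$ is genuinely a biholomorphism on a full neighborhood of $t_0$ (so that $h$ is analytic on $|u|\leq \epsilon$ for some $\epsilon>0$); this follows from $f''(t_0) \neq 0$, which has already been established in Lemma \ref{Mrefprimeprimelemma}. Everything else is straightforward Laplace-type asymptotics, and no new analytic input is required.
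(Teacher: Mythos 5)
Your proof is correct and follows essentially the same change-of-variables argument as the paper (substitute $u = \varphi(t)$, reduce to a Gaussian integral, and evaluate the Jacobian at $t_0$). The only difference is that you carry the Taylor expansion of $h$ one order further and exploit the oddness of the linear term to obtain the sharper bound $O(b^3)$, whereas the paper simply bounds $|h(u)-h(0)| = O(|u|)$ and integrates $|u|\,e^{-u^2/(2b^2)}$ to get $O(b^2)$, which is all that is needed.
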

\begin{proof}
Performing the change of variables $u = \varphi(t)$, we obtain
\begin{align*}
I_2(b) & = \int_{-\epsilon}^{\epsilon} e^{-\frac{u^2}{2 b^2}} g(\varphi^{-1}(u)) \frac{du}{\varphi'(\varphi^{-1}(u))}
 = \frac{g(t_0)}{\varphi'(t_0)}  \int_{-\epsilon}^{\epsilon} e^{-\frac{u^2}{2 b^2}} du
+ O\bigg(\int_{-\epsilon}^{\epsilon} e^{-\frac{u^2}{2 b^2}} |u| du\bigg)
	\\
& = \frac{g(t_0)}{ |f''(t_0)|^{1/2} e^{-i\alpha_0} } \int_{-\epsilon}^{\epsilon} e^{-\frac{u^2}{2 b^2}} du
+ O(b^2)
 = \frac{g(t_0) \sqrt{2\pi}  }{ |f''(t_0)|^{1/2} e^{-i\alpha_0} }b
+ O(b^2) \qquad \text{as $b \to 0^+$},
\end{align*}
which is the desired conclusion.
\end{proof}

Employing Lemma \ref{MI1I3lemma} and Lemma \ref{MI2lemma} in (\ref{calMI1I2I3}), we arrive at
\begin{align}\label{calMasymptotics}
\mathcal{M}\bigg(b, \frac{\zeta}{b}, \frac{\omega}{b}\bigg) 
=&\; \frac{e^{\frac{\pi i}{4}} g(t_0) \sqrt{2\pi}  }{ |f''(t_0)|^{1/2} e^{-i\alpha_0} } e^{\frac{f(t_0)}{b^2}} \big(1 + O(b)\big) \qquad \text{as $b \to 0^+$}.
\end{align}
It is possible to simplify the coefficient in (\ref{calMasymptotics}) as the next lemma shows. 

\begin{lemma}\label{Megfsimplifiedlemma}
We have
\begin{align}\label{Megfsimplified}
\frac{e^{\frac{\pi i}{4}} g(t_0) \sqrt{2\pi}  }{ |f''(t_0)|^{1/2} e^{-i\alpha_0} } = \sqrt{\frac{\coth (\pi  (\zeta +\omega ))+1}{2 }},
\end{align}
where the principal branch is used for the square root. 
\end{lemma}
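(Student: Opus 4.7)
The plan is to square both sides of (\ref{Megfsimplified}), verify the resulting algebraic identity by a direct saddle-point computation, and then fix the square-root branch by a continuity argument.

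First I identify the denominator on the left. Lemma \ref{Mrefprimeprimelemma} gives $\arg(-f''(t_0)) \in (-\pi/2, \pi/2)$, so the definition (\ref{Malpha0def}) of $\alpha_0$ yields
$$|f''(t_0)|^{1/2} e^{-i\alpha_0} = |{-f''(t_0)}|^{1/2} e^{i\arg(-f''(t_0))/2} = (-f''(t_0))^{1/2}$$
with the principal branch. The square of the left-hand side of (\ref{Megfsimplified}) therefore equals $2\pi i\, g(t_0)^2/(-f''(t_0))$, and since $(\coth x + 1)/2 = e^x/(2\sinh x)$, the identity to be verified (after squaring) reads
$$\frac{2\pi i\, g(t_0)^2}{-f''(t_0)} = \frac{e^{\pi(\zeta+\omega)}}{2\sinh(\pi(\zeta+\omega))}.$$

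Next I compute the numerator and denominator in closed form. Writing $z_0 := e^{2\pi t_0} = e^{\pi\zeta}\cosh(\pi\omega)/\sinh(\pi(\zeta+\omega))$ (Lemma \ref{Mt0lemma}), the definition (\ref{Mgdef}) gives $g(t_0)^2 = z_0/(1-z_0)$, and a short hyperbolic manipulation produces $1-z_0 = -e^{-\pi\omega}\cosh(\pi\zeta)/\sinh(\pi(\zeta+\omega))$, so
$$g(t_0)^2 = -\frac{e^{\pi(\zeta+\omega)}\cosh(\pi\omega)}{\cosh(\pi\zeta)}.$$
For $f''(t_0)$, I use (\ref{Md2fdt2}) together with $\sinh(\pi t_0) = (z_0-1)/(2 e^{\pi t_0})$ and $\cosh(\pi(t_0-\zeta))$ expanded via the addition formula; after substituting $z_0$ and simplifying with $1 + e^{2\pi\zeta} = 2e^{\pi\zeta}\cosh(\pi\zeta)$, the product $\sinh(\pi t_0)\cosh(\pi(t_0-\zeta))$ collapses to $e^{\pi\zeta}\cosh^2(\pi\zeta)/(4 z_0 \sinh^2(\pi(\zeta+\omega)))$, whence
$$-f''(t_0) = -\frac{4\pi i\cosh(\pi\omega)\sinh(\pi(\zeta+\omega))}{\cosh(\pi\zeta)}.$$
Inserting both expressions, the ratio collapses to the required right-hand side, establishing the identity at the level of squares.

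Finally, to fix the sign of the square root I argue by continuity: both sides of (\ref{Megfsimplified}) are continuous nowhere-vanishing functions on the connected domain $\{\zeta\in\R,\, 0<\im\omega<1/2\}$, and $1/(1-e^{-2\pi(\zeta+\omega)}) = (\coth(\pi(\zeta+\omega))+1)/2$ has strictly negative imaginary part there (because the imaginary part of $e^{-2\pi(\zeta+\omega)}$ is strictly negative), so its principal square root is well-defined and continuous. Since the two principal square roots agree up to sign by the computation above, evaluation at a single convenient point (e.g.\ $\zeta=0$, $\omega=i/4$, where both sides reduce to $2^{-1/4}e^{-i\pi/8}$) pins down the correct sign. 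The main obstacle is precisely this branch bookkeeping: the logarithm implicit in $g(t_0)$, the choice of $e^{\pi t_0}$ as a square root of $z_0$, and the two principal square roots in the final formula must all be reconciled; once that is done, the underlying identity is an elementary hyperbolic computation.
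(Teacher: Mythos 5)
Your proof is correct and the algebraic core coincides with the paper's: both proofs identify $|f''(t_0)|^{1/2}e^{-i\alpha_0}$ with $\sqrt{-f''(t_0)}$ (principal branch, valid by Lemma \ref{Mrefprimeprimelemma}), compute $g(t_0)^2$ and $f''(t_0)$ from the explicit value of $z_0 = e^{2\pi t_0}$, and verify the squared identity $\bigl(\text{LHS}\bigr)^2 = \frac{\coth(\pi(\zeta+\omega))+1}{2}$. Your intermediate expressions for $1-z_0$, $g(t_0)^2$, and $f''(t_0)$ all check out, and the point evaluation at $(\zeta,\omega)=(0,i/4)$ gives $2^{-1/4}e^{-i\pi/8}$ on both sides, as you claim.

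Where you genuinely depart from the paper is in the branch-fixing step. The paper shows directly that $\arg g(t_0)\in(-\pi/2,0)$ by a monotonicity argument in $t_1$, combines this with $\alpha_0\in(-\pi/4,\pi/4)$ to conclude $\arg(\text{LHS})\in(-\pi/2,\pi/2)$, and invokes the range of the principal square root. You instead observe that $\text{LHS}/\text{RHS}$ is a continuous function on the connected domain $\{\zeta\in\R,\, 0<\im\omega<1/2\}$ taking values in $\{\pm1\}$, hence constant, and pin it down by a single evaluation. Your route is cleaner and shorter, at the cost of relying on continuity of both sides, which you assert without spelling out. That assertion is justified by what precedes: $z_0$ depends continuously on $(\zeta,\omega)$, $1-z_0$ stays in the upper half-plane (so $\ln(1-e^{2\pi t_0})$ in the definition of $g$ is single-valued and continuous), $f''(t_0)\neq 0$ with $\arg(-f''(t_0))\in(-\pi/2,\pi/2)$ (Lemma \ref{Mrefprimeprimelemma}) makes $\alpha_0$ continuous, and $(\coth(\pi(\zeta+\omega))+1)/2$ avoids $(-\infty,0]$ by the negativity of its imaginary part which you check. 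It would be worth one sentence to say this explicitly, since continuity is the crux of your sign argument; once said, the proof is complete and arguably preferable to the paper's explicit $\arg g$ computation, which requires a separate monotonicity lemma about $\arg g(t_1+it_2)$ as a function of $t_1$.
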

\begin{proof}
At the saddle point $t_0$, a direct calculation shows that
\begin{align*}
& f''(t_0) = 4 i \pi  \frac{\cosh (\pi  \omega )}{\cosh(\pi  \zeta ) } \sinh (\pi   (\zeta +\omega )).
\end{align*}
Hence, recalling the definition (\ref{Malpha0def}) of $\alpha_0$ and using that $\arg\big(-f''(t_0)\big) \in (-\pi/2, \pi/2)$ by Lemma \ref{Mrefprimeprimelemma}, we deduce that
$$|f''(t_0)|^{1/2} e^{-i\alpha_0} = \sqrt{-f''(t_0)} = \sqrt{-4 i \pi  \frac{\cosh (\pi  \omega )}{\cosh(\pi  \zeta ) } \sinh (\pi   (\zeta +\omega ))},$$
where the principal branch is used for the square roots. 
On the other hand, substituting the expression (\ref{Mt0def}) for $t_0$ into the definition (\ref{Mgdef}) of $g$, we obtain 
$$g(t_0) = \frac{\sqrt{e^{\pi  \zeta } \cosh (\pi  \omega ) \text{csch}(\pi   (\zeta +\omega ))}}{\sqrt{1-e^{\pi  \zeta } \cosh (\pi  \omega ) \text{csch}(\pi  (\zeta +\omega ))}},$$
where the principal branch is used for the square roots. It follows that
$$\bigg(\frac{e^{\frac{\pi i}{4}} g(t_0) \sqrt{2\pi}  }{ |f''(t_0)|^{1/2} e^{-i\alpha_0} }\bigg)^2 = \frac{\coth (\pi  (\zeta +\omega ))+1}{2},$$
showing that (\ref{Megfsimplified}) holds at least up to sign. To fix the sign, note that the definition (\ref{Mgdef}) of $g$ implies that
$$\arg g(t) = \pi t_2 - \frac{1}{2} \arg (-i(1-e^{2 \pi  t})) - \frac{\pi}{4}
= \pi t_2 - \frac{1}{2} \arctan\Big(\frac{-e^{-2\pi t_1} + \cos(2\pi t_2)}{-\sin(2\pi t_2)}\Big) - \frac{\pi}{4}$$
Hence, for $t = t_1 + it_2$ with $-1/2 < t_2 < 0$, we have
$$\frac{d \arg g(t)}{dt_1} = \frac{\pi  \sin (2 \pi  t_2)}{2 (\cosh
   (2 \pi  t_1) - \cos (2 \pi  t_2))} < 0 \qquad \text{for all $t_1 \in \R$}.$$
Since $\lim_{t_1 \to -\infty} \arg g(t) = \pi t_2 \in (-\pi/2, 0)$ and $\lim_{t_1 \to +\infty} \arg g(t) = -\pi/2$, we conclude that $\arg g(t) \in (-\pi/2, 0)$ for all $t$ in the strip $-1/2 < \im t < 0$; in particular, $\arg g(t_0) \in (-\pi/2, 0)$. Since $\alpha_0 \in (-\pi/4, \pi/4)$, it follows that
$$\arg \frac{e^{\frac{\pi i}{4}} g(t_0) \sqrt{2\pi}  }{ |f''(t_0)|^{1/2} e^{-i\alpha_0} } \in \Big(-\frac{\pi}{2}, \frac{\pi}{2}\Big),$$
showing that (\ref{Megfsimplified}) holds when the principal branch is used for the square root.  
\end{proof}

The function $f_{\mathcal{M}}(\zeta, \omega)$ defined by $f_{\mathcal{M}}(\zeta, \omega) := f(t_0; \zeta, \omega)$ can be written as in (\ref{finthmM}). Consequently, the asymptotic formula (\ref{Mfinalasymptotics}) follows from (\ref{calMasymptotics}) and Lemma \ref{Megfsimplifiedlemma}. This completes the proof of \eqref{Mfinalasymptotics}.

\subsection{$f_\mathcal{M}$ and the Painlev\'e I generating function}
We now turn to the proof of the second and last statement of Theorem \ref{Mth} which relates $f_\mathcal{M}$ to the Painlev\'e I generating function of Definition \ref{defgeneratingPI}. The key to the proof is the following.

\begin{proposition} \label{prop:diffeqforf}
For any $\zeta \in \R$ and any $\omega \in \C$ such that $0 < \im \omega < 1/2$, the function $f_\mathcal{M}$ defined in \eqref{finthmM} satisfies the following differential equations:
\begin{align}
\label{diffeqfzeta} & \frac{\partial f_{\mathcal{M}}(\zeta,\omega)}{\partial \zeta} = 2 i \pi (\zeta + \omega) +\pi - i \ln\lb 1-e^{2\pi(\zeta+\omega)}\rb, \\
\label{diffeqfomega} & \frac{\partial f_{\mathcal{M}}(\zeta,\omega)}{\partial \omega} = \pi + 2i\pi \zeta -i \ln \lb \frac{1-e^{2\pi(\zeta+\omega)}}{1+e^{2\pi \omega}} \rb.
\end{align}
\end{proposition}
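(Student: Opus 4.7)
The plan is to exploit the envelope identity
\begin{align*}
\frac{\partial f_{\mathcal{M}}}{\partial \zeta}(\zeta,\omega) = \frac{\partial f}{\partial \zeta}(t_0;\zeta,\omega), \qquad \frac{\partial f_{\mathcal{M}}}{\partial \omega}(\zeta,\omega) = \frac{\partial f}{\partial \omega}(t_0;\zeta,\omega),
\end{align*}
which follows from the identification $f_{\mathcal{M}}(\zeta,\omega) = f(t_0(\zeta,\omega);\zeta,\omega)$ combined with the saddle-point relation $f'(t_0;\zeta,\omega) = 0$. This bypasses having to differentiate $t_0$ explicitly and avoids a long chain rule on \eqref{finthmM}.

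A direct differentiation of the formula \eqref{Mgdef} for $f$, using $\Li_2'(z) = -z^{-1}\ln(1-z)$, gives
\begin{align*}
\frac{\partial f}{\partial\zeta}(t;\zeta,\omega) = 2i\pi\zeta - i\ln(1+e^{2\pi\zeta}) + i\ln(1+e^{2\pi(t-\zeta)}), \qquad \frac{\partial f}{\partial\omega}(t;\zeta,\omega) = 2i\pi t.
\end{align*}
For \eqref{diffeqfomega}, I plan to substitute $t = t_0$ from \eqref{Mt0def} and rewrite the argument of the resulting logarithm, using $2\cosh(\pi\omega) = e^{-\pi\omega}(1+e^{2\pi\omega})$ and $2\sinh(\pi(\zeta+\omega)) = e^{-\pi(\zeta+\omega)}(e^{2\pi(\zeta+\omega)}-1)$, as
\begin{align*}
e^{\pi\zeta}\frac{\cosh(\pi\omega)}{\sinh(\pi(\zeta+\omega))} = -\frac{e^{2\pi\zeta}(1+e^{2\pi\omega})}{1-e^{2\pi(\zeta+\omega)}},
\end{align*}
and take the principal logarithm. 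For \eqref{diffeqfzeta}, I would first use the saddle-point equation \eqref{Msaddlepointeq} (whose principal-branch validity is guaranteed by Lemma \ref{Mt0lemma}) to replace $i\ln(1+e^{2\pi(t_0-\zeta)})$ by $i\ln(1-e^{2\pi t_0}) + 2i\pi\omega + \pi$. A short computation from \eqref{Mt0def} together with $1+e^{2\pi\zeta} = 2e^{\pi\zeta}\cosh(\pi\zeta)$ then yields
\begin{align*}
\frac{1-e^{2\pi t_0}}{1+e^{2\pi\zeta}} = \frac{1}{1-e^{2\pi(\zeta+\omega)}},
\end{align*}
from which \eqref{diffeqfzeta} follows.

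The main technical obstacle is principal-branch tracking: each factor of $-1$ absorbed into a logarithm contributes $\pm i\pi$, and these shifts must combine correctly. Using the hypothesis $0 < \im\omega < 1/2$, one sees that $1+e^{2\pi\omega}$ lies in the open upper half-plane while $1-e^{2\pi(\zeta+\omega)}$ lies in the open lower half-plane; combining this with Lemma \ref{Mt0lemma} (so that $\arg e^{2\pi t_0} \in (-\pi,0)$ and hence $1-e^{2\pi t_0}$ is in the upper half-plane), a direct verification shows that the imaginary parts of the relevant sums of logarithms always lie in $(-\pi,\pi]$, so that no spurious $2\pi i$ shifts arise and the principal branches on the two sides of \eqref{diffeqfzeta}--\eqref{diffeqfomega} match exactly.
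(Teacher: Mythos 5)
Your proposal is correct, and it takes a genuinely different (and arguably cleaner) route than the paper. The paper differentiates the explicit formula \eqref{finthmM} for $f_{\mathcal{M}}$ directly, which entails a chain-rule computation through dilogarithms whose arguments depend jointly on $\zeta$ and $\omega$. You instead exploit the stationary-phase structure established earlier in Section \ref{section4}, namely $f_{\mathcal{M}}(\zeta,\omega) = f(t_0(\zeta,\omega);\zeta,\omega)$ with $\partial_t f(t_0;\zeta,\omega)=0$, so that the $t_0$-dependence drops out by the envelope identity. The payoff is that the partial derivatives $\partial_\zeta f$ and $\partial_\omega f$ at \emph{fixed} $t$ are trivial to compute from the display containing \eqref{Mgdef}, and the remaining work is just substituting the closed form \eqref{Mt0def} of $t_0$ and the saddle relation \eqref{Msaddlepointeq}, together with branch bookkeeping. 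Your branch-tracking claims check out: since $\zeta\in\R$ and $\im\omega\in(0,1/2)$, one has $\arg(1+e^{2\pi\omega})\in(0,2\pi\im\omega)$ and $\arg(1-e^{2\pi(\zeta+\omega)})\in(2\pi\im\omega-\pi,0)$, so the argument of the quotient lies strictly in $(-\pi,0)$; combined with $\im\ln(1-e^{2\pi t_0})\in(0,\pi)$ from \eqref{Mtsaddleimlog}, both sides of your intermediate identities have imaginary parts confined to an interval of length $\pi$, ruling out any $2\pi i$ discrepancy. The one thing you leave implicit is that the envelope identity uses differentiability of $t_0(\zeta,\omega)$, which is immediate from \eqref{Mt0def}; stating this would make the argument fully airtight. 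Overall, your route is a legitimate alternative that replaces a lengthy direct differentiation with a structural shortcut plus a careful but short branch check.
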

\begin{proof}
The proofs of \eqref{diffeqfzeta} and \eqref{diffeqfomega} readily follow by differentiating \eqref{finthmM} utilizing the derivative formula
\begin{equation}
\frac{d \Li_2(z)}{dz} = - \frac{\ln(1-z)}{z},
\end{equation}
together with the fact that $\zeta \in \R$ and $0 < \im \omega < 1/2$.
\end{proof}

If $\zeta \in -\frac{1}{2} + i\R$ and $\omega$ belongs to the strip $\re \omega \in (1/2, 1)$, then $i\zeta + \frac{i}{2} \in \R$ and $i\omega-\frac{i}{2}$ belongs to the strip $0 < \im \omega < 1/2$. Using Proposition \ref{prop:diffeqforf}, it is therefore a routine calculation to show that the function $Y$ defined in \eqref{identificationPIgenfunc} satisfies \eqref{diffeqWzeta} and \eqref{diffeqWomega} for $\zeta \in -\frac{1}{2} + i\R$ and $\omega$ in the strip $\re \omega \in (1/2, 1)$; by analyticity, the relations \eqref{diffeqWzeta} and \eqref{diffeqWomega} extend to other values of $(\zeta, \omega)$. This completes the proof of Theorem \ref{Mth}.

\section{Semiclassical limit of $\mathcal Q$: Proof of Theorem \ref{Qth}} \label{section5}

\subsection{Proof of the asymptotic formula (\ref{Qfinalasymptotics}) for $\mathcal{Q}$}
Fix $(\sigma_s,\mu) \in \mathbb R^2$. Making the change of variables $x = \frac{t}{b}$ in the definition \eqref{defQ} of $\mathcal Q$, we obtain
\begin{align}\label{calQtintegral}
\mathcal{Q}\bigg(b, \frac{\sigma_s}{b}, \frac{\mu}{b}\bigg) 
= \int_{b\mathsf{Q}} \mathcal{I}(b, \sigma_s, \mu) dt,
\end{align}
where
$$\mathcal{I}(b, \sigma_s, \mu) := e^{i \pi  \big(\frac{1}{6}+\frac{7Q^2}{24} -\frac{\mu^2}{2 b^2}+i \frac{\mu}{b} Q - \frac{\sigma_s^2}{b^2}\big)} s_b\bigg(\frac{\mu}{b}\bigg) e^{-\frac{i\pi t^2}{b^2}} e^{2i\pi \frac{t}{b}(\frac{\mu}{b}-\frac{iQ}{2})} s_b\bigg(\frac{t + \sigma_s}{b}\bigg) s_b\bigg(\frac{t-\sigma_s}{b}\bigg) \frac{1}{b}.$$
The integration contour $b\mathsf{Q}$ can be any curve from $-\infty$ to $+\infty$ passing above the points $t = \pm \sigma_s - \frac{i}{2} - \frac{i}{2}b^2$ such that its right tail satisfies
\beq
\im t + \tfrac{1}{4} + \tfrac{b^2}{4} - \tfrac{\im \mu}{2} < -b \delta \qquad
\text{for all $t \in b\mathsf{Q}$ with $\re t$ sufficiently large},
\eeq
for some $\delta>0$.  Since $(\sigma_s,\mu) \in \mathbb R^2$, we may choose the contour $b\mathsf{Q}$ to be any curve from $-\infty$ to $+\infty$ lying within the strip $\im t \in (-\frac{1}{2} - \frac{b^2}{2}, -\frac{1}{4} - \frac{b^2}{4} - b \delta)$. We choose $b\mathsf{Q}$ to be a contour independent of $b$ that lies in the strip $\im t \in (-\frac{1}{2}+\delta, -\frac{1}{4}-\delta)$. Then $\mu$ and $t \pm \sigma_s$ lie in $S_\delta$ for all $t \in b\mathsf{Q}$ and all small enough $b>0$, where $S_\delta$ is the strip defined in (\ref{Sdeltadef}).
Therefore we can apply the expansion for $s_b(z/b)$ obtained in Proposition \ref{sbprop} with $N = 1$ to 
deduce that the integrand in (\ref{calQtintegral}) satisfies 
\begin{align}\label{calQintegrandexpansion}
\mathcal{I}(b, \sigma_s, \mu)
= \frac{e^{\frac{3 \pi i}{4}}}{b} e^{\frac{f(t)}{b^2}} g(t) e^{b^2 H(t) + O(b^6)} \qquad \text{as $b \to 0^+$}
\end{align}
uniformly for $t \in b\mathsf{M}$, where the functions $f(t) = f(t; \sigma_s, \mu)$ and $g(t) = g(t; \mu)$ are analytic function of $t$ in the strip $-1/2 < \im t < 1/2$ defined by\footnote{These functions should not be confused with the functions in (\ref{Mgdef}). To stress the analogy with the proof  in Section \ref{section4}, we use the same notation here as we did in Section \ref{section4} for the functions $f$ and $g$ as well as some other quantities. }
\begin{align}\nonumber
& f(t) = -\frac{\Li_2(-e^{2 \pi  \mu }) +  \Li_2(-e^{2 \pi  (t-\sigma_s)}) + \Li_2(-e^{2 \pi(t+\sigma_s)})}{2 \pi i}+2 i \pi  \mu  t+\pi (t - \mu) +\frac{5 i \pi }{12},
	\\ \label{Qgdef}
& g(t) = e^{\pi (t- \mu)},
\end{align}
and the function $H(t) = H(t; \sigma_s, \mu)$ is given by
\begin{align*}
H(t) =&\; \frac{i \pi}{12 \left(e^{2 \pi  \mu }+1\right) \left(e^{2 \pi \sigma_s}+e^{2 \pi  t}\right) \left(e^{2 \pi  (\sigma_s+t)}+1\right)}
 \Big(4 e^{2 \pi  (\mu +\sigma_s)}+5 e^{2 \pi  \sigma_s}
+2 e^{2 \pi  (\mu +\sigma_s+2 t)}
	\\
& +3 e^{2 \pi  (\mu +2 \sigma_s+t)}+3 e^{2 \pi  (\mu +t)}+3 e^{2 \pi  (\sigma_s+2 t)}+4 e^{2 \pi  (2 \sigma_s+t)}+4 e^{2 \pi  t}\Big).
   \end{align*}   
It is easily seen that $H(t)$ is bounded for all $t \in S_\delta$. Hence, substitution of (\ref{calQintegrandexpansion}) into (\ref{calQtintegral}) gives
\begin{align}\label{calQfg}
\mathcal{Q}\bigg(b, \frac{\sigma_s}{b}, \frac{\mu}{b}\bigg) 
& = \frac{e^{\frac{3\pi i}{4}}}{b} \int_{b\mathsf{Q}}  e^{\frac{f(t)}{b^2}} g(t) dt \big(1 + O(b^2)\big).
\end{align}

The next lemma will be useful to estimate the tails at $\pm \infty$ of the integral in (\ref{calQfg}).

\begin{lemma}\label{Qefgatinftylemma}
Let $t = t_1 + it_2$. There are constants $C > 0$ and $c>0$ such that 
\begin{align}\label{Qefgatinfty}
& \big|e^{\frac{f(t)}{b^2}} g(t)\big| \leq C e^{-\frac{c}{b^2}|t_1|}
\end{align}
for all sufficiently large $|t_1|$, for all $t_2 \in (-1/2, -1/4-\delta)$, and for all sufficiently small $b> 0$. 
\end{lemma}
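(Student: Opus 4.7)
The strategy mirrors the proof of Lemma \ref{Mefgatinftylemma}: I would extract the leading behavior of $\re f(t)$ as $|t_1| \to \infty$ via the asymptotic formula $\Li_2(-z) = -\tfrac{1}{2}(\ln z)^2 + O(1)$ recorded in \eqref{Li2asymptotics}, and then check that, for $b$ sufficiently small, the $\Theta(1/b^2)$ exponential decay of $e^{\re f(t)/b^2}$ in $t_1$ dominates the $O(e^{\pi t_1})$ growth of $|g(t)|$. A preliminary observation is that, for $t = t_1 + i t_2$ with $t_2 \in (-1/2, -1/4 - \delta)$, the point $-e^{2\pi(t \pm \sigma_s)}$ has argument $\pi + 2\pi t_2 \in (0, \pi/2 - 2\pi\delta)$, so it stays inside the principal branch of $\Li_2$ and one may substitute $\ln e^{2\pi(t \pm \sigma_s)} = 2\pi(t \pm \sigma_s)$ without correction.

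For the regime $t_1 \to +\infty$, the asymptotic formula gives $\Li_2(-e^{2\pi(t \pm \sigma_s)}) = -2\pi^2 (t \pm \sigma_s)^2 + O(1)$. Substituting into the definition of $f$, the $\Li_2$ contributions collapse to $-2\pi i (t^2 + \sigma_s^2) + O(1)$, and hence
$$f(t) = -2\pi i (t^2 + \sigma_s^2) + 2i\pi \mu t + \pi(t - \mu) + O(1).$$
Taking real parts, the only unbounded terms in $t_1$ are $\re(-2\pi i t^2) = 4\pi t_1 t_2$ and $\re(\pi t) = \pi t_1$, so
$$\re f(t) = (4\pi t_2 + \pi)\, t_1 + O(1) \qquad \text{as } t_1 \to +\infty.$$
Uniformly for $t_2 < -1/4 - \delta$ one has $4\pi t_2 + \pi < -4\pi\delta$. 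Combining with $|g(t)| = e^{\pi(t_1 - \mu)}$ then yields $|e^{f(t)/b^2} g(t)| \leq C \exp\bigl[ (-4\pi\delta/b^2 + \pi) t_1 \bigr]$, which for all sufficiently small $b$ is bounded by $C e^{-2\pi \delta\, t_1 / b^2}$.

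For $t_1 \to -\infty$, both $e^{2\pi(t \pm \sigma_s)}$ tend to zero, so $\Li_2(-e^{2\pi(t \pm \sigma_s)}) = O(e^{2\pi t_1})$ is negligible and one gets $\re f(t) = \pi t_1 + O(1) \to -\infty$. Combining again with $|g(t)| = e^{\pi(t_1 - \mu)}$ gives $|e^{f(t)/b^2} g(t)| \leq C e^{\pi t_1 (1 + 1/b^2)} \leq C e^{-\pi |t_1|/b^2}$ for small $b$. Choosing $c = \min(2\pi\delta, \pi/2)$ produces the desired uniform estimate.

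I expect no significant obstacle: the computation is essentially identical in spirit to Lemma \ref{Mefgatinftylemma}. The only points requiring care are the branch of the logarithm in the $\Li_2$ asymptotic (handled by the observation that $\arg(-e^{2\pi(t \pm \sigma_s)}) \in (0, \pi/2)$ on the strip) and uniformity in $t_2$, which is automatic because the unfavorable coefficient $4\pi t_2 + \pi$ is bounded strictly away from zero by the strict inequality $t_2 < -1/4 - \delta$.
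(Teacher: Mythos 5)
Your argument is essentially identical to the paper's: both extract $\re f(t) = \pi t_1(1 + 4t_2) + O(1)$ as $t_1 \to +\infty$ from the $\Li_2(-z) \sim -\tfrac12(\ln z)^2$ asymptotics, use the strict bound $t_2 < -1/4 - \delta$ to get a negative coefficient of $t_1$, combine with $|g(t)| = e^{\pi(t_1-\mu)}$, and handle $t_1 \to -\infty$ by noting the $\Li_2$ terms vanish so $\re f(t) = \pi t_1 + O(1)$. The only cosmetic differences are that you make the branch check explicit and quote the exact value of $|g(t)|$ rather than $O(e^{\pi t_1})$.
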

\begin{proof}
Using (\ref{Li2asymptotics}) and the fact that $\im \Li_{2}(-e^{2\pi \mu}) = 0$, we obtain 
\begin{align*}
\re f(t)  
& = - \im \frac{ \Li_2\left(-e^{2 \pi  (t-\sigma_s)}\right) + \Li_2\left(-e^{2 \pi(t+\sigma_s)}\right)}{2 \pi }- 2 \pi  \mu  t_2+\pi (t_1 - \mu) 
	\\
& = -\im\left(\frac{-\frac{1}{2}(2\pi (t-\sigma_s))^2 - \frac{1}{2}(2\pi (t + \sigma_s))^2}{2 \pi }\right) 
+ \pi  t_1 + O(1)
	\\
& =  \pi t_1 (1 + 4t_2) + O(1)
\end{align*}
as $t_1 \to +\infty$ uniformly for $t_2 \in (-1/2, -1/4)$. 
Hence there is a $c > 0$ such that $\re f(t) < -c|t_1|$ for all sufficiently large positive $t_1$ and all $t_2 \in (-1/2, -1/4-\delta)$.
Since $|g(t)| = O(e^{\pi t_1})$ as $t_1 \to +\infty$ uniformly for $t_2 \in (-1/2, 0)$, the estimate for large positive values of $t_1$ follows.

As $t_1 \to -\infty$, we have $\re f(t) = - \pi |t_1| + O(1)$ uniformly for $t_2 \in (-1/2, -1/4)$.
Since $|g(t)| = O(1)$ as $t_1 \to -\infty$ uniformly for $t_2 \in (-1/2, -1/4)$, the estimate for large negative values of $t_1$ also follows.
\end{proof}

The first and second derivatives of $f$ are given by
\begin{align}
f'(t) & = -i \ln \left(e^{2 \pi  (t-\sigma_s)}+1\right)-i \ln \left(e^{2 \pi  (\sigma_s+t)}+1\right)
+2 i \pi  \mu+\pi,
	\\ \label{Qd2fdt2}
f''(t) & = -2 i \pi  \left(\frac{\sinh (2 \pi  t)}{\cosh (2 \pi  \sigma_s)+\cosh (2 \pi  t)}+1\right).
\end{align}
In particular, the saddle-point equation $f'(t) = 0$ takes the form
\begin{align}\label{Qsaddlepointeq}
\ln \big(1 + e^{2 \pi  (t-\sigma_s )}\big) + \ln \big(1 + e^{2 \pi (t + \sigma_s)}\big) = 2 \pi  \mu - \pi i.
\end{align}

Let us henceforth assume that $(\sigma_s,\mu) \in \mathbb R^2$ satisfy 
$$e^{2 \pi  \mu} > \sinh^2(2 \pi \sigma_s).$$ 
The next lemma shows that there is a unique saddle point in the strip $-1/2 < \im t < -1/4$.

\begin{lemma}\label{t0lemmaQ}
The saddle point equation (\ref{Qsaddlepointeq}) has a unique solution $t_0 = t_0(\sigma_s,\mu)$ in the strip $-1/2 < \im t < -1/4$ given by
\begin{align}\label{Qt0def}
t_0 = \frac{1}{2\pi} \ln\Big(-\cosh (2 \pi \sigma_s) - i \sqrt{e^{2 \pi  \mu} - \sinh^2(2 \pi \sigma_s)}\Big).
\end{align}
\end{lemma}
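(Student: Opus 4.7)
The plan is to mirror the structure of the proof of Lemma~\ref{Mt0lemma}. The strategy has three steps: locate $t_0$ in the required strip, establish uniqueness via an exponentiated quadratic equation, and rule out the integer ambiguity introduced by exponentiating a logarithmic equation.

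First, I would verify that the $t_0$ defined by (\ref{Qt0def}) actually lies in the strip $-1/2 < \im t < -1/4$. The hypothesis $e^{2\pi\mu} > \sinh^2(2\pi\sigma_s)$ guarantees that $\sqrt{e^{2\pi\mu} - \sinh^2(2\pi\sigma_s)}$ is a positive real number, so the argument of $-\cosh(2\pi\sigma_s) - i\sqrt{e^{2\pi\mu} - \sinh^2(2\pi\sigma_s)}$ has negative real and negative imaginary parts, hence principal argument in $(-\pi, -\pi/2)$; dividing by $2\pi$ places $\im t_0$ in $(-1/2, -1/4)$.

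Second, I would establish uniqueness by setting $z = e^{2\pi t}$ and exponentiating (\ref{Qsaddlepointeq}) to obtain
\[
(1 + z e^{-2\pi \sigma_s})(1 + z e^{2\pi \sigma_s}) = -e^{2\pi \mu},
\]
which simplifies to the quadratic $z^2 + 2z\cosh(2\pi\sigma_s) + 1 + e^{2\pi\mu} = 0$ with roots
\[
z_0^{\pm} = -\cosh(2\pi\sigma_s) \pm i\sqrt{e^{2\pi\mu} - \sinh^2(2\pi\sigma_s)}.
\]
Thus every solution of (\ref{Qsaddlepointeq}) has the form $\frac{1}{2\pi}\ln z_0^{\pm} + in$ for some $n \in \Z$ (principal branch). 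Since $\arg z_0^{+} \in (\pi/2, \pi)$ while $\arg z_0^{-} \in (-\pi, -\pi/2)$, only the choice $z = z_0^{-}$ with $n = 0$ produces a point in the strip $-1/2 < \im t < -1/4$, and this point coincides with (\ref{Qt0def}).

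Third, I would verify that this $t_0$ really satisfies (\ref{Qsaddlepointeq}) and not merely its exponentiated version. Since $\sigma_s \in \R$ and $\im t_0 \in (-1/2, -1/4)$, the numbers $e^{2\pi(t_0 \pm \sigma_s)}$ have principal arguments in $(-\pi, -\pi/2)$, so both $1 + e^{2\pi(t_0 \pm \sigma_s)}$ lie in the open lower half-plane, giving
\[
\im \ln\bigl(1 + e^{2\pi(t_0 - \sigma_s)}\bigr),\ \im \ln\bigl(1 + e^{2\pi(t_0 + \sigma_s)}\bigr) \in (-\pi, 0).
\]
The imaginary part of the left-hand side of (\ref{Qsaddlepointeq}) therefore lies in $(-2\pi, 0)$, while the right-hand side has imaginary part exactly $-\pi$; the only compatible integer ambiguity is zero.

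The main obstacle is the final step: exponentiating a logarithmic equation loses a multiple of $2\pi i$, and one must pin this multiple to zero by an imaginary-part count. This is analogous to the argument around (\ref{Msaddlemod})--(\ref{Mtsaddleimlog}) in the proof of Lemma~\ref{Mt0lemma}, but with two logarithm terms rather than the difference of two; the symmetric placement of $\pm\sigma_s$ (since $\sigma_s$ is real) is what keeps both terms safely in the lower half-plane.
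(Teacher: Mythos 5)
Your proof is correct and follows essentially the same route as the paper's: locate $\im t_0$ by a quadrant count, reduce to the quadratic in $z=e^{2\pi t}$ with roots $z_\pm$, observe that only $z_-$ lands in the target strip, and rule out the spurious $2\pi i$ ambiguity by bounding the imaginary parts of the two logarithms in $(-\pi,0)$. The paper phrases the final step by moving everything to one side and concluding $N=0$, while you compare the imaginary parts of the two sides directly, but these are the same argument.
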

\begin{proof}
Since $\sigma_s,\mu \in \R$ satisfy $e^{2 \pi  \mu} > \sinh^2(2 \pi \sigma_s)$, we have $-1/2 < \im t_0 < -1/4$. 
On the other hand, letting $z = e^{2\pi t}$, the saddle-point equation (\ref{Qsaddlepointeq}) is
\begin{align}
\ln(1 + z e^{-2\pi \sigma_s}) + \ln(1 + z e^{2\pi \sigma_s}) = 2 \pi  \mu - \pi i.
\end{align}
Exponentiation gives the equation
$$(1 + ze^{-2\pi \sigma_s})(1 + z e^{2\pi \sigma_s}) = -e^{2 \pi  \mu},$$
which has two solutions $z_\pm$ given by
$$z_\pm = -\cosh (2 \pi \sigma_s) \pm i \sqrt{e^{2 \pi  \mu} - \sinh^2(2 \pi \sigma_s)},$$
where the expression inside the square root is $> 0$ by assumption. It follows that any solution of (\ref{Msaddlepointeq}) has the form $\frac{1}{2\pi} \ln z_\pm + in$ for some integer $n$. But 
$$\frac{\arg(z_-)}{2\pi} = - \frac{\arg(z_+)}{2\pi} = \frac{\arctan(\frac{\sqrt{e^{2\pi \mu} - \sinh^2(2\pi \sigma_s)}}{\cosh(2\pi \sigma_s)}) - \pi}{2\pi} \in (-1/2, -1/4),$$
so there can be at most one solution in the strip $-1/2 < \im t < -1/4$. 
It also follows that $t_0 = \frac{1}{2\pi} \ln z_-$ fulfills the saddle point equation (\ref{Qsaddlepointeq}) modulo an integer multiple of $2\pi i$, i.e.,
\begin{align}\label{Qsaddlemod}
\ln \big(1 + e^{2 \pi  (t_0-\sigma_s )}\big) + \ln \big(1 + e^{2 \pi (t_0 + \sigma_s)}\big) - 2 \pi  \mu + \pi i = 2\pi i N,
\end{align}
for some $N \in \Z$. Since 
\begin{align}\label{Qtsaddleimlog}
\im \ln \big(1 + e^{2 \pi  (t_0 \pm \sigma_s )}\big) \in (-\pi,0),
\end{align}
we conclude that the imaginary part of the left-hand side of (\ref{Qsaddlemod}) lies in the open interval $(-\pi,\pi)$, so the only possibility is $N = 0$. This shows that $t_0$ satisfies (\ref{Qsaddlepointeq}) and concludes the proof.
\end{proof}

We henceforth let $t_0$ be the solution (\ref{Qt0def}) of the saddle point equation.

\begin{lemma}\label{Qrefprimeprimelemma}
We have
\begin{align}\label{Qrefppnegative}
\re f''(t) < 0 \qquad \text{for all $t$ in the strip $-\frac{1}{2} < \im t < -\frac{1}{4}$.}
\end{align}
In particular, $\arg\big(-f''(t_0)\big) \in (-\pi/2, \pi/2)$.
\end{lemma}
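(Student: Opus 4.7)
The plan is to work directly with the closed-form expression for $f''(t)$ in (\ref{Qd2fdt2}) and reduce the inequality to an elementary trigonometric/hyperbolic estimate. Since the factor $-2i\pi$ rotates by $-\pi/2$, the real part of $f''(t)$ equals $2\pi$ times the imaginary part of the rational expression $\frac{\sinh(2\pi t)}{\cosh(2\pi\sigma_s)+\cosh(2\pi t)}$ (the constant $+1$ contributes nothing to the imaginary part). So the whole statement is equivalent to showing
\begin{equation*}
\im\!\left[\frac{\sinh(2\pi t)}{\cosh(2\pi\sigma_s)+\cosh(2\pi t)}\right] < 0 \qquad \text{for } -\tfrac{1}{2} < \im t < -\tfrac{1}{4}.
\end{equation*}

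First I would write $t = t_1 + it_2$ with $t_2 \in (-1/2,-1/4)$, expand
\begin{equation*}
\sinh(2\pi t) = \sinh(2\pi t_1)\cos(2\pi t_2) + i\cosh(2\pi t_1)\sin(2\pi t_2),
\end{equation*}
\begin{equation*}
\cosh(2\pi t) = \cosh(2\pi t_1)\cos(2\pi t_2) + i\sinh(2\pi t_1)\sin(2\pi t_2),
\end{equation*}
and multiply numerator and denominator by the conjugate of $\cosh(2\pi\sigma_s)+\cosh(2\pi t)$. Using the identity $\cosh^2(2\pi t_1) - \sinh^2(2\pi t_1) = 1$, the imaginary part of the numerator collapses to
\begin{equation*}
\sin(2\pi t_2)\bigl(\cos(2\pi t_2) + \cosh(2\pi t_1)\cosh(2\pi \sigma_s)\bigr),
\end{equation*}
so the imaginary part of the fraction has the sign of this expression.

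Next I would check the signs of the two factors on the strip. For $t_2 \in (-1/2, -1/4)$ we have $2\pi t_2 \in (-\pi, -\pi/2)$, so $\sin(2\pi t_2) < 0$ while $\cos(2\pi t_2) \in (-1,0)$. Since $\cosh(2\pi t_1)\cosh(2\pi\sigma_s) \geq 1 > -\cos(2\pi t_2)$, the second factor is strictly positive. Hence the product is negative, which gives $\re f''(t) < 0$. (A side remark: one should verify the denominator $\cosh(2\pi \sigma_s) + \cosh(2\pi t)$ does not vanish on the strip, which is easy: its vanishing forces $\sinh(2\pi t_1)\sin(2\pi t_2) = 0$, and neither option is compatible with $t_2 \in (-1/2,-1/4)$.)

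Finally, since $t_0 \in \{-1/2 < \im t < -1/4\}$ by Lemma \ref{t0lemmaQ}, the above gives $\re(-f''(t_0)) > 0$, so $-f''(t_0) \neq 0$ and $\arg(-f''(t_0)) \in (-\pi/2, \pi/2)$. I do not expect any serious obstacle here; the only slightly delicate point is correctly tracking the sign of $\cos(2\pi t_2) + \cosh(2\pi t_1)\cosh(2\pi\sigma_s)$, which would fail on the larger strip $-1/2 < \im t < 0$ considered in Section \ref{section4}. This is exactly why the narrower strip $\im t \in (-1/2, -1/4)$ is needed in the present setting.
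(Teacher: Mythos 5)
Your argument is correct and follows essentially the same route as the paper: you reduce $\re f''(t)$ to $2\pi\,\im\bigl[\sinh(2\pi t)/(\cosh(2\pi\sigma_s)+\cosh(2\pi t))\bigr]$, rationalize, and the numerator collapses to $\sin(2\pi t_2)\bigl(\cosh(2\pi t_1)\cosh(2\pi\sigma_s)+\cos(2\pi t_2)\bigr)$, which the paper also obtains (it additionally factors the denominator as $(\cosh(2\pi(t_1-\sigma_s))+\cos(2\pi t_2))(\cosh(2\pi(t_1+\sigma_s))+\cos(2\pi t_2))$, but that is cosmetic). The sign analysis on the strip and the conclusion about $\arg(-f''(t_0))$ are exactly as in the paper.

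One inaccuracy in your closing remark: you claim the bound on $\cos(2\pi t_2)+\cosh(2\pi t_1)\cosh(2\pi\sigma_s)$ ``would fail on the larger strip $-1/2<\im t<0$.'' In fact $\cosh(2\pi t_1)\cosh(2\pi\sigma_s)\ge 1 > -\cos(2\pi t_2)$ holds for every $t_2\in(-1/2,0)$, so that factor is positive on the wider strip as well, and indeed $\re f''(t)<0$ there too. The restriction to $\im t\in(-1/2,-1/4)$ in Section~\ref{section5} comes from the placement of the integration contour $b\mathsf{Q}$ (the right-tail decay requirement), not from any breakdown of this sign estimate. This does not affect the validity of your proof of the lemma, but the stated reason is not the right one.
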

\begin{proof}
Taking the real part of (\ref{Qd2fdt2}), we find, for all $t = t_1 + it_2$ with $-1/2 < t_2 < -1/4$,
\begin{align}\label{Qrefppt}
\re f''(t) = \frac{2\pi  ( \cosh (2 \pi  \sigma_s) \cosh (2 \pi t_1)+\cos (2 \pi  t_2))  \sin (2 \pi  t_2)}{(\cosh (2 \pi  (t_1-\sigma_s))+\cos (2 \pi  t_2))  (\cosh (2 \pi  (\sigma_s+t_1))+\cos (2 \pi  t_2))}.
\end{align}
Since $t_2 \in (-1/2, -1/4)$, we have $\cos(2\pi t_2) \in (-1,0)$ and $\sin(2\pi t_2) \in (-1,0)$, so the numerator in (\ref{Qrefppt}) is strictly negative and the denominator is strictly positive.
\end{proof}

We define $\alpha_0 = \alpha_0(\sigma_s,\mu)$ by
\begin{align}\label{Qalpha0def}
\alpha_0 = -\frac{\arg(-f''(t_0))}{2}.
\end{align}
In view of Lemma \ref{Qrefprimeprimelemma}, we have $\alpha_0 \in (-\pi/4,\pi/4)$.
We now proceed in the same way as in Section \ref{section4}: We first define an analytic bijection $\varphi$ satisfying (\ref{fvarphi}) from an open neighborhood $\mathcal{D}_\epsilon$ of $t_0$ onto $D_\epsilon(0)$ by (\ref{varphidef}). We then let $\Gamma = \Gamma_1 \cup \Gamma_2 \cup \Gamma_3$ be the union of $\Gamma_1 = \varphi^{-1}(-\epsilon) + \R_{\leq 0}$, $\Gamma_2 = \varphi^{-1}([-\epsilon, \epsilon])$, and $\Gamma_3 = \varphi^{-1}(\epsilon) + \R_{\geq 0}$.
Deforming the contour to $\Gamma$, equation (\ref{calQfg}) can be written as
\begin{align}\label{calQI1I2I3}
\mathcal{Q}\bigg(b, \frac{\sigma_s}{b}, \frac{\mu}{b}\bigg) 
& = \frac{e^{\frac{3\pi i}{4}}}{b} e^{\frac{f(t_0)}{b^2}} (I_1(b) + I_2(b) + I_3(b)) (1 + O(b^2)),
\end{align} 
where the functions $I_j(b) = I_j(b; \sigma_s, \mu)$ are defined by
$$I_j(b) = \int_{\Gamma_j} e^{\frac{f(t) - f(t_0)}{b^2}} g(t) dt, \qquad j = 1,2,3.$$

By (\ref{Qrefppnegative}), we have $\re f''(t) < 0$ everywhere in the strip $-1/2 < \im t < -1/4$, so for each $t_2 \in (-1/2, -1/4)$, $\re f(t_1 + it_2)$ is a strictly concave function of $t_1 \in \R$.
The following lemma therefore follows in the same way as Lemma \ref{Mreflemma}.

\begin{lemma}\label{Qreflemma}
 Shrinking $\epsilon > 0$ if necessary, the following estimate holds for all $t \in \Gamma_1 \cup \Gamma_3$ for some $c_0 > 0$:
$$\re \big[f(t) - f(t_0)\big] < -c_0.$$
\end{lemma}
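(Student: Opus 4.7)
The plan is to follow essentially the same strategy as in Lemma \ref{Mreflemma}, transplanted to the current setting: the relevant horizontal strip is now $-\tfrac12 < \im t < -\tfrac14$, and all the local data needed at $t_0$ (namely $\alpha_0 \in (-\pi/4, \pi/4)$ and strict concavity of $t_1 \mapsto \re f(t_1 + it_2)$ for each fixed $t_2$ in this strip) have already been supplied by Lemma \ref{Qrefprimeprimelemma}. First I would note that the definition (\ref{varphidef}) of $\varphi$ combined with (\ref{fvarphi}) gives $f(\varphi^{-1}(\pm \epsilon)) - f(t_0) = -\epsilon^2/2$, so the desired estimate with, say, $c_0 = \epsilon^2/3$ is already satisfied at the two finite endpoints of $\Gamma_1$ and $\Gamma_3$.

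The bulk of the work is then to show that $\re f$ is nonincreasing as one moves outward to $-\infty$ along $\Gamma_1$, and outward to $+\infty$ along $\Gamma_3$, so that the endpoint bound propagates to the entire rays. For this, fix $t_2 \in (-\tfrac12, -\tfrac14)$. Lemma \ref{Qrefprimeprimelemma} makes $t_1 \mapsto \re f(t_1 + it_2)$ strictly concave on $\R$, so it admits a unique critical point $\hat{t}_1(t_2)$. By the implicit function theorem, $\hat{t}_1(\cdot)$ traces out a smooth curve through $t_0$, whose tangent direction at $t_0$ is $e^{i(2\alpha_0 - \pi/2)}$ by exactly the computation used in the proof of Lemma \ref{Mreflemma}, since it relies only on the local Taylor data of $f$ at $t_0$ and the definition (\ref{Qalpha0def}) of $\alpha_0$.

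It then remains to verify that the steepest-descent endpoints $\varphi^{-1}(\pm \epsilon)$ lie on the correct sides of this critical curve: $\varphi^{-1}(\epsilon)$ to the right (i.e., $t_1 > \hat{t}_1(t_2)$) and $\varphi^{-1}(-\epsilon)$ to the left. The steepest-descent curve has tangent direction $e^{i\alpha_0}$ at $t_0$, while the critical curve has tangent direction $e^{i(2\alpha_0 - \pi/2)}$, so their angular separation is $\pi/2 - \alpha_0 \in (\pi/4, 3\pi/4)$ because $\alpha_0 \in (-\pi/4, \pi/4)$ by Lemma \ref{Qrefprimeprimelemma}; this forces the correct left/right positioning for all sufficiently small $\epsilon$. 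Strict concavity of $\re f(\cdot + it_2)$ then gives $\re f'(t) > 0$ to the left and $\re f'(t) < 0$ to the right of the critical curve, so $\re f$ decreases as one moves leftward from $\varphi^{-1}(-\epsilon)$ along $\Gamma_1$ and rightward from $\varphi^{-1}(\epsilon)$ along $\Gamma_3$. Combining with the endpoint bound yields $\re [f(t) - f(t_0)] < -c_0$ on all of $\Gamma_1 \cup \Gamma_3$ after possibly shrinking $\epsilon$.

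The only point where the argument could plausibly go wrong is that the horizontal rays $\Gamma_1, \Gamma_3$ must actually remain inside the open strip $-\tfrac12 < \im t < -\tfrac14$ where the concavity of Lemma \ref{Qrefprimeprimelemma} holds; but since $\varphi^{-1}(\pm \epsilon)$ lies arbitrarily close to $t_0 \in \{-\tfrac12 < \im t < -\tfrac14\}$ for $\epsilon$ small, and these rays stay at the fixed imaginary heights $\im \varphi^{-1}(\pm \epsilon)$, this is automatic. Thus I expect no genuine obstacle beyond what was already handled in Lemma \ref{Mreflemma}, and no new computation is needed.
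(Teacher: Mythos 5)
Your proposal is correct and follows essentially the same route as the paper: the paper itself observes that strict concavity of $t_1 \mapsto \re f(t_1 + it_2)$ in the strip $-\tfrac12 < \im t < -\tfrac14$ (from Lemma~\ref{Qrefprimeprimelemma}) makes the argument of Lemma~\ref{Mreflemma} carry over verbatim, which is precisely the argument you have reproduced, down to the tangent direction $e^{i(2\alpha_0 - \pi/2)}$ of the critical curve and the angular separation $\pi/2 - \alpha_0 \in (\pi/4, 3\pi/4)$. Your added remark that the rays $\Gamma_1, \Gamma_3$ remain in the concavity strip for small $\epsilon$ is a correct and implicit point in the paper's proof.
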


The next lemma follows from Lemma \ref{Qefgatinftylemma} and Lemma \ref{Qreflemma}. It shows that $I_1(b)$ and $I_3(b)$ are exponentially small as $b \to 0$. The proof is analogous to the proof of Lemma \ref{MI1I3lemma} and is therefore omitted.

\begin{lemma}\label{QI1I3lemma}
There exists a $c > 0$ such that
$$I_1(b) = O(e^{-\frac{c}{b^2}}) \quad \text{and} \quad I_3(b) = O(e^{-\frac{c}{b^2}}) \qquad \text{as $b \to 0^+$}.$$
\end{lemma}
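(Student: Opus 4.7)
The plan is to follow the proof of Lemma \ref{MI1I3lemma} essentially verbatim, substituting the $\mathcal{Q}$-counterparts of the tail- and concavity-type estimates used there. Specifically, I would split each of the two unbounded rays $\Gamma_1$ and $\Gamma_3$ into a compact piece $\{|\re t| \leq M\}$ and an unbounded tail $\{|\re t| > M\}$, and estimate the two contributions separately before combining them.

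On the unbounded tails, Lemma \ref{Qefgatinftylemma} supplies constants $C, c > 0$ with $|e^{f(t)/b^2} g(t)| \leq C e^{-c |\re t|/b^2}$ for $|\re t|$ sufficiently large, $\im t \in (-1/2, -1/4-\delta)$, and $b$ small. Extracting the factor $e^{\re f(t_0)/b^2}$ and using $\int_M^{\infty} e^{-c t_1/b^2}\, dt_1 = \frac{b^2}{c} e^{-cM/b^2}$, one obtains a bound of order $e^{(|\re f(t_0)|-cM)/b^2}$; choosing $M$ large enough that $cM > |\re f(t_0)|$ makes this $O(e^{-c'/b^2})$ for some $c' > 0$.

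On the compact piece, Lemma \ref{Qreflemma} gives the uniform bound $\re[f(t) - f(t_0)] < -c_0$ on $\Gamma_1 \cup \Gamma_3$, and $g$ is continuous (hence bounded) on the compact set $(\Gamma_1 \cup \Gamma_3) \cap \{|\re t| \leq M\}$. Consequently this contribution is trivially of order $2M \, e^{-c_0/b^2} \sup|g| = O(e^{-c_0/b^2})$. Setting $c := \min(c', c_0)$ then yields the claimed estimates for both $I_1(b)$ and $I_3(b)$.

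Because the argument is a direct mirror of the one for Lemma \ref{MI1I3lemma} and the two nontrivial analytic inputs (exponential tail decay and strict concavity of $\re f(\cdot + it_2)$ in the relevant strip) are already packaged in Lemmas \ref{Qefgatinftylemma} and \ref{Qreflemma}, I do not expect any genuine obstacle. The only minor point of attention is that here $|g(t)| = e^{\pi(t_1-\mu)}$ grows exponentially as $t_1 \to +\infty$, rather than remaining bounded as in the $\mathcal{M}$-case; however, this growth is already absorbed by the stronger decay of $e^{f(t)/b^2}$ built into Lemma \ref{Qefgatinftylemma}, so the splitting argument goes through unchanged.
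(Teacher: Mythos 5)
Your proof is correct and mirrors exactly what the paper intends: the paper omits this proof, stating only that it follows from Lemmas \ref{Qefgatinftylemma} and \ref{Qreflemma} analogously to Lemma \ref{MI1I3lemma}. You have correctly identified the one place where the analogy requires care---the exponential growth of $g(t)$ as $\re t \to +\infty$, in contrast to the bounded $g$ of the $\mathcal{M}$-case---and rightly observed that it is already absorbed into the decay estimate of Lemma \ref{Qefgatinftylemma}, while on the compact piece one only needs $\sup |g|$ over the bounded set $(\Gamma_1 \cup \Gamma_3) \cap \{|\re t| \leq M\}$.
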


The next lemma follows in the same way as Lemma \ref{MI2lemma}.

\begin{lemma}\label{QI2lemma}
As $b \to 0^+$, it holds that
$$I_2(b) = \frac{g(t_0) \sqrt{2\pi}  }{ |f''(t_0)|^{1/2} e^{-i\alpha_0} }b
+ O(b^2).$$
\end{lemma}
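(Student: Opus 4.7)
The plan is to imitate the proof of Lemma \ref{MI2lemma} essentially line-by-line, since the local setup near $t_0$ is formally identical: we have an analytic bijection $\varphi$ defined on a neighborhood $\mathcal{D}_\epsilon$ of $t_0$ via \eqref{varphidef} which satisfies $f(t)-f(t_0) = -\varphi(t)^2/2$, and $\Gamma_2 = \varphi^{-1}([-\epsilon,\epsilon])$ is the image in $\mathcal{D}_\epsilon$ of the real interval $[-\epsilon,\epsilon]$ under $\varphi^{-1}$. The only structural difference from Section \ref{section4} is that $f$, $g$, $t_0$, and $\alpha_0$ refer to the objects defined in \eqref{Qgdef}, \eqref{Qt0def}, and \eqref{Qalpha0def}, but the steepest-descent argument does not care about this.

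First I would perform the change of variables $u = \varphi(t)$ in $I_2(b)$. Using $dt = du/\varphi'(\varphi^{-1}(u))$ and $f(t)-f(t_0)=-u^2/2$, the integral takes the form
\begin{align*}
I_2(b) = \int_{-\epsilon}^{\epsilon} e^{-\frac{u^2}{2b^2}}\, G(u)\, du,
\qquad G(u) := \frac{g(\varphi^{-1}(u))}{\varphi'(\varphi^{-1}(u))}.
\end{align*}
Since $g$ and $\varphi^{-1}$ are analytic in a neighborhood of $t_0$ and $\varphi'(t_0)\neq 0$ (this is guaranteed by Lemma \ref{Qrefprimeprimelemma} together with the definition \eqref{varphidef}), $G$ is analytic near $u=0$, so $G(u) = G(0) + O(|u|)$ uniformly on $[-\epsilon,\epsilon]$.

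Next I would identify $G(0)$. Differentiating \eqref{varphidef} at $t=t_0$ (equivalently, comparing the Taylor expansion of both sides of $f(t)-f(t_0)=-\varphi(t)^2/2$ to second order) gives $\varphi'(t_0) = |f''(t_0)|^{1/2} e^{-i\alpha_0}$, with the branch pinned down by \eqref{Qalpha0def}. Hence $G(0) = g(t_0)/\bigl(|f''(t_0)|^{1/2} e^{-i\alpha_0}\bigr)$, and splitting off the constant term of $G$ yields
\begin{align*}
I_2(b) = \frac{g(t_0)}{|f''(t_0)|^{1/2} e^{-i\alpha_0}} \int_{-\epsilon}^{\epsilon} e^{-\frac{u^2}{2b^2}}\, du \;+\; O\!\left(\int_{-\epsilon}^{\epsilon} e^{-\frac{u^2}{2b^2}}\,|u|\,du\right).
\end{align*}

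Finally I would evaluate the two remaining integrals as $b\to 0^+$. Rescaling $u = bv$ shows that the error term is $O(b^2)$, while the leading Gaussian integral equals $\sqrt{2\pi}\,b$ up to the exponentially small tail $\int_{|u|>\epsilon} e^{-u^2/(2b^2)} du = O(e^{-c/b^2})$ for some $c>0$. Combining these gives
\begin{align*}
I_2(b) = \frac{g(t_0)\sqrt{2\pi}}{|f''(t_0)|^{1/2} e^{-i\alpha_0}}\, b + O(b^2),
\end{align*}
which is the claimed asymptotic. I do not anticipate a genuine obstacle: the only subtle point is fixing the branch of $\varphi'(t_0) = \sqrt{-f''(t_0)}$, but this is built into \eqref{varphidef}–\eqref{Qalpha0def} and was already handled in the proof of Lemma \ref{MI2lemma}.
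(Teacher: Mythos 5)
Your proof is correct and is exactly the argument the paper intends: the paper proves Lemma \ref{MI2lemma} by this change-of-variables-plus-Gaussian computation and then states that Lemma \ref{QI2lemma} ``follows in the same way,'' which you have simply spelled out for the $\mathcal Q$ setting.
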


Employing Lemma \ref{QI1I3lemma} and Lemma \ref{QI2lemma} in (\ref{calQI1I2I3}), we arrive at
\begin{align}\label{calQasymptotics}
\mathcal{Q}\Big(b, \frac{\sigma_s}{b}, \frac{\mu}{b}\Big) 
=&\; \frac{e^{\frac{3\pi i}{4}} g(t_0) \sqrt{2\pi}  }{ |f''(t_0)|^{1/2} e^{-i\alpha_0} } e^{\frac{f(t_0)}{b^2}} \big(1 + O(b)\big) \qquad \text{as $b \to 0^+$}.
\end{align}
It is possible to simplify the coefficient in (\ref{calQasymptotics}) as the next lemma shows. 

\begin{lemma}\label{Qegfsimplifiedlemma}
We have
\begin{align}\label{Qegfsimplified}
\frac{e^{\frac{3\pi i}{4}} g(t_0) \sqrt{2\pi}  }{ |f''(t_0)|^{1/2} e^{-i\alpha_0} } = \frac{e^{\frac{\pi i}{4}}}{\sqrt{2} (e^{2 \pi  \mu }-\sinh^2(2 \pi  \sigma_s))^{1/4}},
\end{align}
where $(e^{2 \pi  \mu }-\sinh^2(2 \pi  \sigma_s))^{1/4} > 0$.
\end{lemma}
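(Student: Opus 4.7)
The plan is to compute both the numerator and the denominator of the left-hand side of \eqref{Qegfsimplified} at the saddle point $t_0$ from Lemma \ref{t0lemmaQ}, then carefully track branches to arrive at the closed form. Throughout, I write $w := e^{2\pi t_0}$, $a := \cosh(2\pi\sigma_s) > 0$, and $b := \sqrt{e^{2\pi\mu} - \sinh^2(2\pi\sigma_s)} > 0$, so that $w = -a - ib$ by \eqref{Qt0def}. The saddle-point equation, after exponentiation, yields the algebraic relation $w^2 + 2aw + (1 + e^{2\pi\mu}) = 0$.

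First I would simplify $f''(t_0)$. Rewriting $(w^2+1)/(2w) + a = -e^{2\pi\mu}/(2w)$ using the algebraic relation above, together with $\sinh x + \cosh x = e^x$, gives
\begin{align*}
\frac{\sinh(2\pi t_0)}{\cosh(2\pi\sigma_s) + \cosh(2\pi t_0)} + 1 = \frac{w + a}{-e^{2\pi\mu}/(2w)} = -\frac{2w(w+a)}{e^{2\pi\mu}}.
\end{align*}
Substituting $w = -a-ib$ produces $w(w+a) = (-a-ib)(-ib) = -b^2 + iab$, whence
$$-f''(t_0) = \frac{4\pi b (a + ib)}{e^{2\pi\mu}}.$$
Since $a,b>0$, the factor $a+ib$ lies in the open first quadrant, so $\arg(-f''(t_0)) \in (0,\pi/2)$, consistent with Lemma \ref{Qrefprimeprimelemma}. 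Together with the definition \eqref{Qalpha0def} of $\alpha_0$ this gives $|f''(t_0)|^{1/2} e^{-i\alpha_0} = \sqrt{-f''(t_0)}$ with the principal branch.

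For $g(t_0) = e^{\pi(t_0-\mu)}$, the principal-branch convention on $t_0 = \tfrac{1}{2\pi}\ln w$ used in Lemma \ref{t0lemmaQ} forces $e^{\pi t_0} = \sqrt{w} = \sqrt{-a-ib}$ with the principal branch. Substituting and cancelling the positive factor $e^{\pi\mu}$ between numerator and denominator yields
\begin{align*}
\frac{e^{3\pi i/4} g(t_0) \sqrt{2\pi}}{|f''(t_0)|^{1/2} e^{-i\alpha_0}} = \frac{e^{3\pi i/4}}{\sqrt{2b}} \cdot \frac{\sqrt{-a-ib}}{\sqrt{a+ib}}.
\end{align*}

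The main (and only) delicate point will be to evaluate the ratio $\sqrt{-a-ib}/\sqrt{a+ib}$. Since $b>0$, neither $a+ib$ nor $-a-ib$ lies on the principal branch cut $(-\infty,0]$; the principal arguments satisfy $\arg(a+ib) \in (0,\pi/2)$ and $\arg(-a-ib) = \arg(a+ib) - \pi \in (-\pi,-\pi/2)$. Halving and subtracting gives $\arg\bigl(\sqrt{-a-ib}/\sqrt{a+ib}\bigr) = -\pi/2$, while the modulus is $1$, so the ratio equals $-i$. Combining with $e^{3\pi i/4}(-i) = e^{i\pi/4}$ and $\sqrt{2b} = \sqrt{2}\,(e^{2\pi\mu}-\sinh^2(2\pi\sigma_s))^{1/4}$ produces \eqref{Qegfsimplified}. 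The obstacle is purely a bookkeeping one: one has to verify that the branches on both sides really agree, which I would do by checking that the resulting argument of the right-hand side lies in the correct sector—exactly as was done for the analogous step in Lemma \ref{Megfsimplifiedlemma}.
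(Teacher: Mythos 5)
Your proof is correct, and it departs from the paper's route in two small but worthwhile ways. First, you exploit the exponentiated saddle-point relation $w^2 + 2aw + (1+e^{2\pi\mu})=0$ with $w=e^{2\pi t_0}$ to reduce $f''(t_0)$ to the compact form $-f''(t_0)=4\pi b(a+ib)e^{-2\pi\mu}$, whereas the paper states $f''(t_0)$ as an explicit real-plus-imaginary expression without this factorization; your form makes both $|f''(t_0)|$ and $\arg(-f''(t_0))$ transparent. Second, instead of the paper's device of squaring the left-hand side and then fixing the sign by separately bounding $\arg g(t_0)\in(-\pi/2,-\pi/4)$ and $\alpha_0\in(-\pi/4,\pi/4)$, you directly evaluate $\sqrt{-a-ib}/\sqrt{a+ib}=-i$ by tracking principal arguments (using $\arg(a+ib)\in(0,\pi/2)$ and $\arg(-a-ib)=\arg(a+ib)-\pi$), which resolves the branch unambiguously in one step. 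Both approaches rely on the same inputs ($|f''(t_0)|^{1/2}e^{-i\alpha_0}=\sqrt{-f''(t_0)}$ via Lemma~\ref{Qrefprimeprimelemma}, and $g(t_0)=e^{-\pi\mu}\sqrt{w}$), so the content is the same; yours buys a cleaner intermediate formula and no square-then-unsquare step, at the cost of having to check that $\sqrt{xy}=\sqrt{x}\sqrt{y}$ is legitimate when factoring $\sqrt{-f''(t_0)}$ (it is, since one factor is a positive real). Your closing remark that branches ``still need to be verified'' is superfluous, since your computation of the ratio $-i$ already fixes them.
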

\begin{proof}
At the saddle point $t_0$, we have
\begin{align*}
& f''(t_0) = -4 \pi  e^{-2 \pi  \mu } \cosh (2 \pi  \sigma_s) \sqrt{e^{2 \pi  \mu }-\sinh^2(2 \pi  \sigma_s)} + 4 \pi i  \left(e^{-2 \pi  \mu } \sinh^2(2 \pi  \sigma_s)-1\right).
\end{align*}
Moreover, since $\arg\big(-f''(t_0)\big) \in (-\pi/2, \pi/2)$ by Lemma \ref{Qrefprimeprimelemma}, we have $|f''(t_0)|^{1/2} e^{-i\alpha_0} = \sqrt{-f''(t_0)}$. 
On the other hand, substituting the expression (\ref{Qt0def}) for $t_0$ into the definition (\ref{Qgdef}) of $g$, we obtain 
$$g(t_0) = e^{-\pi  \mu } \sqrt{-\cosh (2 \pi  \sigma_s)-i \sqrt{e^{2
   \pi  \mu }-\sinh^2(2 \pi  \sigma_s)}},$$
where the principal branch is used for the square roots. It follows after simplification that
$$
\bigg(\frac{e^{\frac{3\pi i}{4}} g(t_0) \sqrt{2\pi}  }{ |f''(t_0)|^{1/2} e^{-i\alpha_0} }\bigg)^2 = 
\frac{e^{\frac{3\pi i}{2}} g(t_0)^2 2\pi }{-f''(t_0)} = 
\frac{i}{2 \sqrt{e^{2 \pi  \mu }-\sinh^2(2 \pi  \sigma_s)}},$$
where $e^{2 \pi  \mu }-\sinh^2(2 \pi  \sigma_s) > 0$ by assumption.
This shows that (\ref{Qegfsimplified}) holds at least up to sign. To fix the sign, note that $\arg g(t_0) \in (-\pi/2, -\pi/4)$; thus since $\alpha_0 \in (-\pi/4, \pi/4)$, it follows that
$$\arg \frac{e^{\frac{3\pi i}{4}} g(t_0) \sqrt{2\pi}  }{ |f''(t_0)|^{1/2} e^{-i\alpha_0} } \in \Big(0, \frac{3\pi}{4}\Big),$$
showing that (\ref{Qegfsimplified}) holds.  
\end{proof}

The function $f_{\mathcal{Q}}(\sigma_s, \mu)$ defined by $f_{\mathcal{Q}}(\sigma_s, \mu) := f(t_0; \sigma_s, \mu)$ can be written as in (\ref{finQ}). The asymptotic formula (\ref{Qfinalasymptotics}) then follows from (\ref{calQasymptotics}) and Lemma \ref{Qegfsimplifiedlemma}.

\subsection{$f_\mathcal{Q}$ and the Painlev\'e III$_3$ generating function}
We now turn to the proof of the last statement of Theorem \ref{Qth} which relates $f_\mathcal{Q}$ to the Painlev\'e III$_3$ generating function of Definition \ref{defgeneratingfuncPIII3}. The proof involves a careful analysis of the branches in the definition (\ref{defW}) of $W$ and certain identities for the classical dilogarithm.

\begin{lemma}
For all $(\sigma_s,\mu) \in \R^2$ such that $e^{2 \pi  \mu} > \sinh^2(2 \pi \sigma_s)$, we have
\begin{align}\nonumber
W\bigg(i\sigma_s, - \mu- \frac{i}{2}\bigg) 
= &\; \frac{\Li_2\Big(-e^{-2 \pi  (\mu +\sigma_s )} \big(\sinh (2 \pi  \sigma_s )-i \sqrt{e^{2 \pi  \mu }-\sinh
   ^2(2 \pi  \sigma_s )}\big)\Big)}{8 \pi
   ^2}
   	\\\nonumber
&   +\frac{\Li_2\Big(\frac{1}{e^{-2 \pi  \sigma_s } \big(\sinh (2 \pi  \sigma_s )-i \sqrt{e^{2 \pi  \mu }-\sinh^2(2 \pi 
   \sigma_s )}\big)}\Big)}{8 \pi ^2}     +\frac{\mu^2}{8} +\frac{i \mu}{8}-\frac{1}{32}
    	\\\label{Wonesaddleregime}
&
+ \frac{\ln^2\bigg(ie^{-\pi \mu}\Big( \sinh(2\pi \sigma_s) - i\sqrt{e^{2\pi \mu} - \sinh^2(2\pi \sigma_s)}\Big)\bigg)}{8\pi^2}.
\end{align}
where the principal branch is used for all functions. 
\end{lemma}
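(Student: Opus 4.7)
The plan is to verify (\ref{Wonesaddleregime}) by direct substitution of $(\sigma,\nu) = (i\sigma_s,\,-\mu-i/2)$ into the definition (\ref{defW}) of $W$, reconciling the principal branches that appear on the right-hand side with those prescribed in Definition \ref{defgeneratingfuncPIII3}.

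The first step is to evaluate $\eta$ at this point. The quantity $e^{\pi\nu}\sin(2\pi\sigma)$ simplifies to $e^{-\pi\mu}\sinh(2\pi\sigma_s)$, which is real and of absolute value strictly less than $1$ by the hypothesis $e^{2\pi\mu}>\sinh^2(2\pi\sigma_s)$. The principal branch of $\arcsin$ therefore yields the real value $\eta = \tfrac{1}{2\pi}\arcsin(e^{-\pi\mu}\sinh(2\pi\sigma_s)) \in (-\tfrac14,\tfrac14)$, for which $\sin(2\pi\eta) = e^{-\pi\mu}\sinh(2\pi\sigma_s)$ and $\cos(2\pi\eta) = e^{-\pi\mu}\sqrt{e^{2\pi\mu}-\sinh^2(2\pi\sigma_s)}$. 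This gives the closed form
\[
e^{2i\pi\eta} = ie^{-\pi\mu}\Bigl(\sinh(2\pi\sigma_s)-i\sqrt{e^{2\pi\mu}-\sinh^2(2\pi\sigma_s)}\Bigr).
\]
Since $2\pi\eta\in(-\pi/2,\pi/2)$, the principal logarithm gives $\ln(e^{2i\pi\eta})=2i\pi\eta$, so the $\ln^2$ term on the right-hand side of (\ref{Wonesaddleregime}) equals $-(2\pi\eta)^2$, matching the $-(2\pi\eta)^2$ contribution in (\ref{defW}).

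Next I would compute the two dilogarithm arguments. Direct calculation yields
\[
2i\pi(\sigma+\eta-i\nu/2) = 2i\pi\eta - i\pi/2 - 2\pi\sigma_s-\pi\mu,
\]
\[
-2i\pi(\sigma+\eta+i\nu/2) = -2i\pi\eta - i\pi/2 + 2\pi\sigma_s-\pi\mu,
\]
and combining these with the expression for $e^{\pm 2i\pi\eta}$ gives
\[
-e^{2i\pi(\sigma+\eta-i\nu/2)} = -e^{-2\pi(\mu+\sigma_s)}\Bigl(\sinh(2\pi\sigma_s)-i\sqrt{e^{2\pi\mu}-\sinh^2(2\pi\sigma_s)}\Bigr),
\]
\[
-e^{-2i\pi(\sigma+\eta+i\nu/2)} = e^{2\pi(\sigma_s-\mu)}\Bigl(\sinh(2\pi\sigma_s)+i\sqrt{e^{2\pi\mu}-\sinh^2(2\pi\sigma_s)}\Bigr).
\]
Rationalizing the denominator shows that the second expression equals $\bigl[e^{-2\pi\sigma_s}(\sinh(2\pi\sigma_s)-i\sqrt{e^{2\pi\mu}-\sinh^2(2\pi\sigma_s)})\bigr]^{-1}$, matching the second dilogarithm argument in (\ref{Wonesaddleregime}). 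Finally $(\pi\nu)^2 = \pi^2(\mu^2+i\mu-1/4)$ contributes the polynomial $\mu^2/8+i\mu/8-1/32$ upon division by $8\pi^2$, and assembling all terms reproduces the right-hand side of (\ref{Wonesaddleregime}) under principal-branch substitution.

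The main obstacle is branch tracking. Definition \ref{defgeneratingfuncPIII3} specifies the principal branches only in the reference region $\{0<\eta<\sigma<1/4,\ \nu\in\mathbb R_{<0}\}$ and extends $W$ to $(i\sigma_s,-\mu-i/2)$ by analytic continuation; I must verify that this continuation produces precisely the principal-branch values used above. I would exhibit a continuous three-leg path from an interior point of the reference region to $(i\sigma_s,-\mu-i/2)$: first slide $(\sigma,\nu)$ within the reference region so that $\nu$ reaches $-\mu$ and $\sigma$ reaches a small positive real value; second, rotate $\sigma$ through the upper half-plane to $i\sigma_s$; third, translate $\nu$ from $-\mu$ down to $-\mu-i/2$. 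Using the explicit formulas derived above and the inequality $e^{2\pi\mu}>\sinh^2(2\pi\sigma_s)$, one then checks leg-by-leg that (i) the argument of $\arcsin$ stays in $\mathbb C\setminus\bigl((-\infty,-1]\cup[1,+\infty)\bigr)$, (ii) each dilogarithm argument stays in $\mathbb C\setminus[1,+\infty)$, and (iii) the argument of $\ln$ stays in $\mathbb C\setminus(-\infty,0]$. Once these avoidances are verified, the principal-branch substitution performed above is justified and (\ref{Wonesaddleregime}) follows.
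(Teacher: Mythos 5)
Your approach is essentially the same as the paper's: evaluate (\ref{defW}) by direct substitution at $(\sigma,\nu)=(i\sigma_s,-\mu-i/2)$ using principal branches, then justify these branches by analytic continuation from the reference region. The algebraic manipulations in your first three paragraphs are correct (I verified that $e^{2i\pi\eta}=ie^{-\pi\mu}(\sinh(2\pi\sigma_s)-i\sqrt{e^{2\pi\mu}-\sinh^2(2\pi\sigma_s)})$, that the two exponents match the stated dilogarithm arguments after rationalizing, and that $(\pi\nu)^2/8\pi^2$ yields the polynomial terms).

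However, your branch-tracking plan in the last paragraph has a gap. Leg 1 requires sliding $\nu$ to $-\mu$ \emph{within the reference region}, which needs $\nu=-\mu\in\R_{<0}$, i.e.\ $\mu>0$. But the hypothesis $e^{2\pi\mu}>\sinh^2(2\pi\sigma_s)$ admits $\mu\leq 0$ whenever $|\sigma_s|$ is small enough (e.g.\ $\mu=0$, $|\sigma_s|<\tfrac{1}{2\pi}\arcsinh 1$), and for such points leg 1 as described cannot be performed. Beyond this, legs 2 and 3 are only sketched: the crux of the proof is precisely the quantitative verification that the arcsin argument and the two $\Li_2$ arguments avoid their branch cuts all along the path, and that step is deferred with ``one then checks.'' The paper handles both issues at once by a two-stage argument: it carries out the continuation (parametrized by $\alpha,\beta\in[0,1]$) only at the single point $(\sigma_s,\mu)=(1/5,1)$, where the branch-cut avoidance reduces to explicit numerical estimates showing the relevant arguments stay inside the unit disk, and then it extends (\ref{Wonesaddleregime}) to all of $\{(\sigma_s,\mu)\in\R^2: e^{2\pi\mu}>\sinh^2(2\pi\sigma_s)\}$ by noting that both sides are analytic on this connected set (the $\Li_2$ arguments never meet $[1,+\infty)$ and the logarithm argument never meets $(-\infty,0]$ there) and agree at the reference point. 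Adopting this one-point-plus-identity-theorem structure would close your gap and make the branch verification tractable.
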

\begin{proof}
Define $\eta_{\alpha, \beta}(\sigma, \nu)$ by
\begin{align}\label{etaalphabetadef}
\eta_{\alpha, \beta}(\sigma, \nu) = \frac{\arcsin\big(e^{\pi (\nu-\beta \frac{i}{2})}\sin(2\pi e^{\frac{\pi i \alpha}{2}}\sigma)\big)}{2\pi}.
\end{align}
Note that $\eta_{0,0}(\sigma, \nu) = \eta(\sigma, \nu)$, where $\eta(\sigma, \nu)$
is the function in (\ref{etasigmanudef}).
The function $\arcsin{z} = -i \ln(\sqrt{1-z^2}+i z)$ is an analytic function of $z \in \C \setminus ((-\infty, -1] \cup [1, +\infty))$ with branch cuts along $(-\infty, -1]$ and $[1, +\infty)$. 
For $\alpha = \beta = 0$, $\sigma = 1/5$, and $\nu = -1$, we have $0<\eta_{\alpha,\beta}(\sigma, \nu) \approx  0.0065<\sigma<\frac{1}{4}$. Consequently, for $\sigma = 1/5$, $\nu = -1$, and all small enough $\alpha \geq 0$ and $\beta \geq 0$, we have by Definition \ref{defgeneratingfuncPIII3} that
\begin{align}\nonumber
8\pi^2 W\bigg(e^{\frac{\pi i \alpha}{2}}\sigma,\nu-\beta \frac{i}{2}\bigg) 
= &\; \Li_2\bigg( - e^{2i\pi(e^{\frac{\pi i \alpha}{2}}\sigma+\eta_{\alpha,\beta}(\sigma, \nu)-\frac{i (\nu-\beta \frac{i}{2})}2)}\bigg)  
	\\\nonumber
& + \Li_2\bigg( - e^{-2i\pi(e^{\frac{\pi i \alpha}{2}}\sigma+\eta_{\alpha,\beta}(\sigma, \nu)+\frac{i (\nu-\beta \frac{i}{2})}2)}\bigg) 
	\\ \label{Walphabeta}
& - (2\pi \eta_{\alpha,\beta}(\sigma, \nu))^2 + \pi^2 \bigg(\nu-\beta \frac{i}{2}\bigg)^2,
\end{align}
where principal branches are used. Let us analytically continue this formula to $\alpha = \beta = 1$.
As $\alpha$ increases from $0$ to $1$, $\im \eta_{\alpha,0}(1/5, -1)$ increases from $0$ to $\frac{1}{2 \pi }\arcsinh\left(e^{-\pi } \sinh \left(\frac{2 \pi }{5}\right)\right) \approx 0.011$. As we then increase $\beta$ from $0$ to $1$, $\arg \eta_{1,\beta}(1/5, -1)$ decreases back down to $0$. In particular, as we increase $\alpha$ from $0$ to $1$, and then $\beta$ from $0$ to $1$, $|\im \eta_{\alpha,\beta}(1/5, -1)|$ remains bounded by say $1/4$, so the arguments of the two dilogarithms in (\ref{Walphabeta}) do not cross the branch cut $[1, +\infty)$ of the dilogarithm because they stay inside the open unit disk:
\begin{align*}
& \bigg|- e^{\pm 2i\pi(e^{\frac{\pi i \alpha}{2}}\sigma+\eta_{\alpha,\beta}(\sigma, \nu) \mp \frac{i (\nu-\beta \frac{i}{2})}2)}\bigg|
= e^{\mp 2\pi \sigma \sin(\frac{\pi\alpha}{2}) \mp 2\pi \im \eta_{\alpha,\beta}(\sigma, \nu) + \pi \nu}
\leq 
 e^{\frac{2\pi}{5} + \frac{\pi}{2} - \pi} < 1
\end{align*}
for $\sigma = 1/5$ and $\nu = -1$.
Similarly, the argument $e^{\pi (\nu-\beta \frac{i}{2})}\sin(2\pi e^{\frac{\pi i \alpha}{2}}\sigma)$ of $\arcsin$ in (\ref{etaalphabetadef}) does not cross the branch cuts $(-\infty, -1]$ and $[1, +\infty)$, because it remains inside the open unit disk. This means that (\ref{Walphabeta}) is valid with principal branches also for $\alpha = \beta = 1$.
Thus, for $\sigma = 1/5$ and $\nu = -1$, we have
\begin{align*}
8\pi^2 W\bigg(i\sigma,\nu- \frac{i}{2}\bigg) 
= &\; \Li_2\bigg( - e^{2i\pi(i\sigma+\eta_{1,1}(\sigma, \nu) -\frac{i (\nu- \frac{i}{2})}2)}\bigg)  + \Li_2\bigg( - e^{-2i\pi(i\sigma+\eta_{1,1}(\sigma, \nu)+\frac{i (\nu- \frac{i}{2})}2)}\bigg) 
	\\
& - (2\pi \eta_{1,1}(\sigma, \nu))^2 + \pi^2 \bigg(\nu-\frac{i}{2}\bigg)^2,
\end{align*}
where the principal branch is used for the dilogarithms and for
$$\eta_{1,1}(\sigma, \nu) = \frac{\arcsin\left(e^{\pi  \nu } \sinh (2 \pi  \sigma )\right)}{2 \pi }
= \frac{1}{2\pi i}\ln\bigg(ie^{\pi \nu}\Big( \sinh(2\pi \sigma) - i\sqrt{e^{-2\pi \nu} - \sinh^2(2\pi \sigma)}\Big)\bigg).$$
Letting $\sigma_s = \sigma$ and $\mu = -\nu$ and simplifying, we find that (\ref{Wonesaddleregime}) holds for $\sigma_s = 1/5$ and $\mu = 1$.

Suppose that $(\sigma_s,\mu) \in \R^2$ satisfy $e^{2 \pi  \mu} > \sinh^2(2 \pi \sigma_s)$.
Then all square roots in (\ref{Wonesaddleregime}) are $> 0$. Moreover, as long as $e^{2 \pi  \mu} - \sinh^2(2 \pi \sigma_s) > 0$ and $\sigma_s, \mu \in \R$, the arguments of the dilogarithms in (\ref{Wonesaddleregime}) do not cross the branch cut $[1, +\infty)$, and the argument of $\ln^2$ in (\ref{Wonesaddleregime}) does not cross the branch cut $(-\infty,0]$. It follows that the formula (\ref{Wonesaddleregime}) is valid with principal branches for any $(\sigma_s,\mu) \in \R^2$ such that $e^{2 \pi  \mu} > \sinh^2(2 \pi \sigma_s)$.
\end{proof}

Let $(\sigma_s,\mu) \in \R^2$ be such that $e^{2 \pi  \mu} > \sinh^2(2 \pi \sigma_s)$.
By (\ref{Qt0def}), we have $z_- := e^{2\pi t_0} = -\cosh (2 \pi \sigma_s) - i \sqrt{e^{2 \pi  \mu} - \sinh^2(2 \pi \sigma_s)}$. Thus, letting $s := e^{2\pi \sigma_s} > 0$, we can write (\ref{Wonesaddleregime}) in terms of $z_-$ and $s$ as
\begin{align}\nonumber
W\bigg(i \sigma_s, - \mu- \frac{i}{2}\bigg) 
= &\; \frac{\Li_2\big(\frac{1}{s z_-+1}\big)}{8\pi ^2}
   +\frac{\Li_2\big(\frac{1}{\frac{z_-}{s}+1}\big)} {8 \pi ^2}
   + \frac{\mu ^2}{8}+\frac{i \mu}{8} -\frac{1}{32}
   	\\
&   +\frac{\ln^2\left(i e^{-\pi  \mu } (z_- + s)\right)}{8 \pi ^2},
\end{align}
Applying the identity (see \cite[Eq. 25.12.4]{NIST})
$$\Li_2(z) + \Li_2(z^{-1}) = -\frac{\pi^2}{6} - \frac{(\ln(-z))^2}{2}, \qquad z \in \C \setminus [1, +\infty),$$
with $z = s z_- + 1$ and $z = \frac{z_-}{s} + 1$, we deduce that
\begin{align*}\nonumber
W\bigg(i\sigma_s, - \mu- \frac{i}{2}\bigg) 
= &  -\frac{\Li_2(s z_-+1)}{8 \pi ^2} 
   -\frac{\Li_2\big(\frac{z_-}{s} + 1\big)}{8 \pi^2}
   +  \frac{\mu ^2}{8}+\frac{i \mu}{8}
   +\frac{\ln^2\left(i e^{-\pi  \mu } (s+z_-)\right)}{8 \pi ^2}
   	\\
   & 
   -\frac{\ln^2(-s z_- -1)}{16 \pi ^2}
   -\frac{\ln^2\left(-\frac{z_-}{s}-1\right)}{16 \pi ^2}
   -\frac{7}{96}.
\end{align*}
 Using the identity (see \cite[Eq. 25.12.6]{NIST})
\begin{equation} \label{Li2oneminusz}
\Li_2(z) + \Li_2(1 - z) = \frac{\pi^2}{6} - \ln(z) \ln(1-z)
\end{equation}
with $z = -z_- s$ and $z = -\frac{z_-}{s}$, we obtain
\begin{align*}\nonumber
W\bigg(i\sigma_s, - \mu- \frac{i}{2}\bigg) 
= &\; \frac{\Li_2(-s z_-)}{8 \pi ^2}
+ \frac{\Li_2(-\frac{z_-}{s})}{8 \pi^2} 
+ \frac{\mu ^2}{8}+\frac{i \mu }{8}
+\frac{\ln^2\left(i e^{-\pi  \mu } (s+z_-)\right)}{8 \pi^2} 
	\\
& -\frac{\ln^2(-s z_- -1)}{16 \pi ^2}
   -\frac{\ln^2\left(-\frac{z_-}{s}-1\right)}{16 \pi ^2}
   +\frac{\ln (-s z_-) \ln (s z_-+1)}{8 \pi ^2}
   	\\
&  +\frac{\ln (-\frac{z_-}{s}) \ln (\frac{z_-}{s} +1)}{8 \pi ^2}
   -\frac{11}{96}.
 \end{align*}
 Now $z_-$ lies in the third quadrant and $s > 0$. Thus, $\frac{z_-}{s} + 1$ and $z_- s + 1$ lie in the lower half-plane, so
\begin{align*}
& \ln\left(i e^{-\pi \mu } (s + z_{-})\right)
= \frac{\pi i}{2} - \pi \mu +  \ln(s) + \ln\left(\frac{z_{-}}{s}+1\right),
	\\
& \ln\left(\frac{z_{-}}{s} + 1\right) = \ln\left(-\frac{z_{-}}{s} - 1\right) - \pi i, \qquad
\ln(s z_{-}  + 1) = \ln(-s z_{-} - 1) - \pi i,
\end{align*}
Furthermore, 
$$\ln(-s z_- - 1) = 2\pi \mu - \ln(- \frac{z_-}{s} - 1) + \pi i,$$
because $(z_- s +1)(\frac{z_-}{s}+1) = -e^{2 \pi  \mu}$.
Using these identities, we obtain
\begin{align*}\nonumber
4\pi i W\bigg(i\sigma_s, - \mu- \frac{i}{2}\bigg) 
= &\;
-\frac{\Li_2(-s z_-)}{2 \pi i}
-\frac{\Li_2\left(-\frac{z_-}{s}\right)}{2 \pi i}
- \frac{\ln^2(s)}{2 \pi i}
 +\bigg(\frac{1}{2}+i \mu \bigg)\ln (-z_-)
-\frac{i \pi }{3}.
\end{align*}
Comparing with (\ref{finQ}), we arrive at (\ref{fWrelationexpected}). This concludes the proof of \eqref{fWrelationexpected} and of Theorem \ref{Qth}.

\appendix

\section{Semiclassical limit of the difference equations} \label{appendix}
It was shown in \cite{LR24} that the functions $\mathcal M$ and $\mathcal Q$ satisfy two pairs of difference equations. In this appendix, we explain how the differential equations \eqref{diffeqfzeta} and \eqref{diffeqfomega} of Proposition \ref{prop:diffeqforf} formally arise from the difference equations satisfied by the function $\mathcal M$. This is the approach we originally followed to guess a relation between the semiclassical limit of $\mathcal M$ and the Painlev\'e I generating function. 
The case of the function $\mathcal Q$ is similar.

We do not attempt to make the arguments in this appendix rigorous; the presentation serves merely as motivation for the constructions.

Define two difference operator $D_\mathcal{M}(b,\zeta)$ and $\tilde{D}_\mathcal{M}(b,\omega)$ by
\begin{align} \label{HM}
& D_\mathcal{M}(b,\zeta) = e^{2\pi b \zeta} \lb 1 - e^{ib\partial_\zeta} \rb, \\
& \tilde{D}_\mathcal{M}(b,\omega) = e^{2\pi b \omega} - 2 e^{\pi b (\omega-\frac{ib}2)} \cosh(\pi b(\tfrac{ib}2+\omega)) e^{ib \partial_\omega}.
\end{align}
For $\im \omega<(b+b^{-1})/2$ and $\im(\zeta+\omega)>0$, the function $\mathcal{M}$ satisfies \cite{LR24}
\begin{subequations}\label{diffeqM}\begin{align}
\label{diffeqM1} & D_\mathcal{M}(b,\zeta) \mathcal{M}(b,\zeta,\omega) = e^{-2\pi b \omega} \mathcal{M}(b,\zeta,\omega), \\
\label{diffeqM2} & D_\mathcal{M}(b^{-1},\zeta) \mathcal{M}(b,\zeta,\omega) = e^{-2\pi b^{-1} \omega} \mathcal{M}(b,\zeta,\omega),
\end{align}\end{subequations}
while for $\im(\omega+ib^{\pm 1}) < (b+b^{-1})/2$ and $\im(\zeta+\omega)>0$ it satisfies 
\begin{subequations}\label{diffeqtildeM}\begin{align}
\label{diffeqtildeM1} & \tilde{D}_\mathcal{M}(b,\omega)\mathcal{M}(b,\zeta,\omega) = e^{-2\pi b \zeta} \mathcal{M}(b,\zeta,\omega), \\
\label{diffeqtildeM2} & \tilde{D}_\mathcal{M}(b^{-1},\omega)\mathcal{M}(b,\zeta,\omega) = e^{2\pi b^{-1} \zeta} \mathcal{M}(b,\zeta,\omega).
\end{align}\end{subequations}

Suppose now that $\mathcal{M}(b,\zeta,\omega)$ admits semiclassical asymptotics of the form
\begin{align}
\label{Fsc} & \mathcal{M}\lb b,\frac{\zeta}b,\frac{\omega}b\rb = g_\mathcal{M}(\zeta,\omega) e^{\frac{f_\mathcal{M}(\zeta,\omega)}{b^2}}(1+O(b)) \qquad \text{as} \; b \to 0^+. 
\end{align}
Assuming that we can apply the operators $e^{ib^2\partial_\zeta}$ and $e^{ib^2\partial_\omega}$ to the above equation without increasing the error term, we arrive at
\begin{align}
\label{Fzetasc} & e^{ib^2\partial_\zeta} \mathcal{M}\lb b,\frac{\zeta}b,\frac{\omega}b\rb = \left[e^{ib^2 \partial_\zeta}\lb  g_\mathcal{M}(\zeta,\omega) e^{\frac{f_\mathcal{M}(\zeta,\omega)}{b^2}}\rb\right] (1+O(b)) \qquad \text{as} \; b \to 0^+, \\
\label{Fomegasc} & e^{ib^2\partial_\omega} \mathcal{M}\lb b,\frac{\zeta}b,\frac{\omega}b\rb = \left[e^{ib^2 \partial_\omega}\lb  g_\mathcal{M}(\zeta,\omega) e^{\frac{f_\mathcal{M}(\zeta,\omega)}{b^2}}\rb\right] (1+O(b)) \qquad \text{as} \; b \to 0^+.
\end{align}
For $x\in \{\zeta,\omega\}$, we have
$$e^{i b^2 \partial_x} \left( g_\mathcal{M}(\zeta,\omega) e^{\frac{f_\mathcal{M}(\zeta,\omega)}{b^2}} \right) = \sum_{k=0}^\infty \frac{(i b^2 \partial_x)^k}{k!} \left[g_\mathcal{M}(\zeta,\omega) e^{\frac{f_\mathcal{M}(\zeta,\omega)}{b^2}} \right].$$
Keeping only the dominant contributions from the derivatives, we infer that
\begin{align} \label{derivsc}
e^{i b^2 \partial_x}  \mathcal{M}\lb b,\frac{\zeta}b,\frac{\omega}b\rb = \lb e^{i \partial_x f_\mathcal{M}(\zeta,\omega)}\rb  g_\mathcal{M}(\zeta,\omega) e^{\frac{f_\mathcal{M}(\zeta,\omega)}{b^2}} \lb 1 + O(b) \rb \qquad \text{as} \; b \to 0^+.
\end{align}
The semiclassical limit of the difference equations \eqref{diffeqM1} and \eqref{diffeqtildeM1} can now readily be computed. More precisely, as $b\to 0^+$ we obtain 
\begin{align}
\label{expipartialzetaf} & e^{i \partial_\zeta f_\mathcal{M}(\zeta,\omega)} = - e^{-2\pi(\zeta+\omega)} \lb 1 - e^{2\pi(\zeta+\omega)}\rb, \\
\label{expipartialomegaf} & e^{i\partial_\omega f_\mathcal{M}(\zeta,\omega)} = -e^{-2\pi \zeta} \frac{(1-e^{2\pi(\zeta+\omega)})}{1+e^{2\pi \omega}}.
\end{align}
Equations \eqref{expipartialzetaf} and \eqref{expipartialomegaf} are equivalent to the exponentials of \eqref{diffeqfzeta} and \eqref{diffeqfomega}, respectively. 

Let us finally mention that the case of $\mathcal Q$ is similar but slightly more complicated. The reason is that the function $\mathcal Q(b,\sigma_s,\mu)$ satisfies a pair of difference equations in the variable $\mu$ where the difference operators are second order (see \cite[Equation (7.30)]{LR24}). Therefore, the formal semiclassical limit of these equations formally leads to a second order algebraic equation of the form $\alpha X^2 + \beta X + \gamma = 0$ where $X=e^{i\partial_\mu f_{\mathcal Q}}$.

\end{document}